\documentclass[smallextended]{svjour3}

\usepackage{graphicx} 
\usepackage{subfigure}
\usepackage{amsmath}

\usepackage{amssymb}
\usepackage{color}
\usepackage{breqn}


\usepackage[ruled,vlined]{algorithm2e}

\usepackage{graphicx}
\usepackage{url}
\smartqed 

\usepackage{mathtools}

\usepackage[normalem]{ulem} 

\newcommand{\bm}[1]{\boldsymbol{#1}}


\newcommand{\vb}{{\mathbf{b}}}

\newcommand{\vd}{{\mathbf{d}}}

\newcommand{\vj}{{\mathbf{j}}}

\newcommand{\vu}{{\mathbf{u}}}
\newcommand{\vv}{{\mathbf{v}}}

\newcommand{\vx}{{\mathbf{x}}}
\newcommand{\vy}{{\mathbf{y}}}

\newcommand{\vA}{{\mathbf{A}}}

\newcommand{\vX}{{\mathbf{X}}}
\newcommand{\vY}{{\mathbf{Y}}}
\newcommand{\vZ}{{\mathbf{Z}}}


\newcommand{\cK}{{\mathcal{K}}}

\newcommand{\cP}{{\mathcal{P}}}


\newcommand{\EE}{\mathbb{E}} 
\newcommand{\RR}{\mathbb{R}} 
\newcommand{\vzero}{\mathbf{0}} 

\newcommand{\Prob}{{\mathrm{Prob}}} 
\newcommand{\prox}{{\mathbf{prox}}} 




\newcommand{\st}{\mbox{ s.t. }}


\DeclareMathOperator*{\argmin}{arg\,min} 

\DeclareMathOperator*{\Min}{minimize}


\newcommand{\bc}{\begin{center}}
\newcommand{\ec}{\end{center}}

\newcommand{\bdm}{\begin{displaymath}}
\newcommand{\edm}{\end{displaymath}}

\newcommand{\beq}{\begin{equation}}
\newcommand{\eeq}{\end{equation}}

\newcommand{\bfl}{\begin{flushleft}}
\newcommand{\efl}{\end{flushleft}}

\newcommand{\bt}{\begin{tabbing}}
\newcommand{\et}{\end{tabbing}}

\newcommand{\beqn}{\begin{eqnarray}}
\newcommand{\eeqn}{\end{eqnarray}}

\newcommand{\beqs}{\begin{align*}} 
\newcommand{\eeqs}{\end{align*}}  


\newtheorem{assumption}{Assumption}

\newcommand{\hvx}{{\hat{\vx}}}






\begin{document}

\title{On the Convergence of Asynchronous Parallel Iteration with {Unbounded Delays}}

\author{Zhimin Peng \and
Yangyang Xu \and\\
Ming Yan \and
Wotao Yin
}

\institute{
Z. Peng and W. Yin \at Department of Mathematics, University of California, Los Angeles, CA 90095\\
\email{zhiminp@gmail.com / wotaoyin@math.ucla.edu}
\and 
Yangyang Xu \at Department of Mathematical Sciences, Rensselaer Polytechnic Institute, Troy, NY 12180\\
\email{xuy21@rpi.edu}
\and
Ming Yan \at Department of Computational Mathematics, Science and Engineering,
                         Department of Mathematics,
       Michigan State University,
       East Lansing, MI 48824\\
\email{yanm@math.msu.edu}
}




\maketitle

\begin{abstract}
Recent years have witnessed the surge of asynchronous parallel (async-parallel) iterative algorithms due to problems involving very large-scale data and a large number of decision variables. Because of asynchrony, the iterates are computed with outdated information, and the age of the outdated information, which we call \emph{delay}, is the number of times it has been updated since its creation. Almost all recent works prove convergence under the assumption of a finite maximum delay and set their stepsize parameters accordingly. However, the maximum delay is practically unknown.

This paper presents convergence analysis of an async-parallel method from a probabilistic viewpoint, and it allows for large {unbounded delays}. An explicit formula of stepsize that guarantees convergence is given depending on delays' statistics. With $p+1$ identical processors, we empirically measured that delays closely follow the Poisson distribution with parameter $p$, matching our theoretical model, and thus the stepsize can be set accordingly. Simulations on both convex and nonconvex optimization problems demonstrate the validness of our analysis and also show that the existing maximum-delay induced stepsize is too conservative, often slowing down the convergence of the algorithm.
\end{abstract}

\keywords{asynchronous {unbounded delays}, nonconvex, convex}

\section{Introduction}
In the ``big data'' era, the size of the dataset and the number of decision variables involved in many areas such as health care, the Internet, economics, and engineering are becoming tremendously large~\cite{WH2014big}. It motivates the development of new computational approaches by efficiently utilizing modern multi-core computers or computing clusters.

In this paper, we consider the block-structured optimization problem
\begin{equation}\label{eq:main}
\Min_{\vx\in\RR^n} F(\vx)\equiv f(\vx_1,\ldots,\vx_m)+\sum_{i=1}^m r_i(\vx_i),
\end{equation}
where $\vx=(\vx_1,\ldots,\vx_m)$ is partitioned into $m$ disjoint blocks, $f$ has a Lipschitz continuous gradient (possibly nonconvex), and $r_i$'s are (possibly nondifferentiable) proper closed convex functions. Note that $r_i$'s can be extended-valued, and thus~\eqref{eq:main} can have block constraints $\vx_i\in X_i$ by incorporating the indicator function of $X_i$ in $r_i$ for all $i$.

Many applications can be formulated in the form of~\eqref{eq:main}, and they include classic machine learning problems: support vector machine (squared hinge loss and its dual formulation)~\cite{cortes1995-SVM}, LASSO~\cite{tibshirani1996-Lasso}, and logistic regression (linear or multilinear)~\cite{zhou2013tensor-reg}, and also subspace learning problems: sparse principal component analysis~\cite{zou2006sparse-PCA}, nonnegative matrix or tensor factorization~\cite{cichocki2009NMF-NTF}, just to name a few.

Toward solutions for these problems with extremely large-scale datasets and many variables, first-order methods and also stochastic methods become particularly popular because of their scalability to the problem size, such as FISTA~\cite{beck2009FISTA}, stochastic approximation~\cite{nemirovski2009robust}, randomized coordinate descent~\cite{nesterov2012RCD}, and their combinations~\cite{DangLan-SBMD,XuYin2015_block}. Recently, lots of efforts have been made to the parallelization of these methods, and in particular, asynchronous parallel (async-parallel) methods attract more attention (e.g.,~\cite{liu2014asynchronous,Peng_2015_AROCK}) over their synchronous counterparts partly due to the better speed-up performance.

This paper focuses on the async-parallel block coordinate update (async-BCU) method (see Algorithm \ref{alg:asyn_bcd}) for solving~\eqref{eq:main}. To the best of our knowledge, all works on async-BCU before 2013 consider a deterministic selection of blocks with an exception to~\cite{Strikwerda2002125}, and thus they require strong conditions (like a contraction) for convergence. Recent works, e.g.,~\cite{liu2014asynchronous,liu2015async-scd,Peng_2015_AROCK,hannah2016unbounded}, employ randomized block selection and significantly weaken the convergence requirement. However, all of them require bounded delays and/or are restricted to convex problems. The work~\cite{hannah2016unbounded} allows {unbounded delays} but requires convexity, and~\cite{davis2016asynchronous,cannelli2016asynchronous} do not assume convexity but require bounded delays. We consider {unbounded delays} and deal with nonconvex problems.

\subsection{Algorithm}
We describe the async-BCU method as follows. Assume there are $p+1$ processors, and the data and variable $\vx$ are accessible to all processors. We let all processors continuously and asynchronously update the variable $\vx$ in parallel. At each time $k$, one processor reads the variable $\vx$ as $\hat{\vx}^k$ from the global memory, randomly picks a block $i_k\in\{1,2,\cdots,m\}$, and renews $\vx_{i_k}$ by a prox-linear update while keeping all the other blocks unchanged. The pseudocode is summarized in Algorithm \ref{alg:asyn_bcd}, where the $\prox$ operator is defined in~\eqref{eq:prox}.

The algorithm first appeared in~\cite{liu2014asynchronous}, where the age of $\hat{\vx}^k$ relative to $\vx^k$,  {which we call the \emph{delay} of iteration $k$,} was assumed to be bounded by a certain integer $\tau$. For general convex problems, sublinear convergence was established, and for the strongly convex case, linear convergence was shown. However, its convergence for nonconvex problems and/or with {unbounded delays} was unknown. In addition, numerically, the stepsize is  {difficult to tune because it depends on $\tau$, which is unknown before the algorithm completes.}

\begin{algorithm}
\SetAlgoLined
\SetKwInOut{Input}{Input}\SetKwInOut{Output}{output}
\Input{Any point $\vx^0\in\RR^n$ in the global memory, maximum number of iterations $K$, stepsize $\eta>0$}
 \While{$k < K$, each and all processors asynchronously}{
  select $i_k$ from $[m]$ uniformly at random;\

  $\hat{\vx}^k\gets$ read $\vx$ from the global memory;\

  for all $i\in[m],$\
  \begin{equation}\label{main-update}
  \vx^{k+1}_i\gets\begin{cases}\prox_{\eta r_i}\left(\vx_i^k-\eta\nabla_i f(\hat{\vx}^k)\right),& \text{ if }i=i_k,\\
\vx_i^k,&\text{ otherwise};
  \end{cases}
  \end{equation}
  increase the global counter $k \leftarrow k+1$;\
 }
 \caption{Async-parallel block coordinate update}
 \label{alg:asyn_bcd}
\end{algorithm}

\subsection{Contributions} We summarize our contributions as follows.
\begin{itemize}
\item We analyze the convergence of Algorithm \ref{alg:asyn_bcd} and allow for large {unbounded delays} following a certain distribution. We require the delays to have  certain bounded expected quantities (e.g., expected delay, variance of delay). Our results  {are more general than} those requiring bounded delays such as~\cite{liu2014asynchronous,liu2015async-scd}.
\item Both nonconvex and convex problems are analyzed, and those problems include both smooth and nonsmooth functions. For nonconvex problems, we establish the global convergence in terms of first-order optimality conditions and show that any limit point of the iterates is a critical point almost surely. It appears to be the first result of an async-BCU method for general nonconvex problems and allowing {unbounded delays}. For weakly convex problems, we establish a sublinear convergence result, and for strongly convex problems, we show the linear convergence.
\item We show that if all $p+1$ processors run at the same speed, the delay follows the Poisson distribution with parameter $p$. In this case, all the relevant expected quantities  can be explicitly computed and are bounded. By setting appropriate stepsizes, we can reach a near-linear speedup if $p=o(\sqrt{m})$ for smooth cases and $p=o(\sqrt[4]{m})$ for nonsmooth cases.
\item When the delay follows the Poisson distribution, we can explicitly set the stepsize based on the delay expectation (which equals $p$). We simulate the async-BCU method on one convex problem: LASSO, and one nonconvex problem: the nonnegative matrix factorization. The results demonstrate that async-BCU performs consistently better with a stepsize set based on the expected delay than on the maximum delay. The number of processors is known while the maximum delay is not. Hence, the setting based on expected delay is practically more useful.
\end{itemize}
 {Our algorithm updates one (block) coordinate of $\vx$ in each step and  is sharply different from stochastic gradient methods that sample one function in each step to update all coordinates of $\vx$. While there are async-parallel algorithms in either classes and how to handle delays is important to both of their convergence, their basic lines of analysis are different with respect to how to absorb the delay-induced errors. The results of the two classes are in general not comparable. That said, for problems with certain proper structures, it is possible to apply both coordinate-wise update and stochastic sampling (e.g.,~\cite{recht2011hogwild,XuYin2015_block,MokhtaiKoppelRibeiro2016_class,davis2016asynchronous}),  and our results apply to the coordinate part.}

\subsection{Notation and assumptions}
Throughout the paper, bold lowercase letters $\vx, \vy,\ldots,$ are used for vectors. We denote $\vx_i$ as the $i$-th block of $\vx$ and $U_i$ as the $i$-th sampling matrix, i.e., $U_i \vx$ is a vector with $\vx_i$ as its $i$-th block and $\vzero$ for the remaining ones. $\EE_{i_k}$ denotes the expectation with respect to $i_k$ conditionally on all previous history, and $[m]=\{1,\ldots,m\}$.

We consider the Euclidean norm denoted by $\|\cdot\|$, but all our results can be directly extended to problems with general primal and dual norms in a Hilbert space.

The projection to a convex set $X$ is defined as
$$\textstyle \cP_X(\vy)=\argmin\limits_{\vx\in X} \|\vx-\vy\|^2,$$
and the proximal mapping of a convex function $h$ is defined as
\begin{equation}
\label{eq:prox}\textstyle\prox_h(\vy)=\argmin\limits_\vx h(\vx)+\tfrac{1}{2}\|\vx-\vy\|^2.
\end{equation}
\begin{definition}\label{def-critical}\textbf{\emph{(Critical point)}}
A point $\vx^*$ is a critical point of~\eqref{eq:main} if
$\vzero\in\nabla f(\vx^*)+\partial R(\vx^*),$
where $\partial R(\vx)$ denotes the subdifferential of $R$ at $\vx$ and
\begin{equation}\label{def-R}
R(\vx)=\sum_{i=1}^m r_i(\vx_i).
\end{equation}
\end{definition}

Throughout our analysis, we make the following three assumptions to problem~\eqref{eq:main} and Algorithm \ref{alg:asyn_bcd}. Other assumed conditions will be specified if needed.
\begin{assumption}\label{assump-sol}
The function $F$ is lower bounded. The problem~\eqref{eq:main} has at least one solution, and the solution set is denoted as $X^*$.
\end{assumption}

\begin{assumption}\label{assump-fun}
$\nabla f(\vx)$ is Lipschitz continuous with constant $L_f$, namely,
\begin{equation}\label{lip-f}
\|\nabla f(\vx)-\nabla f(\vy)\|\le L_f\|\vx-\vy\|,\,\forall \vx,~\vy.
\end{equation}
In addition, for each $i\in[m]$, fixing all block coordinates but the $i$-th one, $\nabla f(\vx)$  and $\nabla_i f(\vx)$ are Lipschitz continuous about $\vx_i$ with  $L_r$ and $L_c$, respectively, i.e., for any $\vx,~\vy$, and $i$,
\begin{align}
&\|\nabla f(\vx)-\nabla f(\vx+ U_i \vy)\|\le L_r\|\vy_i\|,\cr
&\|\nabla_i f(\vx)-\nabla_i f(\vx+ U_i \vy)\|\le L_c\|\vy_i\|.\label{lip-cd}
\end{align}
\end{assumption}
From~\eqref{lip-cd}, we have that for any $\vx,~\vy$, and $i$,
\begin{equation}\label{lip-ineq}
f(\vx+U_i\vy)\le f(\vx)+\langle\nabla_i f(\vx), \vy_i\rangle+\tfrac{L_c}{2}\|\vy_i\|^2.
\end{equation}
We denote $\kappa=\frac{L_r}{L_c}$ as the condition number.
\begin{assumption}\label{assump-read}
For each $k\ge 1$, the reading $\hat{\vx}^k$ is consistent and delayed by $j_k$, namely, $\hat{\vx}^k=\vx^{k-j_k}$. The delay $j_k$ follows an identical distribution as a random variable $\vj$
\begin{equation}\label{prob-delay}
\Prob(\vj=t)=q_t,\, t=0,1,2\ldots,
\end{equation}
and is independent of $i_k$. We let $$\textstyle c_k := \sum_{t=k}^\infty q_t,\qquad T:=\EE[\vj],\qquad S:=\EE[\vj^2].$$
\end{assumption}

\begin{remark}
Although the delay always satisfies $0\le j_k\le k$, the assumption in~\eqref{prob-delay} is without loss of generality if we make negative iterates and regard $\vx^k=\vx^0,\,\forall k<0$. For simplicity, we make the identical distribution assumption, which is the same as that in \emph{\cite{Strikwerda2002125}}. Our results can still hold for non-identical distribution; see the analysis for the smooth nonconvex case in the arXiv version of the paper.
\end{remark}

\section{Related works}
We briefly review block coordinate update (BCU) and async-parallel computing methods. 

The BCU method is closely related to the Gauss-Seidel method for solving linear equations, which can date back to 1823. In the literature of optimization, BCU method first appeared in~\cite{Hildreth-57} as the block coordinate descent method, or more precisely, block minimization (BM), for quadratic programming. The convergence of BM was established early for both convex and nonconvex problems, for example~\cite{luo1992convergence,Grippo-Sciandrone-00,Tseng-01}. However, in general, its convergence rate result was only shown for strongly convex problems (e.g.,~\cite{luo1992convergence}) until the recent work~\cite{hong2015iteration} that shows sublinear convergence for weakly convex cases.~\cite{tseng2009_CGD} proposed a new version of BCU methods, called coordinate gradient descent method, which mimics proximal gradient descent but only updates a block coordinate every time. The block coordinate gradient or block prox-linear update (BPU) becomes popular since~\cite{nesterov2012RCD} proposed to randomly select a block to update. The convergence rate of the randomized BPU is easier to show than the deterministic BPU. It was firstly established for convex smooth problems (both unconstrained and constrained) in~\cite{nesterov2012RCD} and then generalized to nonsmooth cases in~\cite{richtarik2014iteration,Lu_Xiao_rbcd_2015}. Recently,~\cite{DangLan-SBMD,XuYin2015_block} incorporated stochastic approximation into the BPU framework to deal with stochastic programming, and both established sublinear convergence for convex problems and also global convergence for nonconvex problems.

The async-parallel computing method (also called \emph{chaotic relaxation}) first appeared in~\cite{rosenfeld1969case} to solve linear equations arising in electrical network problems.~\cite{D-W1969chaotic-relax} first systematically analyzed (more general) asynchronous iterative
methods for solving linear systems. Assuming bounded delays, it gave a necessary and sufficient condition for convergence.~\cite{bertsekas1983distributed} proposed an asynchronous distributed iterative method for solving more general fixed-point problems and showed its convergence under a contraction assumption.~\cite{TB1990partially} weakened the contraction assumption to pseudo-nonexpansiveness but made more other assumptions.~\cite{FS2000asyn-review} made a thorough review of asynchronous methods before 2000. It summarized convergence results under nested sets and synchronous convergence conditions, which are satisfied by  P-contraction mappings and isotone mappings.

Since it was proposed in 1969, the async-parallel method has not attracted much attention until recent years when the size of data is increasing exponentially in many areas. Motivated by ``big data'' problems,~\cite{liu2014asynchronous,liu2015async-scd} proposed the async-parallel stochastic coordinate descent method (i.e., Algorithm \ref{alg:asyn_bcd}) for solving problems in the form of~\eqref{eq:main}. Their analysis focuses on convex problems and assumes bounded delays. Specifically, they established sublinear convergence for weakly convex problems and linear convergence for strongly convex problems. In addition, near-linear speed up was achieved if $\tau=o(\sqrt{m})$ for unconstrained smooth convex problems and $\tau=o(\sqrt[4]{m})$ for constrained smooth or nonsmooth cases. For nonconvex problems,~\cite{davis2016asynchronous} introduced an async-parallel coordinate descent method, whose convergence was established under iterate boundedness assumptions and appropriate stepsizes.

\section{Convergence results for the smooth case} Throughout this section, let $r_i=0,\,\forall i$, i.e., we consider the smooth optimization problem
\begin{equation}\label{eq:main-sm}
\textstyle\Min\limits_{\vx\in\RR^n} f(\vx).
\end{equation}
The general (possibly nonsmooth) case will be analyzed in the next section. The results for nonsmooth problems of course also hold for smooth ones. However, the smooth case requires weaker conditions for convergence than those required by the nonsmooth case, and their analysis techniques are different. Hence, we consider the two cases separately.

\subsection{Convergence for the nonconvex case} In this subsection, we establish a subsequence convergence result for the general (possibly nonconvex) case. We begin with some technical lemmas. The first lemma deals with certain infinite sums that will appear later in our analysis. 
\begin{lemma}\label{lem:thm-cvg-lem1}
For any $k$ and $t\le k$, let
\begin{subequations}\label{def-para}
\begin{align}
&\textstyle\gamma_k=\frac{\eta^2 L_r}{2m\sqrt{m}}\sum_{d=1}^{k-1} (c_{k-d}-c_k) c_d+\frac{\eta}{2m}c_k+\frac{\eta^2 L_c}{2m}c_k, \label{def-para_a}\\
&\textstyle\beta_k=\left(\frac{\eta}{m}-\frac{\eta^2 L_c}{2m}\right)q_0-\frac{\eta}{2m}c_k \textnormal{ for }k\geq 1, ~~\left(\textnormal{and }\beta_0=0\right),\\
&\textstyle C_{t,k}=\left(\frac{\eta}{m}-\frac{\eta^2 L_c}{2m}\right)q_t-\frac{\eta^2 L_r}{2m\sqrt{m}}\left(tq_t+\sum_{d=1}^{t}(c_d-c_{k})q_{t-d}\right).
\end{align}
\end{subequations}
Then
\begin{align}
&\textstyle\sum_{k=0}^\infty \gamma_k\le {\tfrac{\eta^2L_r}{2m\sqrt{m}}}T^2+\left(\frac{\eta}{2m}+\frac{\eta^2 L_c}{2m}\right)(1+T),\label{eq:bd-gammak}\\
&\textstyle\beta_k+\sum_{t=k+1}^\infty C_{t-k,t}\ge \frac{\eta}{2m}-\frac{\eta^2 L_c}{2m}-\frac{\eta^2 L_rT}{m\sqrt{m}},\,\forall k.\label{eq:bd-beta-Ck}
\end{align}
\end{lemma}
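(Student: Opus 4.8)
The plan is to treat both bounds as purely arithmetic identities about the tail sequence $c_k=\sum_{t\ge k}q_t$. First I would record three elementary facts that I will use repeatedly: $c_0=1$ and $\sum_{k\ge1}c_k=\sum_{k\ge1}\sum_{t\ge k}q_t=\sum_{t\ge1}tq_t=T$, so $\sum_{k\ge0}c_k=1+T$; every double sum that appears below has nonnegative terms, so its summation order may be interchanged and it may be reindexed freely; and $\sum_{k\ge1}c_k^2\le\big(\sum_{k\ge1}c_k\big)^2=T^2$ because the $c_k$ are nonnegative.

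For \eqref{eq:bd-gammak}, I would split $\sum_{k\ge0}\gamma_k$ into the three summands in \eqref{def-para_a}. The two ``$c_k$'' summands give $\big(\tfrac{\eta}{2m}+\tfrac{\eta^2L_c}{2m}\big)\sum_{k\ge0}c_k=\big(\tfrac{\eta}{2m}+\tfrac{\eta^2L_c}{2m}\big)(1+T)$, which is exactly the second term on the right-hand side. In the remaining double sum $\sum_k\sum_{d=1}^{k-1}(c_{k-d}-c_k)c_d$ I would substitute $e=k-d$, so that it runs over all pairs $d,e\ge1$ and equals $\sum_{d\ge1}c_d\sum_{e\ge1}(c_e-c_{d+e})=\sum_{d\ge1}c_d\sum_{e=1}^{d}c_e$, the inner equality being a telescoping (the tail $\sum_{e\ge1}c_{d+e}=\sum_{f\ge d+1}c_f$ cancels). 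Writing this triangular sum as $A=\sum_{1\le e\le d}c_dc_e$ and adding to it its mirror image $\sum_{1\le d\le e}c_dc_e$ gives $2A=\big(\sum_{d\ge1}c_d\big)^2+\sum_{d\ge1}c_d^2\le2T^2$, hence $A\le T^2$; multiplying by $\tfrac{\eta^2L_r}{2m\sqrt m}$ produces the $T^2$ term.

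For \eqref{eq:bd-beta-Ck}, I would fix $k$, substitute $s=t-k$ so that $\sum_{t\ge k+1}C_{t-k,t}=\sum_{s\ge1}C_{s,s+k}$, and add $\beta_k$. Grouping the terms that carry the factor $\tfrac{\eta}{m}-\tfrac{\eta^2L_c}{2m}$ and using $q_0+\sum_{s\ge1}q_s=1$ collapses them to the constant $\tfrac{\eta}{m}-\tfrac{\eta^2L_c}{2m}$, while $-\tfrac{\eta}{2m}c_k\ge-\tfrac{\eta}{2m}$ since $c_k\le1$. For the $L_r$-part I would bound $\sum_{s\ge1}\big(sq_s+\sum_{d=1}^s(c_d-c_{s+k})q_{s-d}\big)$ from above by discarding the nonpositive $-c_{s+k}$, then using $\sum_{s\ge1}sq_s=T$ and $\sum_{s\ge1}\sum_{d=1}^sc_dq_{s-d}=\big(\sum_{d\ge1}c_d\big)\big(\sum_{u\ge0}q_u\big)=T$, for a total of $2T$; so the $L_r$-part is $\ge-\tfrac{\eta^2L_rT}{m\sqrt m}$. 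Summing the three contributions gives exactly $\tfrac{\eta}{2m}-\tfrac{\eta^2L_c}{2m}-\tfrac{\eta^2L_rT}{m\sqrt m}$. I would also check the boundary index $k=0$ (where $\beta_0=0$) directly using $c_0=1$.

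I expect the only real friction to be the index bookkeeping in the reindexings — pinning down the exact ranges of the swapped summation variables and justifying the interchanges via nonnegativity — and the one slightly non-obvious move is the mirror-image symmetrization that turns the triangular sum into $\tfrac12\big(T^2+\sum c_d^2\big)$. Everything else is routine once the three preliminary facts are available; there is no analytic obstacle.
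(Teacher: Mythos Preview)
Your argument is correct and closely parallels the paper's. For \eqref{eq:bd-gammak} the paper takes a shorter path: it simply drops the nonpositive $-c_kc_d$ terms and reads off $\sum_{k}\sum_{d=1}^{k-1}c_{k-d}c_d=\big(\sum_{d\ge1}c_d\big)^2=T^2$ directly, avoiding your telescoping-plus-symmetrization detour (which is correct but does more work for the same bound). For \eqref{eq:bd-beta-Ck} your computation is essentially the paper's, line for line.

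One genuine caution on the boundary: the $k=0$ case with $\beta_0=0$ does \emph{not} go through, and ``using $c_0=1$'' will not rescue it. The calculation for $k\ge1$ relies on the $\big(\tfrac{\eta}{m}-\tfrac{\eta^2L_c}{2m}\big)q_0$ contribution coming from $\beta_k$, which is absent when $\beta_0=0$; for instance, if $q_0=1$ then $T=0$ and $\sum_{t\ge1}C_{t,t}=0$, whereas the claimed lower bound is $\tfrac{\eta}{2m}(1-\eta L_c)>0$. The paper's own derivation substitutes the $k\ge1$ formula for $\beta_k$ and never treats $k=0$, and the lemma is only invoked for $k\ge1$ in Theorem~\ref{thm:cvg}, so read the ``$\forall k$'' as $k\ge1$ and drop your planned separate check.
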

\begin{proof} To bound $\sum_{k=0}^\infty \gamma_k$, we bound the first term $\sum_{d=1}^{k-1} (c_{k-d}-c_k) c_d$ in~\eqref{def-para_a}.
Specifically,
\begin{align*}
\sum_{k=0}^\infty\sum_{d=1}^{k-1} (c_{k-d}-c_k) c_d\leq  \sum_{k=0}^\infty\sum_{d=1}^{k-1} c_{k-d} c_d= \sum_{d=1}^\infty\sum_{k=d+1}^{\infty} c_{k-d} c_d=T^2,
\end{align*}
where the last equality holds since
$T:=\EE[\vj]=\sum_{t=1}^\infty t q_t=\sum_{t=1}^\infty \sum_{d=1}^t q_t=\sum_{d=1}^\infty \sum_{t=d}^\infty q_t=\sum_{d=1}^\infty c_d.
$
We obtain~\eqref{eq:bd-gammak} by combining these two equations.

To prove~\eqref{eq:bd-beta-Ck}, we will use
\begin{equation}\label{ineq-cd}
\textstyle \sum\limits_{t=1}^\infty\sum\limits_{d=1}^{t}(c_d-c_{k+t})q_{t-d}\le\sum\limits_{t=1}^\infty\sum\limits_{d=1}^{t}c_d q_{t-d}=\sum\limits_{d=1}^\infty\sum\limits_{t=d}^\infty c_d q_{t-d}=\sum\limits_{d=1}^\infty c_d=T.
\end{equation}
The above inequality yields
\begin{align*}
 &\textstyle \beta_k+\sum_{t=k+1}^\infty C_{t-k,t}=\beta_k+\sum_{t=1}^\infty C_{t,t+k}\\
=  &  \textstyle \big(\frac{\eta}{m}-\frac{\eta^2 L_c}{2m}\big)q_0-\frac{\eta }{2m}c_k\\
   & \textstyle +\sum_{t=1}^\infty\Big(\big(\frac{\eta}{m}-\frac{\eta^2 L_c}{2m}\big)q_t-\frac{\eta^2 L_r}{2m\sqrt{m}}tq_t-\frac{\eta^2 L_r}{2m\sqrt{m}}\sum_{d=1}^{t}(c_d-c_{k+t})q_{t-d}\Big)\\
\stackrel{(\ref{ineq-cd})}\ge & \textstyle  \frac{\eta}{m}-\frac{\eta^2 L_c}{2m}-\frac{\eta }{2m}c_k-\frac{\eta^2 L_rT}{m\sqrt{m}}\ge \frac{\eta}{2m}-\frac{\eta^2 L_c}{2m}-\frac{\eta^2 L_rT}{m\sqrt{m}},
\end{align*}
where the last inequality follows from $c_k\le 1$.
\qed
\end{proof}
The second lemma below bounds the cross term that appears in our analysis.
\begin{lemma}[Cross term bound]\label{lem:thm-cvg-lem2}
For any $k > 1$ and $t\le k$, it holds that
{\begin{align}\label{eq:thm-cvg-t1-eq}
    &\textstyle \sum_{t=1}^{k-1}q_t\EE \Big[-\langle \nabla f(\vx^k)-\nabla f(\vx^{k-t}), \nabla f(\vx^{k-t})\rangle\Big]\\
\le &\textstyle \frac{\eta L_r}{2\sqrt{m}}\sum_{t=1}^{k-1}\left(tq_t+\sum\limits_{d=1}^t (c_{d}-c_k)q_{t-d}\right)\EE \|\nabla f(\vx^{k-t})\|^2\cr
    &+\textstyle \frac{\eta L_r}{2\sqrt{m}}\sum_{d=1}^{k-1} (c_{k-d}-c_k) c_d\|\nabla f(\vx^0)\|^2.\nonumber
\end{align}}
\end{lemma}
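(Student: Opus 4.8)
The plan is to expand $\vx^k-\vx^{k-t}$ along the update path, bound the inner product via the blockwise Lipschitz constant $L_r$, pass to expectations, and then reorganize a triple sum into the stated coefficients. In the smooth case \eqref{main-update} reads $\vx^{s+1}-\vx^s=-\eta\,U_{i_s}\nabla_{i_s}f(\hat{\vx}^s)$, so each step changes only block $i_s$; telescoping and applying the first estimate in \eqref{lip-cd} to every one-step change,
\[
\|\nabla f(\vx^k)-\nabla f(\vx^{k-t})\|\ \le\ \sum_{s=k-t}^{k-1}\|\nabla f(\vx^{s+1})-\nabla f(\vx^s)\|\ \le\ \eta L_r\sum_{s=k-t}^{k-1}\|\nabla_{i_s}f(\hat{\vx}^s)\| .
\]
By Cauchy--Schwarz, $-\langle\nabla f(\vx^k)-\nabla f(\vx^{k-t}),\nabla f(\vx^{k-t})\rangle\le\|\nabla f(\vx^k)-\nabla f(\vx^{k-t})\|\,\|\nabla f(\vx^{k-t})\|$, and Young's inequality $ab\le\tfrac{\sqrt m}{2}a^2+\tfrac1{2\sqrt m}b^2$ applied to each product $\|\nabla_{i_s}f(\hat{\vx}^s)\|\cdot\|\nabla f(\vx^{k-t})\|$ gives
\[
-\langle\nabla f(\vx^k)-\nabla f(\vx^{k-t}),\nabla f(\vx^{k-t})\rangle\ \le\ \tfrac{\eta L_r\sqrt m}{2}\sum_{s=k-t}^{k-1}\|\nabla_{i_s}f(\hat{\vx}^s)\|^2+\tfrac{\eta L_r t}{2\sqrt m}\|\nabla f(\vx^{k-t})\|^2 .
\]

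Next I would multiply by $q_t$, sum over $t=1,\dots,k-1$, and take expectations. Since $i_s$ is drawn uniformly from $[m]$ independently of $\hat{\vx}^s=\vx^{s-j_s}$, one has $\EE\|\nabla_{i_s}f(\hat{\vx}^s)\|^2=\tfrac1m\EE\|\nabla f(\vx^{s-j_s})\|^2$, and expanding over $j_s$ (using $\vx^\ell=\vx^0$ for $\ell\le0$) gives $\EE\|\nabla f(\vx^{s-j_s})\|^2=\sum_{v=0}^{s-1}q_v\EE\|\nabla f(\vx^{s-v})\|^2+c_s\|\nabla f(\vx^0)\|^2$. Hence the left side of \eqref{eq:thm-cvg-t1-eq} is at most
\[
\frac{\eta L_r}{2\sqrt m}\sum_{t=1}^{k-1}tq_t\,\EE\|\nabla f(\vx^{k-t})\|^2+\frac{\eta L_r}{2\sqrt m}\sum_{t=1}^{k-1}q_t\sum_{s=k-t}^{k-1}\Big(\sum_{v=0}^{s-1}q_v\EE\|\nabla f(\vx^{s-v})\|^2+c_s\|\nabla f(\vx^0)\|^2\Big).
\]
The first sum is exactly the $tq_t$ contribution in the claim, so it remains to show the triple sum equals $\frac{\eta L_r}{2\sqrt m}\big[\sum_{t=1}^{k-1}\big(\sum_{d=1}^t(c_d-c_k)q_{t-d}\big)\EE\|\nabla f(\vx^{k-t})\|^2+\big(\sum_{d=1}^{k-1}(c_{k-d}-c_k)c_d\big)\|\nabla f(\vx^0)\|^2\big]$.

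I would do this by matching coefficients (the identity I expect here is exact, so no slack is lost). In the triple sum, $\EE\|\nabla f(\vx^{k-t})\|^2$ arises from the inner summand with $s-v=k-t$; interchanging the order of summation over the outer index and $s$, substituting $u=s-(k-t)$ and then $d=t-u$, and using $\sum_{t'=a}^{k-1}q_{t'}=c_a-c_k$, the coefficient of $\EE\|\nabla f(\vx^{k-t})\|^2$ collapses to $\sum_{d=1}^t(c_d-c_k)q_{t-d}$. The term $\|\nabla f(\vx^0)\|^2$ comes only from the $c_s\|\nabla f(\vx^0)\|^2$ pieces (elsewhere $s-v\ge1$); the same manipulation gives $\sum_{s=1}^{k-1}c_s(c_{k-s}-c_k)=\sum_{d=1}^{k-1}(c_d-c_k)c_{k-d}$, which equals $\sum_{d=1}^{k-1}(c_{k-d}-c_k)c_d$ after relabeling $d\mapsto k-d$. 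Combining the pieces yields \eqref{eq:thm-cvg-t1-eq}.

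\emph{Main obstacle.} The only genuine inequality is the Cauchy--Schwarz/Young step; everything after it is an identity, so the real work --- and the easiest place to slip on a summation range or a sign --- is the bookkeeping in the last step: correctly interchanging the triple sum $\sum_t\sum_s\sum_v$, isolating the boundary index $s-v=0$ that produces the $\vx^0$ contribution, and carrying out the two changes of variables so the expression collapses onto $tq_t+\sum_{d=1}^t(c_d-c_k)q_{t-d}$ and $\sum_{d=1}^{k-1}(c_{k-d}-c_k)c_d$. A secondary point is justifying $\EE\|\nabla_{i_s}f(\hat{\vx}^s)\|^2=\tfrac1m\EE\|\nabla f(\vx^{s-j_s})\|^2$ and the expansion over $j_s$, which rest on $i_s$ (and $j_s$) being independent of the randomness that generated $\vx^{s-j_s}$.
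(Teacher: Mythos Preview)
Your proposal is correct and follows essentially the same approach as the paper: telescope $\nabla f(\vx^k)-\nabla f(\vx^{k-t})$ along the update path, bound each one-step change by $\eta L_r\|\nabla_{i_s}f(\hat\vx^s)\|$ via \eqref{lip-cd}, apply Cauchy--Schwarz and then Young's inequality with weights $\sqrt m$ and $1/\sqrt m$, take expectation over $i_s$ and $j_s$ to get the $\tfrac1m$ factor and the $c_s$ boundary term, and finally reindex the triple sum exactly as the paper does (its display \eqref{eq:gradr} is precisely your coefficient computation). The only differences are notational ($s,v$ versus the paper's $d,r$).
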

\begin{proof}
Define $\Delta^d:=\nabla f(\vx^d) - \nabla f(\vx^{d+1})$. Applying the Cauchy-Schwarz inequality with $\nabla f(\vx^{k-t}) - \nabla f(\vx^{k}) = \sum_{d=k-t}^{k-1}\Delta^d$ yields
$$\textstyle -\langle \nabla f(\vx^k)-\nabla f(\vx^{k-t}), \nabla f(\vx^{k-t})\rangle\le \sum_{d=k-t}^{k-1} \|\Delta^d\|\cdot \|\nabla f(\vx^{k-t})\|.$$
Since $\|\Delta^d\|\le L_r\|\vx^{d+1}-\vx^d\| = \eta L_r\|\nabla_{i_d}f(\hvx^{d})\|,$ by applying Young's inequality, we get
\begin{align}\label{eq:lem:thm-cvg-lem2-eq1}
    &-\langle \nabla f(\vx^k)-\nabla f(\vx^{k-t}), \nabla f(\vx^{k-t})\rangle \cr
\le &\textstyle \frac{\eta L_r}{2}\sum_{d=k-t}^{k-1}\Big(\sqrt{m}\|\nabla_{i_d}f(\hvx^{d})\|^2+\frac{1}{\sqrt{m}}\|\nabla f(\vx^{k-t})\|^2\Big).
\end{align}
By taking expectation, we have
\begin{align*}
\EE_{i_d, j_d} \|\nabla_{i_d}f(\hvx^{d})\|^2 
=  &\textstyle \frac{1}{m}\EE_{j_d} \|\nabla f(\vx^{d-j_d})\|^2\\
=  &\textstyle \frac{1}{m}\Big(\sum_{r=0}^{d-1} q_r\|\nabla f(\vx^{d-r})\|^2+c_d\|\nabla f(\vx^0)\|^2\Big).
\end{align*}
Now taking expectation on both sides of~\eqref{eq:lem:thm-cvg-lem2-eq1} and using the above equation, we get
\begin{align}\label{eq:cross1}
    &\EE [-\langle \nabla f(\vx^k)-\nabla f(\vx^{k-t}), \nabla f(\vx^{k-t})\rangle]\cr
\le &\textstyle  \frac{\eta L_r}{2\sqrt{m}} \sum_{d=k-t}^{k-1}\Big(\sum_{r=0}^{d-1} q_r\EE \|\nabla f(\vx^{d-r})\|^2+c_d\|\nabla f(\vx^0)\|^2\Big)\cr
    &+\textstyle  \frac{\eta L_r}{2\sqrt{m}} \sum_{d=k-t}^{k-1}t\EE\|\nabla f(\vx^{k-t})\|^2.
\end{align}
Finally,~\eqref{eq:thm-cvg-t1-eq} follows from
\begin{align*}
\textstyle \sum_{t=1}^{k-1} q_t\sum_{d=k-t}^{k-1}c_d\|\nabla f(\vx^0)\|^2\overset{(\ref{seq1})}
=  &\textstyle \sum_{d=1}^{k-1}\Big(\sum_{t=k-d}^{k-1} q_t\Big) c_d\|\nabla f(\vx^0)\|^2\\
=  &\textstyle \sum_{d=1}^{k-1} (c_{k-d}-c_k) c_d\|\nabla f(\vx^0)\|^2,
\end{align*}
and
\begin{align}\label{eq:gradr}
   &\textstyle \sum\limits_{t=1}^{k-1} q_t\sum\limits_{d=k-t}^{k-1}\sum\limits_{r=0}^{d-1} q_r\EE \|\nabla f(\vx^{d-r})\|^2\cr
=  &\textstyle \sum\limits_{d=1}^{k-1} (c_{k-d}-c_k)\sum\limits_{r=0}^{d-1} q_r\EE \|\nabla f(\vx^{d-r})\|^2\cr
[\text{let }r\leftarrow d-r]~=&\textstyle \sum\limits_{d=1}^{k-1} (c_{k-d}-c_k)\sum\limits_{r=1}^{d} q_{d-r}\EE \|\nabla f(\vx^{r})\|^2\cr
\overset{(\ref{seq2})}=&\textstyle \sum\limits_{r=1}^{k-1}\left(\sum\limits_{d=r}^{k-1}(c_{k-d}-c_k)q_{d-r}\right)\EE \|\nabla f(\vx^{r})\|^2\cr
[\text{let }t\leftarrow k-r,\ d\leftarrow k-d]~=&\textstyle \sum\limits_{t=1}^{k-1}\left(\sum\limits_{d=1}^t (c_{d}-c_k)q_{t-d}\right)\EE \|\nabla f(\vx^{k-t})\|^2.
\end{align}
\qed
\end{proof}
Using the above lemma, we show a result of running one iteration of the algorithm.
\begin{theorem}[Fundamental bound]\label{lem:thm-cvg-lem3}
Set $\gamma_k, \beta_k$ and $C_{t,k}$ as in~\eqref{def-para}. For any $k>1$, we have
\begin{align}
\textstyle \EE f(\vx^{k+1}) \le &  \EE f(\vx^k) + \gamma_k\|\nabla f(\vx^0)\|^2-\beta_k\EE\|\nabla f(\vx^k)\|^2\cr
 &\textstyle -\sum_{t=1}^{k-1} C_{t,k} \EE\|\nabla f(\vx^{k-t})\|^2.\label{eq:fx2}
\end{align}
\end{theorem}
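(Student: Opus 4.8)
The plan is to carry out the usual one–step descent estimate for a single block prox–linear update — which here is just a block gradient step since $r_i=0$ — and then take expectations in two stages, first over the block index $i_k$ and then over the delay $j_k$, using Lemma~\ref{lem:thm-cvg-lem2} to absorb the delay-induced cross term.

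First I would apply the block-Lipschitz inequality~\eqref{lip-ineq} to the update~\eqref{main-update}. Only block $i_k$ moves, and $\vx^{k+1}_{i_k}-\vx^k_{i_k}=-\eta\nabla_{i_k}f(\hvx^k)$, so
\[
f(\vx^{k+1})\le f(\vx^k)-\eta\langle\nabla_{i_k}f(\vx^k),\nabla_{i_k}f(\hvx^k)\rangle+\tfrac{\eta^2 L_c}{2}\|\nabla_{i_k}f(\hvx^k)\|^2 .
\]
Taking $\EE_{i_k}$, and using that $i_k$ is uniform on $[m]$ and independent of $\hvx^k$, replaces the two $i_k$-indexed quantities by $\tfrac1m\langle\nabla f(\vx^k),\nabla f(\hvx^k)\rangle$ and $\tfrac1m\|\nabla f(\hvx^k)\|^2$. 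Conditioning on the history through step $k$ and then taking $\EE_{j_k}$, I would write $\hvx^k=\vx^{k-j_k}$ and use the convention $\vx^\ell=\vx^0$ for $\ell\le0$, so that $\EE_{j_k}g(\vx^{k-j_k})=\sum_{t=0}^{k-1}q_t\,g(\vx^{k-t})+c_k\,g(\vx^0)$; applying this with $g(\cdot)=\|\nabla f(\cdot)\|^2$ and with $g(\cdot)=\langle\nabla f(\vx^k),\nabla f(\cdot)\rangle$ and then taking full expectation yields an inequality for $\EE f(\vx^{k+1})$ involving $\EE\|\nabla f(\vx^{k-t})\|^2$, $\|\nabla f(\vx^0)\|^2$, and the cross terms $\EE\langle\nabla f(\vx^k)-\nabla f(\vx^{k-t}),\nabla f(\vx^{k-t})\rangle$.

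To deal with the cross terms I would split $-\langle\nabla f(\vx^k),\nabla f(\vx^{k-t})\rangle=-\|\nabla f(\vx^{k-t})\|^2-\langle\nabla f(\vx^k)-\nabla f(\vx^{k-t}),\nabla f(\vx^{k-t})\rangle$. The weighted sum of the second pieces over $t=1,\dots,k-1$ is exactly the quantity bounded in Lemma~\ref{lem:thm-cvg-lem2}, which contributes the $\tfrac{\eta^2 L_r}{2m\sqrt m}\big(tq_t+\sum_{d=1}^t(c_d-c_k)q_{t-d}\big)$ weights on $\EE\|\nabla f(\vx^{k-t})\|^2$ and the $\tfrac{\eta^2 L_r}{2m\sqrt m}\sum_{d}(c_{k-d}-c_k)c_d$ weight on $\|\nabla f(\vx^0)\|^2$. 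The one leftover is the ``over-reaching'' term weighted by $c_k$, namely $-\tfrac{\eta}{m}c_k\langle\nabla f(\vx^k)-\nabla f(\vx^0),\nabla f(\vx^0)\rangle$; I would expand it as $\tfrac{\eta}{m}c_k\|\nabla f(\vx^0)\|^2-\tfrac{\eta}{m}c_k\langle\nabla f(\vx^k),\nabla f(\vx^0)\rangle$ and use Young's inequality $-\langle\nabla f(\vx^k),\nabla f(\vx^0)\rangle\le\tfrac12\|\nabla f(\vx^k)\|^2+\tfrac12\|\nabla f(\vx^0)\|^2$, bounding it by $\tfrac{\eta}{2m}c_k\EE\|\nabla f(\vx^k)\|^2+\tfrac{3\eta}{2m}c_k\|\nabla f(\vx^0)\|^2$.

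It then remains to collect, for each of $\|\nabla f(\vx^0)\|^2$, $\EE\|\nabla f(\vx^k)\|^2$ (the $t=0$ term), and $\EE\|\nabla f(\vx^{k-t})\|^2$ for $1\le t\le k-1$, the contributions of the linear term ($-\tfrac{\eta}{m}q_t$, resp.\ $-\tfrac{\eta}{m}c_k$), the quadratic term ($\tfrac{\eta^2 L_c}{2m}q_t$, resp.\ $\tfrac{\eta^2 L_c}{2m}c_k$), the Lemma~\ref{lem:thm-cvg-lem2} bound, and — for the $\vx^0$ and $\vx^k$ terms only — the Young step; a direct check shows these sums equal $\gamma_k$, $-\beta_k$, and $-C_{t,k}$ respectively, which is~\eqref{eq:fx2}. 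The only genuinely delicate point is isolating the part of the delay that reaches back to (or past) $\vx^0$: this term must be peeled from the main sum before Lemma~\ref{lem:thm-cvg-lem2} is applied, and the extra Young step on it is precisely what produces the $\tfrac{\eta}{2m}c_k$ corrections inside $\gamma_k$ and $\beta_k$; everything else is routine coefficient bookkeeping once Lemma~\ref{lem:thm-cvg-lem2} is invoked.
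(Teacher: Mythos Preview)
Your proposal is correct and follows essentially the same route as the paper: apply~\eqref{lip-ineq} to the block update, take $\EE_{i_k}$ then $\EE_{j_k}$, split the cross terms via $\langle\nabla f(\vx^k),\nabla f(\vx^{k-t})\rangle=\|\nabla f(\vx^{k-t})\|^2+\langle\nabla f(\vx^k)-\nabla f(\vx^{k-t}),\nabla f(\vx^{k-t})\rangle$, invoke Lemma~\ref{lem:thm-cvg-lem2} on the $q_t$-weighted part, and use Young's inequality on the $c_k$-weighted part. Your treatment of the $c_k$ term takes a small detour --- you split it like the $q_t$ terms and then immediately undo the split to apply Young --- whereas the paper applies Young's inequality directly to $-\tfrac{\eta}{m}c_k\langle\nabla f(\vx^k),\nabla f(\vx^0)\rangle$; the resulting coefficients are identical, so this is purely cosmetic.
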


\begin{proof}
Since $\vx^{k+1}=\vx^k-\eta U_{i_k}\nabla f(\vx^{k-j_k})$, we have from~\eqref{lip-ineq} that
$$f(\vx^{k+1})\le f(\vx^k)-\eta\langle \nabla f(\vx^k), U_{i_k}\nabla f(\vx^{k-j_k})\rangle+\tfrac{L_c}{2}\|\eta U_{i_k}\nabla f(\vx^{k-j_k})\|^2.$$
Taking conditional expectation on $(i_k, j_k)$ gives
\begin{align}\label{eq:thm-cvg-eq1}
    &\EE_{i_k,j_k}f(\vx^{k+1})\cr
\le &f(\vx^k)-\tfrac{\eta}{m}\EE_{j_k}\langle \nabla f(\vx^k), \nabla f(\vx^{k-j_k})\rangle+\tfrac{\eta^2 L_c}{2m}\EE_{j_k}\|\nabla f(\vx^{k-j_k})\|^2\cr
=   &\textstyle  f(\vx^k)-\tfrac{\eta}{m}\sum_{t=0}^{k-1} q_t\langle \nabla f(\vx^k), \nabla f(\vx^{k-t})\rangle-\tfrac{\eta}{m}c_k\langle \nabla f(\vx^k), \nabla f(\vx^0)\rangle\cr
    &\textstyle  +\tfrac{\eta^2 L_c}{2m}\sum_{t=0}^{k-1} q_t\|\nabla f(\vx^{k-t})\|^2+\tfrac{\eta^2 L_c}{2m}c_k\|\nabla f(\vx^0)\|^2.
\end{align}
For the first cross term in~\eqref{eq:thm-cvg-eq1}, we write each summand as
\begin{equation}\label{eq:thm-cvg-eq2}
\langle \nabla f(\vx^k), \nabla f(\vx^{k-t})\rangle=\langle \nabla f(\vx^k)-\nabla f(\vx^{k-t}), \nabla f(\vx^{k-t})\rangle+\|\nabla f(\vx^{k-t})\|^2,
\end{equation}
and we use Young's inequality to bound the second cross term by
\begin{equation}\label{eq:thm-cvg-eq3}
-\tfrac{\eta}{m}c_k\langle \nabla f(\vx^k), \nabla f(\vx^0)\rangle \le \tfrac{\eta c_k}{2m}\Big[\|\nabla f(\vx^k)\|^2+\|\nabla f(\vx^0)\|^2\Big].
\end{equation}
Now taking expectation over both sides of~\eqref{eq:thm-cvg-eq1}, plugging in~\eqref{eq:thm-cvg-eq2} and~\eqref{eq:thm-cvg-eq3}, and using Lemma~\ref{lem:thm-cvg-lem2}, we have the desired result. 
\qed\end{proof}
We are now ready to show the main result in the following theorem.
\begin{theorem}\label{thm:cvg}\textbf{\emph{(Convergence for the nonconvex smooth case)}}
Under Assumptions \ref{assump-sol} through \ref{assump-read}, let $\{\vx^k\}_{k\ge 1}$ be generated from Algorithm \ref{alg:asyn_bcd}. Assume $T<\infty$.
Take the stepsize as
$0<\eta<\frac{1/L_c}{1+2\kappa T/\sqrt{m}}.$ If $q_0>0$ or $\nabla f(\vx)$ is bounded for all $\vx$,
then
\begin{equation}\label{eq:grad-0}
\lim_{k\to\infty} \EE\|\nabla f(\vx^k)\|= 0,
\end{equation}
and any limit point of $\{\vx^k\}_{k\ge 1}$ is almost surely a critical point of~\eqref{eq:main-sm}.
\end{theorem}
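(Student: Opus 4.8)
The plan is to convert the one‑step estimate of Theorem~\ref{lem:thm-cvg-lem3} into a finite bound on $\sum_{k}\EE\|\nabla f(\vx^k)\|^2$ and then read off both assertions. First I would sum \eqref{eq:fx2} over $k=2,\dots,N$: the $\EE f(\vx^k)$ terms telescope, and since $f$ is bounded below (Assumption~\ref{assump-sol}) the left side $\EE f(\vx^{N+1})$ stays above a fixed constant, while by \eqref{eq:bd-gammak} the total contribution of $\gamma_k\|\nabla f(\vx^0)\|^2$ is a constant independent of $N$ (this is where $T<\infty$ enters). Rearranging and using $\EE f(\vx^2)<\infty$, one obtains
\[
\textstyle\sum_{k=2}^{N}\Big(\beta_k\EE\|\nabla f(\vx^k)\|^2+\sum_{t=1}^{k-1}C_{t,k}\,\EE\|\nabla f(\vx^{k-t})\|^2\Big)\le B_0
\]
for a constant $B_0$ not depending on $N$. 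Reindexing the double sum by $r=k-t$ regroups the left side as $\sum_{r\ge 1}d_r^{(N)}\,\EE\|\nabla f(\vx^r)\|^2$, where (for $r\ge 2$) $d_r^{(N)}=\beta_r+\sum_{t=1}^{N-r}C_{t,r+t}$, and as $N\to\infty$ this tends to $\beta_r+\sum_{t=r+1}^{\infty}C_{t-r,t}$, which by \eqref{eq:bd-beta-Ck} is at least $\rho:=\tfrac{\eta}{2m}\big(1-\eta L_c(1+2\kappa T/\sqrt m)\big)$. The stepsize restriction $0<\eta<\tfrac{1/L_c}{1+2\kappa T/\sqrt m}$ is precisely what makes $\rho>0$.

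The delicate point — which I expect to be the main obstacle — is that for a finite truncation the coefficient $d_r^{(N)}$ is only a partial sum, and the individual $C_{t,k}$ (and $\beta_k$, when $q_0=0$) need not be nonnegative, so one cannot simply replace $d_r^{(N)}$ by $\rho$. Writing $d_r^{(N)}=\big(\beta_r+\sum_{t\ge 1}C_{t,r+t}\big)-\sum_{t>N-r}C_{t,r+t}$ and estimating the tail $\sum_{t>M}|C_{t,k}|$ by a quantity $g(M)\downarrow 0$ (uniformly in $k$, using $c_d-c_k\le c_d$ together with $\sum_t q_t\le 1$, $\sum_t tq_t=T$, and $\sum_t\sum_{d\le t}c_dq_{t-d}=T$ as in \eqref{ineq-cd}), one gets $d_r^{(N)}\ge\rho-g(N-r)\ge\rho/2$ for all $r\le N-M$ once $M$ is fixed with $g(M)\le\rho/2$. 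The finitely many remaining indices $r\in(N-M,N]$ are controlled via the extra hypothesis: when $\nabla f$ is bounded, their contribution $\sum_{N-M<r\le N}d_r^{(N)}\EE\|\nabla f(\vx^r)\|^2$ is bounded in absolute value by a constant (at most $M$ terms, each $|d_r^{(N)}|$ uniformly bounded and each $\EE\|\nabla f(\vx^r)\|^2$ at most the squared bound); when $q_0>0$ one additionally uses that $\beta_k\to(\tfrac{\eta}{m}-\tfrac{\eta^2L_c}{2m})q_0>0$, so the boundary coefficients are eventually bounded below. Either way, $\tfrac{\rho}{2}\sum_{r=1}^{N}\EE\|\nabla f(\vx^r)\|^2\le B_0+\text{const}$ uniformly in $N$, and letting $N\to\infty$ yields $\sum_{k=1}^{\infty}\EE\|\nabla f(\vx^k)\|^2<\infty$.

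From this, $\EE\|\nabla f(\vx^k)\|^2\to 0$, and by the Cauchy--Schwarz (Jensen) inequality $\EE\|\nabla f(\vx^k)\|\le\big(\EE\|\nabla f(\vx^k)\|^2\big)^{1/2}\to 0$, which is \eqref{eq:grad-0}. For the almost‑sure statement, Tonelli's theorem gives $\EE\big[\sum_k\|\nabla f(\vx^k)\|^2\big]=\sum_k\EE\|\nabla f(\vx^k)\|^2<\infty$, hence $\sum_k\|\nabla f(\vx^k)\|^2<\infty$ almost surely, so $\nabla f(\vx^k)\to\vzero$ almost surely; on that probability‑one event, if $\vx^{k_j}\to\bar\vx$ along any subsequence, then $\nabla f(\bar\vx)=\lim_j\nabla f(\vx^{k_j})=\vzero$ by continuity of $\nabla f$, i.e.\ $\bar\vx$ is a critical point of \eqref{eq:main-sm}.
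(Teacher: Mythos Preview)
Your proposal is correct and follows the same route as the paper: sum \eqref{eq:fx2}, telescope, regroup so that each $\EE\|\nabla f(\vx^r)\|^2$ carries the coefficient $\beta_r+\sum_t C_{t-r,t}$, and then use \eqref{eq:bd-beta-Ck} together with the stepsize restriction to obtain $\sum_k\EE\|\nabla f(\vx^k)\|^2<\infty$ --- you are in fact more explicit than the paper about the tail estimate $g(M)\downarrow 0$ and the treatment of the boundary indices. The only notable difference is the almost-sure step: the paper invokes Markov's inequality and passes to a sub-subsequence, whereas your Tonelli argument yields $\nabla f(\vx^k)\to\vzero$ almost surely along the full sequence directly, which is a minor but cleaner variant.
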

\begin{remark}
If $T=\EE[\vj]=o(\sqrt{m})$, then $\eta$ only weakly depends on the delay. The conditions $q_0>0$ or $\nabla f(\vx)$ being bounded can be dropped if $S=\EE[\vj^2]$ is bounded; see Theorem \ref{thm:convg-nsm}.
\end{remark}

\begin{proof}
Summing up~\eqref{eq:fx2} from $k=0$ through $K$ and using~\eqref{seq3}, we have
{\small\begin{align}\label{eq:fxKJ}
 \textstyle \EE f(\vx^{K+1})\le &\textstyle f(\vx^0)+\sum_{k=0}^K\gamma_k\|\nabla f(\vx^0)\|^2\cr
&\textstyle -\beta_K\EE\|\nabla f(\vx^K)\|^2-\sum_{k=1}^{K-1}\left(\beta_k+\sum_{t=k+1}^K C_{t-k,t}\right) \EE\|\nabla f(\vx^k)\|^2.
\end{align}
}Note that $\beta_K\to\big(\frac{\eta}{m}-\frac{\eta^2 L_c}{2m}\big)q_0$ as $K\to\infty$. 
If $q_0>0$ or $\|\nabla f(\vx)\|$ is bounded, by letting $K\to\infty$ in~\eqref{eq:fxKJ} and using the lower boundedness of $f$, we have from Lemma \ref{lem:thm-cvg-lem1} that
$$
\textstyle \sum_{k=1}^\infty \left(\frac{\eta}{2m}-\frac{\eta^2 L_c}{2m}-\frac{\eta^2 L_r T}{m\sqrt{m}}\right)\EE\|\nabla f(\vx^k)\|^2<\infty.$$
Since $\eta<\frac{1/L_c}{1+2\kappa T/\sqrt{m}}$, we have~\eqref{eq:grad-0} from the above inequality.

From the Markov inequality, it follows that $\|\nabla f(\vx^k)\|$ converges to \emph{zero} with probability one. Let $\bar{\vx}$ be a limit point of $\{\vx^k\}_{k\ge1}$, i.e., there is a subsequence $\{\vx^k\}_{k\in\cK}$ convergent to $\bar{\vx}$. Hence, $\|\nabla f(\vx^k)\|\to 0$ almost surely as $\cK\ni k\to\infty$. By~\cite[Theorem 3.4, p.212]{gut2006probability}, there is a subsubsequence $\{\vx^k\}_{k\in\cK'}$ such that $\|\nabla f(\vx^k)\|\to 0$ almost surely as $\cK'\ni k\to\infty$. This completes the proof.
\qed\end{proof}

\subsection{Convergence rate for the convex case} In this subsection, we assume the convexity of $f$ and establish convergence rate results of Algorithm \ref{alg:asyn_bcd} for solving~\eqref{eq:main-sm}. Besides Assumptions \ref{assump-sol} through \ref{assump-read}, we make an additional assumption to the delay as follows. It means the delay follows a sub-exponential distribution.

\begin{assumption}\label{assump2}
There is a constant $\sigma>1$ such that
\begin{equation}\label{eq:cond-sig}
M_\sigma := \EE[\sigma^{\vj}]<\infty.
\end{equation}
\end{assumption}
The condition in~\eqref{eq:cond-sig} is stronger than $T<\infty$, and both of them hold if the delay $j_k$ is uniformly bounded by some number $\tau$ or follows the Poisson distribution; see the discussions in Section~\ref{sec:poisson}. Using this additional assumption and choosing an appropriate stepsize, we are able to control the gradient of $f$ such that it changes not too fast.

\begin{lemma}\label{lem:bdgrad}
Under Assumptions \ref{assump-fun} through \ref{assump2},  for any $1<\rho\le\sigma$, if the stepsize satisfies
\begin{equation}\label{sm-eta-rate}
0<\eta\le\tfrac{(\rho-1)\sqrt{m}}{\rho L_r(1+M_\rho)},
\end{equation}
with $M_\rho$ defined in~\eqref{eq:cond-sig}, then for all $k$, it holds that
\begin{equation}\label{eq:bdgrad}
\EE\|\nabla f(\vx^k)\|^2\le \rho \EE\|\nabla f(\vx^{k+1})\|^2 
\quad\text{and}\quad
\EE\|\nabla f(\vx^{k+1})\|^2\le \rho\EE\|\nabla f(\vx^k)\|^2.
\end{equation}
\end{lemma}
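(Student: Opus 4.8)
The two inequalities in \eqref{eq:bdgrad} are symmetric in spirit — each says that consecutive expected squared gradients cannot change by more than a factor $\rho$ — so the plan is to prove both by the same mechanism, controlling the one-step change $\nabla f(\vx^{k+1})-\nabla f(\vx^k)$. The starting point is the update rule: since $\vx^{k+1}-\vx^k = -\eta U_{i_k}\nabla_{i_k} f(\hvx^k) = -\eta U_{i_k}\nabla_{i_k}f(\vx^{k-j_k})$, Assumption \ref{assump-fun} (specifically the $L_r$-Lipschitz bound on $\nabla f$ along a single block direction) gives $\|\nabla f(\vx^{k+1})-\nabla f(\vx^k)\| \le L_r\|\vx^{k+1}_{i_k}-\vx^k_{i_k}\| = \eta L_r\|\nabla_{i_k}f(\vx^{k-j_k})\|$. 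Taking conditional expectation over $i_k$ (uniform over $m$ blocks) and over $j_k$, and using $\EE_{i_k}\|\nabla_{i_k}f(\vx^{k-j_k})\|^2 = \tfrac1m\|\nabla f(\vx^{k-j_k})\|^2$, one gets $\EE\|\nabla f(\vx^{k+1})-\nabla f(\vx^k)\|^2 \le \tfrac{\eta^2L_r^2}{m}\EE\|\nabla f(\vx^{k-j_k})\|^2$. The delay $j_k$ reaches back into the history, so the right-hand side is a weighted sum $\tfrac{\eta^2 L_r^2}{m}\sum_{t\ge 0} q_t \EE\|\nabla f(\vx^{k-t})\|^2$ (with the convention $\vx^\ell=\vx^0$ for $\ell<0$).

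Next I would set up an induction. Define $a_k := \sqrt{\EE\|\nabla f(\vx^k)\|^2}$ and suppose, as the induction hypothesis, that $a_{\ell+1}\le \rho\, a_\ell$ and $a_\ell \le \rho\, a_{\ell+1}$ hold for all $\ell < k$; equivalently $a_{k-t}\le \rho^{t}\cdots$ — more usefully, $a_{k-t} \le \rho^{t} a_k$ is NOT quite what we want, rather from $a_{\ell}\le\rho a_{\ell+1}$ we get $a_{k-t}\le \rho^{t} a_k$ for $t\ge 0$ going backward... wait, I need to be careful: $a_{k-1}\le\rho a_k$, $a_{k-2}\le\rho a_{k-1}\le\rho^2 a_k$, etc., so indeed $a_{k-t}\le\rho^t a_k$. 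Then from the triangle inequality in $L^2$, $a_{k+1}\le a_k + \sqrt{\EE\|\nabla f(\vx^{k+1})-\nabla f(\vx^k)\|^2} \le a_k + \tfrac{\eta L_r}{\sqrt m}\sqrt{\sum_t q_t a_{k-t}^2} \le a_k + \tfrac{\eta L_r}{\sqrt m}\sqrt{\sum_t q_t \rho^{2t}} \,a_k = a_k\big(1 + \tfrac{\eta L_r}{\sqrt m}\sqrt{M_{\rho^2}}\big)$. To close the induction I need $1 + \tfrac{\eta L_r}{\sqrt m}\sqrt{M_{\rho^2}}\le\rho$, i.e. $\eta\le \tfrac{(\rho-1)\sqrt m}{L_r\sqrt{M_{\rho^2}}}$; and similarly $a_k\le a_{k+1} + (\text{same increment bounded by } a_k\cdot\tfrac{\eta L_r}{\sqrt m}\sqrt{M_{\rho^2}})$, which rearranges to $a_k(1-\tfrac{\eta L_r}{\sqrt m}\sqrt{M_{\rho^2}})\le a_{k+1}$, giving $a_k\le\rho a_{k+1}$ under the same stepsize condition provided $\rho\le\sigma$ so that $M_{\rho^2}$ is controlled. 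One then reconciles the slightly different-looking stepsize bound $\tfrac{(\rho-1)\sqrt m}{\rho L_r(1+M_\rho)}$ with mine: since $M_{\rho^2}=\EE[\rho^{2\vj}]$ and by Cauchy–Schwarz or Jensen-type estimates one can bound $\sqrt{M_{\rho^2}}$ — actually the cleaner route the authors likely take avoids $M_{\rho^2}$ entirely by bounding the increment using $\sum_t q_t\rho^t a_{k-t}\cdot(\text{something})$ via a one-sided argument, so that only $M_\rho=\EE[\rho^{\vj}]$ appears and the factor $\rho$ in the denominator comes from a $\rho^{t}\le\rho^{t}$ bookkeeping step combined with the "$+1$" being the $t=0$ term. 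I would mirror whichever normalization makes $M_\rho$ (not $M_{\rho^2}$) the relevant quantity, since that is what \eqref{eq:cond-sig} and \eqref{sm-eta-rate} reference.

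The base case $k=0$ (or the first few $k$) is immediate because $\vx^\ell=\vx^0$ for $\ell\le 0$, so the history sum degenerates. The main obstacle I anticipate is purely bookkeeping: getting the geometric-in-$\rho$ weighting of the backward history to combine with the delay weights $q_t$ so that exactly $M_\rho=\EE[\rho^{\vj}]$ appears with the right constant, and making sure the induction is genuinely circular-free — i.e., that the bound on $a_{k+1}$ in terms of $a_k$ uses only hypotheses for indices $<k+1$, and that the "reverse" inequality $a_k\le\rho a_{k+1}$ is extracted without needing $a_{k+1}$ as an a priori input. A careful choice of the induction statement (asserting both inequalities simultaneously up to index $k$, then proving both at $k+1$) handles this. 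The requirement $\rho\le\sigma$ enters precisely to guarantee $M_\rho<\infty$ (monotonicity of $\rho\mapsto\EE[\rho^{\vj}]$), and the requirement $\rho>1$ is what makes the stepsize bound \eqref{sm-eta-rate} positive and the factor $\rho>1$ meaningful as a growth ratio.
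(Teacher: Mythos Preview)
Your overall plan---control the one-step change $\nabla f(\vx^{k+1})-\nabla f(\vx^k)$ via the $L_r$-Lipschitz bound along block $i_k$, then induct simultaneously on both inequalities---is exactly the paper's strategy. The gap is in the specific way you turn the one-step bound into a recursion. By working with $a_k=\sqrt{\EE\|\nabla f(\vx^k)\|^2}$ and the $L^2$-triangle inequality, you are forced to bound $\sum_t q_t\,a_{k-t}^2\le \big(\sum_t q_t\rho^{2t}\big)a_k^2$, so the constant that appears is $M_{\rho^2}=\EE[\rho^{2\vj}]$, not $M_\rho$. This is not merely cosmetic: the lemma allows $\rho$ up to $\sigma$, and Assumption~\ref{assump2} gives no control on $M_{\rho^2}$ in that range (take $\rho=\sigma$). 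Even when $M_{\rho^2}<\infty$, the stepsize bound you derive, $\eta\le \tfrac{(\rho-1)\sqrt m}{\rho L_r\sqrt{M_{\rho^2}}}$, is not the same as~\eqref{sm-eta-rate}. You notice the mismatch but do not supply the mechanism that produces $M_\rho$.

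The fix the paper uses is to stay with squared norms throughout. Start from $\|\vu\|^2-\|\vv\|^2\le 2\|\vu\|\,\|\vu-\vv\|$ (and the companion bound with $\|\vu+\vv\|$ for the other direction), insert $\|\nabla f(\vx^{k+1})-\nabla f(\vx^k)\|\le \eta L_r\|U_{i_k}\nabla f(\vx^{k-j_k})\|$, and then apply Young's inequality with weights $(\tfrac{1}{\sqrt m},\sqrt m)$ to the product $\|\nabla f(\vx^k)\|\cdot\|U_{i_k}\nabla f(\vx^{k-j_k})\|$. After taking $\EE_{i_k}$ the $\sqrt m$-weighted term becomes $\tfrac{1}{\sqrt m}\|\nabla f(\vx^{k-j_k})\|^2$, so the delayed \emph{squared} gradient enters linearly. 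The induction hypothesis $\EE\|\nabla f(\vx^{k-t})\|^2\le\rho^{t}\EE\|\nabla f(\vx^k)\|^2$ then yields exactly $\sum_t q_t\rho^t+c_k\rho^k\le M_\rho$, and the ``$1+M_\rho$'' in~\eqref{sm-eta-rate} is the sum of the current-step and delayed contributions; the extra $\rho$ in the denominator comes from rewriting $\big(1-(1+M_\rho)\tfrac{\eta L_r}{\sqrt m}\big)^{-1}\le\rho$. Once you make this adjustment, your induction scheme (and your handling of the base case and circularity) goes through as written.
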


The proof of Lemma \ref{lem:bdgrad} follows an argument similar to~\cite{liu2014asynchronous}. Since it is rather long, it is included in the appendix. Similar to Lemma \ref{lem:thm-cvg-lem2}, we can show the following result.

\begin{lemma}\label{lem:thm-rate-cvx-sm-lem1}
For any $k$, it holds that
\begin{align}\label{eq:w-rate-fxk1}
&\textstyle \sum_{t=0}^{k-1} q_t\EE[-\langle \nabla f(\vx^k), \nabla f(\vx^{k-t})-\nabla f(\vx^k)\rangle]\cr
&-c_k\EE\langle \nabla f(\vx^k),\nabla f(\vx^0)-\nabla f(\vx^k)\rangle\cr
\le &\textstyle  \frac{\eta L_r}{2\sqrt{m}}\sum\limits_{d=1}^{k}c_{k-d}c_d\|\nabla f(\vx^0)\|^2+\frac{\eta L_r}{2\sqrt{m}}\sum\limits_{t=1}^{k-1}\sum\limits_{d=1}^t c_dq_{t-d}\EE\|\nabla f(\vx^{k-t})\|^2\nonumber\\
&\quad\textstyle +\frac{\eta L_r}{2\sqrt{m}}\left(\sum_{t=0}^{k-1} tq_t + kc_k\right)\EE\|\nabla f(\vx^k)\|^2.
\end{align}
\end{lemma}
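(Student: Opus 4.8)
The plan is to mimic the proof of Lemma \ref{lem:thm-cvg-lem2}, but now keeping the extra ``diagonal'' term coming from $-\langle\nabla f(\vx^k),\nabla f(\vx^k)\rangle$ explicit rather than cancelling it. First I would write, for each $t$,
\[
\nabla f(\vx^{k-t})-\nabla f(\vx^k)=\sum_{d=k-t}^{k-1}\Delta^d,\qquad \Delta^d:=\nabla f(\vx^{d})-\nabla f(\vx^{d+1}),
\]
and similarly $\nabla f(\vx^0)-\nabla f(\vx^k)=\sum_{d=0}^{k-1}\Delta^d$ for the second (lag-$k$) term. Using Cauchy--Schwarz together with $\|\Delta^d\|\le L_r\|\vx^{d+1}-\vx^d\|=\eta L_r\|\nabla_{i_d}f(\hvx^d)\|$ and Young's inequality with the same $\sqrt m$ weighting as in~\eqref{eq:lem:thm-cvg-lem2-eq1}, I would bound
\[
-\langle\nabla f(\vx^k),\nabla f(\vx^{k-t})-\nabla f(\vx^k)\rangle\le \tfrac{\eta L_r}{2}\sum_{d=k-t}^{k-1}\Big(\sqrt m\,\|\nabla_{i_d}f(\hvx^d)\|^2+\tfrac1{\sqrt m}\|\nabla f(\vx^k)\|^2\Big),
\]
and the analogous bound for the $c_k$-weighted term, the only difference being that the sum over $d$ runs from $0$ to $k-1$, so it contributes $\tfrac{\eta L_r}{2}\big(\sqrt m\sum_{d=0}^{k-1}\|\nabla_{i_d}f(\hvx^d)\|^2+\tfrac{k}{\sqrt m}\|\nabla f(\vx^k)\|^2\big)$. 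Note the squared-gradient factor here is $\|\nabla f(\vx^k)\|^2$, not $\|\nabla f(\vx^{k-t})\|^2$, which is what produces the last line of~\eqref{eq:w-rate-fxk1} with coefficient $\sum_{t=0}^{k-1}tq_t+kc_k$.

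Next I would take expectations, using the identity already established in the proof of Lemma~\ref{lem:thm-cvg-lem2},
\[
\EE_{i_d,j_d}\|\nabla_{i_d}f(\hvx^d)\|^2=\tfrac1m\Big(\sum_{r=0}^{d-1}q_r\|\nabla f(\vx^{d-r})\|^2+c_d\|\nabla f(\vx^0)\|^2\Big),
\]
and then combine the $q_t$-weighted sum over $t=0,\dots,k-1$ with the single $c_k$-weighted term. The bookkeeping is the delicate part: after substituting, the coefficient of $\|\nabla f(\vx^0)\|^2$ becomes $\tfrac{\eta L_r}{2m\sqrt m}\cdot m\big(\sum_{t=0}^{k-1}q_t\sum_{d=k-t}^{k-1}c_d+c_k\sum_{d=0}^{k-1}c_d\big)$; swapping the order of summation in the first piece via the identity $\sum_{t=k-d}^{k-1}q_t=c_{k-d}-c_k$ (the discrete ``summation by parts'' used in the earlier lemmas, cf.~the step labelled \eqref{seq1}) turns the bracket into $\sum_{d=1}^{k-1}(c_{k-d}-c_k)c_d+c_k\sum_{d=0}^{k-1}c_d=\sum_{d=1}^k c_{k-d}c_d$ after absorbing the $d=0$ and $d=k$ boundary contributions (using $c_0=1$). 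This yields exactly the first term $\tfrac{\eta L_r}{2\sqrt m}\sum_{d=1}^k c_{k-d}c_d\|\nabla f(\vx^0)\|^2$ on the right-hand side of~\eqref{eq:w-rate-fxk1}.

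Finally I would handle the $\EE\|\nabla f(\vx^r)\|^2$ terms for $1\le r\le k-1$ the same way as in~\eqref{eq:gradr}: the $q_t$-weighted inner sums $\sum_{d=k-t}^{k-1}\sum_{r=0}^{d-1}q_r\EE\|\nabla f(\vx^{d-r})\|^2$ reindex (shift $r\leftarrow d-r$, then swap $d$ and $r$, then put $t\leftarrow k-r$, $d\leftarrow k-d$) to give $\sum_{t=1}^{k-1}\big(\sum_{d=1}^t c_d q_{t-d}\big)\EE\|\nabla f(\vx^{k-t})\|^2$; the $c_k$-term contributes the same structure but with coefficient $c_k\sum_{d=1}^{k-1}(\text{stuff})$, which is dominated and folds into the same sum since $c_k\le c_{k-t}$-type monotonicity is not even needed — one just keeps $c_d$ as the upper bound, matching what is written. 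Collecting the $\|\nabla f(\vx^k)\|^2$ contributions from both terms gives coefficient $\tfrac{\eta L_r}{2\sqrt m}\big(\sum_{t=0}^{k-1}tq_t+kc_k\big)$, completing~\eqref{eq:w-rate-fxk1}. The main obstacle is purely the index-shuffling: making sure the $t=0$ summand, the boundary terms $d=0,k$, and the separate lag-$k$ contribution all land in the right place without double counting, so that the three clean sums in~\eqref{eq:w-rate-fxk1} emerge exactly. I expect no new analytic idea beyond Cauchy--Schwarz, Young's inequality, the Lipschitz bound on $\Delta^d$, and the discrete Fubini identities $\sum_{t=k-d}^{k-1}q_t=c_{k-d}-c_k$ and $\sum_{d=1}^\infty c_d=T$ already used above.
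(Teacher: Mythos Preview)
Your proposal is correct and follows essentially the same approach as the paper: telescope $\nabla f(\vx^{k-t})-\nabla f(\vx^k)$ into $\sum\Delta^d$, apply Cauchy--Schwarz/Young with the $\sqrt{m}$ weighting (now producing $\|\nabla f(\vx^k)\|^2$ rather than $\|\nabla f(\vx^{k-t})\|^2$), take expectations via the $\EE_{i_d,j_d}$ identity, and then combine the $q_t$-sum with the $c_k$-term through the same index manipulations as in~\eqref{eq:gradr} and the $c_{k-d}c_d$ identity. One small clarification: the $c_k$-contribution to the middle term is not merely ``dominated'' but combines \emph{exactly} with the $(c_d-c_k)q_{t-d}$ piece from~\eqref{eq:gradr} to give $c_d q_{t-d}$, so no inequality is needed there.
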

\begin{proof}
Following an argument similar to how~\eqref{eq:cross1} is obtained, we can show
\begin{align*}
    &\textstyle \sum\limits_{t=0}^{k-1} q_t\EE[-\langle \nabla f(\vx^k), \nabla f(\vx^{k-t})-\nabla f(\vx^k)\rangle]\cr
\le &\textstyle \frac{\eta L_r}{2\sqrt{m}}\sum\limits_{t=0}^{k-1} q_t\Big(\sum\limits_{d=k-t}^{k-1}(\sum\limits_{r=0}^{d-1} q_r\EE \|\nabla f(\vx^{d-r})\|^2+c_d\|\nabla f(\vx^0)\|^2)+t\EE\|\nabla f(\vx^k)\|^2\Big),\nonumber\\
    &\textstyle -c_k\EE\langle \nabla f(\vx^k),\nabla f(\vx^0)-\nabla f(\vx^k)\rangle\cr
\le &\textstyle \frac{\eta L_r}{2\sqrt{m}}c_k\Big(\sum\limits_{d=0}^{k-1}\Big(\sum\limits_{r=0}^{d-1} q_r\EE \|\nabla f(\vx^{d-r})\|^2+c_d\|\nabla f(\vx^0)\|^2\Big)+k\EE\|\nabla f(\vx^k)\|^2\Big).
\end{align*}
Using the above inequalities, we complete the proof 
by noting~\eqref{eq:gradr},
\begin{align}
\textstyle \sum_{t=0}^{k-1} q_t\sum_{d=k-t}^{k-1}c_d+c_k\sum_{d=0}^{k-1}c_d
=  &\textstyle\sum_{d=1}^{k-1} (c_{k-d}-c_k) c_d+c_k\sum_{d=0}^{k-1}c_d\cr
=  &\textstyle\sum_{d=1}^{k-1}c_{k-d}c_d+c_k = \sum_{d=1}^{k}c_{k-d}c_d,
\end{align}
and
$
\textstyle c_k\sum_{d=0}^{k-1}\sum_{r=0}^{d-1} q_r \|\nabla f(\vx^{d-r})\|^2=\sum_{t=1}^{k-1}\sum_{d=1}^t c_k q_{t-d}\|\nabla f(\vx^{k-t})\|^2.
$
\qed\end{proof}
Using the above two lemmas, we establish sufficient objective decrease. 

\begin{theorem}[Sufficient progress]\label{thm:dec1}
Under Assumptions \ref{assump-sol} through \ref{assump2}, we let $\{\vx^k\}_{k\ge 1}$ be the sequence generated from Algorithm \ref{alg:asyn_bcd}. For a certain $1<\rho < \sigma$, define
\begin{equation}\label{N-rho}
N_\rho:=\EE[\vj\rho^{\vj}].
\end{equation}
Take the stepsize such that~\eqref{sm-eta-rate} is satisfied and also
\begin{equation}\label{step-eta2}
0<\eta< 
2\Big(L_c(M_\rho+\tfrac{\kappa(2N_\rho M_\rho+T)}{\sqrt{m}})\Big)^{-1}.
\end{equation}
Let
\begin{equation}\label{def-const-D}\textstyle D=\frac{\eta}{2m}\left(2-\tfrac{\eta L_r}{\sqrt{m}}(2N_\rho M_\rho+T)-\eta L_c M_\rho\right).
\end{equation}
Then,
\begin{equation}\label{dec}
\EE f(\vx^{k+1})\le \EE f(\vx^k) - D\EE\|\nabla f(\vx^k)\|^2.
\end{equation}
\end{theorem}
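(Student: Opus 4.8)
The plan is to rerun the one-step estimate behind Theorem~\ref{lem:thm-cvg-lem3}, but instead of carrying the $\|\nabla f(\vx^0)\|^2$ term and all the older gradient norms along, to fold every one of them back into $\EE\|\nabla f(\vx^k)\|^2$ using the gradient-comparison bound of Lemma~\ref{lem:bdgrad}, which is available precisely because $\eta$ obeys~\eqref{sm-eta-rate}.

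\emph{Step 1 (one-step expansion).} Apply~\eqref{lip-ineq} to the block update $\vx^{k+1}=\vx^k-\eta U_{i_k}\nabla f(\vx^{k-j_k})$, take the conditional expectation over $(i_k,j_k)$ and then the total expectation, exactly as in the proof of Theorem~\ref{lem:thm-cvg-lem3}, to reach
\[\EE f(\vx^{k+1}) \le \EE f(\vx^k) - \tfrac{\eta}{m}\Big(\textstyle\sum_{t=0}^{k-1}q_t\EE\langle\nabla f(\vx^k),\nabla f(\vx^{k-t})\rangle + c_k\EE\langle\nabla f(\vx^k),\nabla f(\vx^0)\rangle\Big) + \tfrac{\eta^2 L_c}{2m}\Big(\textstyle\sum_{t=0}^{k-1}q_t\EE\|\nabla f(\vx^{k-t})\|^2 + c_k\|\nabla f(\vx^0)\|^2\Big).\]
Now write each inner product as $\langle\nabla f(\vx^k),\nabla f(\vx^{k-t})\rangle = \|\nabla f(\vx^k)\|^2 + \langle\nabla f(\vx^k),\nabla f(\vx^{k-t})-\nabla f(\vx^k)\rangle$ (and similarly with $\vx^0$), and use $\sum_{t=0}^{k-1}q_t+c_k=1$. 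The leading part becomes $-\tfrac{\eta}{m}\EE\|\nabla f(\vx^k)\|^2$, and the remaining cross part equals $\tfrac{\eta}{m}$ times the left-hand side of Lemma~\ref{lem:thm-rate-cvx-sm-lem1}, hence, by that lemma, is at most $\tfrac{\eta^2 L_r}{2m\sqrt m}$ times the three sums on its right-hand side, namely $\sum_{d=1}^{k}c_{k-d}c_d\|\nabla f(\vx^0)\|^2$, $\sum_{t=1}^{k-1}\sum_{d=1}^{t}c_dq_{t-d}\EE\|\nabla f(\vx^{k-t})\|^2$, and $\big(\sum_{t=0}^{k-1}tq_t+kc_k\big)\EE\|\nabla f(\vx^k)\|^2$.

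\emph{Step 2 (collapse the sums).} Iterating the first inequality in Lemma~\ref{lem:bdgrad} gives $\EE\|\nabla f(\vx^{k-t})\|^2\le\rho^t\EE\|\nabla f(\vx^k)\|^2$ for $0\le t\le k$, in particular $\|\nabla f(\vx^0)\|^2\le\rho^k\EE\|\nabla f(\vx^k)\|^2$ since $\vx^0$ is deterministic. After this substitution every sum is a nonnegative multiple of $\EE\|\nabla f(\vx^k)\|^2$, so all finite sums may be extended to infinite ones. Using $\rho\ge1$ one has $c_j\rho^j\le\sum_{t\ge j}q_t\rho^t$, hence $c_j\rho^j\le M_\rho$ for every $j$ and $\sum_{j\ge1}c_j\rho^j\le\sum_{t\ge1}tq_t\rho^t=N_\rho$; combining these with the elementary convolution bound $\sum_{d=0}^{k}a_{k-d}b_d\le\big(\sum_{j\ge0}a_j\big)\big(\sum_{j\ge0}b_j\big)$ for nonnegative sequences, and splitting $\rho^k=\rho^{k-d}\rho^d$, respectively $\rho^t=\rho^d\rho^{t-d}$, yields $\rho^k\sum_{d=1}^{k}c_{k-d}c_d\le N_\rho M_\rho$ and $\sum_{t=1}^{k-1}\sum_{d=1}^{t}c_dq_{t-d}\rho^t\le N_\rho M_\rho$, while $\sum_{t=0}^{k-1}tq_t+kc_k\le T$ and $\sum_{t=0}^{k-1}q_t\rho^t+c_k\rho^k\le M_\rho$. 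Finiteness of $M_\rho$ and $N_\rho$ is guaranteed by $\rho<\sigma$ and Assumption~\ref{assump2}.

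\emph{Step 3 (collect).} Adding up, the coefficient of $\EE\|\nabla f(\vx^k)\|^2$ on the right becomes
\[-\tfrac{\eta}{m}+\tfrac{\eta^2 L_r}{2m\sqrt m}(2N_\rho M_\rho+T)+\tfrac{\eta^2 L_c}{2m}M_\rho=-\tfrac{\eta}{2m}\Big(2-\tfrac{\eta L_r}{\sqrt m}(2N_\rho M_\rho+T)-\eta L_c M_\rho\Big)=-D,\]
which gives~\eqref{dec}; moreover $D>0$ is exactly condition~\eqref{step-eta2} after substituting $L_r=\kappa L_c$. I expect the only delicate part to be the bookkeeping in Step~2: to land the precise constant $2N_\rho M_\rho+T$ one must pair the $\rho$-powers across the double and convolution sums correctly and decide, for each factor $c_j\rho^j$, whether to bound it crudely by $M_\rho$ or to sum it up to $N_\rho$ — everything else is mechanical once Lemma~\ref{lem:bdgrad} is in hand.
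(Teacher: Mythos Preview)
Your proposal is correct and follows essentially the same route as the paper's proof: start from the one-step expansion~\eqref{eq:thm-cvg-eq1}, rewrite the inner products around $\|\nabla f(\vx^k)\|^2$, invoke Lemma~\ref{lem:thm-rate-cvx-sm-lem1} for the cross part, then use Lemma~\ref{lem:bdgrad} to replace every $\EE\|\nabla f(\vx^{k-t})\|^2$ by $\rho^t\EE\|\nabla f(\vx^k)\|^2$ and bound the resulting $\rho$-weighted sums by $N_\rho M_\rho$, $T$, and $M_\rho$ exactly as you outline. The paper records the same inequalities $\sum_{d\ge1}c_d\rho^d\le N_\rho$, $c_j\rho^j\le M_\rho$, and the two convolution estimates you list; your Step~3 arithmetic matches the paper's collection step verbatim.
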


\begin{proof}
First note that for any $\rho<\sigma$, $t\rho^t$ is dominated by $\sigma^t$ as $t$ is sufficiently large. Hence, $N_\rho<\infty$ from~\eqref{eq:cond-sig}, and it is easy to see $T<\infty$. Also note that
\begin{equation}\label{N-rho2}
\textstyle\EE[\vj \rho^{\vj}]=\sum\limits_{t=1}^\infty t q_t \rho^t=\sum\limits_{t=1}^\infty \sum\limits_{d=1}^t q_t \rho^t=\sum\limits_{d=1}^\infty \sum\limits_{t=d}^\infty q_t \rho^t\geq\sum\limits_{d=1}^\infty \sum\limits_{t=d}^\infty q_t \rho^d=\sum\limits_{d=1}^\infty c_d \rho^d.
\end{equation}
We write the cross terms in~\eqref{eq:thm-cvg-eq1} to
$$\langle \nabla f(\vx^k), \nabla f(\vx^{k-t})\rangle=\langle \nabla f(\vx^k), \nabla f(\vx^{k-t})-\nabla f(\vx^k)\rangle + \|\nabla f(\vx^k)\|^2.$$
Taking expectation on both sides of~\eqref{eq:thm-cvg-eq1} and using~\eqref{eq:w-rate-fxk1}, we have
\begin{align}
\EE f(\vx^{k+1})
\le &\textstyle\EE f(\vx^k)+\frac{\eta^2 L_r}{2m\sqrt{m}}\sum\limits_{d=1}^{k}c_{k-d}c_d\|\nabla f(\vx^0)\|^2\nonumber\\
&\textstyle+\frac{\eta^2 L_r}{2m\sqrt{m}}\sum\limits_{t=1}^{k-1}\sum\limits_{d=1}^t c_dq_{t-d}\EE\|\nabla f(\vx^{k-t})\|^2\cr
&\textstyle+\frac{\eta^2 L_r}{2m\sqrt{m}}\left(\sum\limits_{t=0}^{k-1} tq_t + kc_k\right)\EE\|\nabla f(\vx^k)\|^2 -\frac{\eta}{m}\EE\|\nabla f(\vx^k)\|^2\nonumber\\
&\textstyle+\frac{\eta^2 L_c}{2m}\sum\limits_{t=0}^{k-1} q_t\EE\|\nabla f(\vx^{k-t})\|^2+\frac{\eta^2 L_c}{2m} c_k\|\nabla f(\vx^0)\|^2.\label{eq:w-rate-fxk0}
\end{align}
The above inequality together with~\eqref{eq:bdgrad} implies
\begin{align}\label{eq:w-rate-fxk2}
\EE f(\vx^{k+1})
\le &\textstyle\EE f(\vx^k)+\frac{\eta^2 L_r}{2m\sqrt{m}}\sum_{d=1}^{k}c_{k-d}c_d\rho^k\EE\|\nabla f(\vx^k)\|^2\nonumber \\
&\textstyle+\frac{\eta^2 L_r}{2m\sqrt{m}}\sum_{t=1}^{k-1}\sum_{d=1}^t c_dq_{t-d}\rho^t\EE\|\nabla f(\vx^{k})\|^2\cr
&\textstyle+\frac{\eta^2 L_r}{2m\sqrt{m}}\left(\sum_{t=0}^{k-1} tq_t + kc_k\right)\EE\|\nabla f(\vx^k)\|^2 -\frac{\eta}{m}\EE\|\nabla f(\vx^k)\|^2\nonumber\\
&\textstyle+\frac{\eta^2 L_c}{2m}\sum_{t=0}^{k-1} q_t\rho^t\EE\|\nabla f(\vx^{k})\|^2+\frac{\eta^2 L_c}{2m} c_k\rho^k\EE\|\nabla f(\vx^k)\|^2.
\end{align}
Note that $\sum_{t=1}^{k-1}\sum_{d=1}^t c_dq_{t-d}\rho^t\le \sum_{t=1}^\infty\sum_{d=1}^t c_dq_{t-d}\rho^t$, which by exchanging summations equals $\sum_{d=1}^\infty c_d\rho^d\sum_{t=d}^\infty q_{t-d}\rho^{t-d}\overset{(\ref{N-rho2})}\leq N_\rho M_\rho$. Also note that $\sum_{d=1}^{k}c_{k-d}c_d\rho^k=\sum_{d=1}^{k}c_{d}\rho^d c_{k-d}\rho^{k-d}\leq \sum_{d=1}^{k}c_{d}\rho^d\left(\sum_{r=0}^\infty q_r \rho^r\right)\le N_\rho M_\rho$.
From these relations and~\eqref{eq:w-rate-fxk2}, we obtain 
\begin{align*}
\EE f(\vx^{k+1})&\le\textstyle\EE f(\vx^k)+\frac{\eta^2 L_r}{m\sqrt{m}}N_\rho M_\rho\|\nabla f(\vx^k)\|^2 \\
&\textstyle+\frac{\eta^2 L_r}{2m\sqrt{m}}\left(\sum_{t=0}^{k-1} tq_t + kc_k\right)\EE\|\nabla f(\vx^k)\|^2 -\frac{\eta}{m}\|\nabla f(\vx^k)\|^2\cr
&\textstyle +\frac{\eta^2 L_c}{2m}\sum_{t=0}^{k-1} q_t\rho^t\EE\|\nabla f(\vx^{k})\|^2+\frac{\eta^2 L_c}{2m} c_k\rho^k\EE\|\nabla f(\vx^k)\|^2\cr
 & \textstyle\le\EE f(\vx^k)+\left(\frac{\eta^2 L_r}{2m\sqrt{m}}(2N_\rho M_\rho+T)+\frac{\eta^2 L_c}{2m}M_\rho-{\eta\over m}\right)\EE\|\nabla f(\vx^k)\|^2,
\end{align*}
which completes the proof.
\qed\end{proof}

Using~\eqref{dec} and the convexity of $f$, we  establish the following convergence rate. 

\begin{theorem}\textbf{\emph{(Convergence rate for the convex smooth case)}}\label{thm:rate-cvx-sm}
Under the assumptions of Theorem \ref{thm:dec1}, we have
\begin{enumerate}
\item If $f$ is convex and $\|\vx^k-\cP_{X^*}(\vx^k)\|\le B,\,\forall k$ for a certain constant $B$, then
\begin{align}\label{rate-wcvx-sm}
\textstyle\EE [f(\vx^{k+1})-f^*]
\le\frac{1}{(f(\vx^0)-f^*)^{-1}+(k+1)DB^{-2}},
\end{align}
where $f^*$ denotes the minimum value of~\eqref{eq:main-sm} and $D$ is given in~\eqref{def-const-D}.
\item If $f$ is strongly convex with constant $\mu$, then
\begin{equation}\label{sm-linear-rate}
\EE[f(\vx^{k+1})-f^*]\le (1-2\mu D)\EE[f(\vx^k)-f^*],
\end{equation}
where $D$ is given in~\eqref{def-const-D}.
\end{enumerate}
\end{theorem}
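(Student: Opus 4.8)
The plan is to derive both rates from the single descent inequality \eqref{dec}, $\EE f(\vx^{k+1})\le \EE f(\vx^k) - D\,\EE\|\nabla f(\vx^k)\|^2$, which holds under the stated stepsize conditions with $D>0$. Write $\delta_k := \EE[f(\vx^k)-f^*]\ge 0$; the inequality reads $\delta_{k+1}\le \delta_k - D\,\EE\|\nabla f(\vx^k)\|^2$. The whole argument is then about lower-bounding $\EE\|\nabla f(\vx^k)\|^2$ in terms of $\delta_k$, using convexity in each of the two cases, and feeding the result into a standard recursion lemma.

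For part~(1), the standard convex trick: by convexity of $f$, for any $\vx$,
\[
f(\vx)-f^* \le \langle \nabla f(\vx), \vx - \cP_{X^*}(\vx)\rangle \le \|\nabla f(\vx)\|\cdot\|\vx-\cP_{X^*}(\vx)\| \le B\|\nabla f(\vx)\|,
\]
so $\|\nabla f(\vx^k)\|^2 \ge (f(\vx^k)-f^*)^2/B^2$ pointwise; taking expectations and using Jensen ($\EE[Y^2]\ge (\EE Y)^2$ for $Y=f(\vx^k)-f^*\ge 0$) gives $\EE\|\nabla f(\vx^k)\|^2 \ge \delta_k^2/B^2$. Plugging into the descent inequality yields $\delta_{k+1}\le \delta_k - (D/B^2)\delta_k^2$. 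The conclusion \eqref{rate-wcvx-sm} is then the classical lemma: if $a_{k+1}\le a_k - c\,a_k^2$ with $a_k>0$, then $a_k \le 1/(a_0^{-1}+ck)$, proved by dividing through by $a_k a_{k+1}$ to get $1/\delta_{k+1} \ge 1/\delta_k + (D/B^2)(\delta_k/\delta_{k+1}) \ge 1/\delta_k + D/B^2$ (since $\delta_{k+1}\le\delta_k$), then telescoping; shifting the index by one matches the stated form with $k+1$.

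For part~(2), $\mu$-strong convexity of $f$ gives the Polyak--Łojasiewicz inequality $\tfrac12\|\nabla f(\vx)\|^2 \ge \mu (f(\vx)-f^*)$, hence $\EE\|\nabla f(\vx^k)\|^2 \ge 2\mu\,\delta_k$. Substituting into the descent inequality gives $\delta_{k+1}\le (1-2\mu D)\,\delta_k$, which is \eqref{sm-linear-rate} (and $1-2\mu D\in[0,1)$ since $2\mu D\le$ the usual contraction bound; one should note $D\le \tfrac{\eta}{m}\le$ something ensuring nonnegativity, though the statement does not explicitly claim $1-2\mu D\ge 0$, only the recursion).

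The only genuine obstacle is the expectation bookkeeping: the descent inequality is already stated in expectation, so I must make sure the convexity bounds on $\|\nabla f(\vx^k)\|^2$ survive taking expectations, which is exactly where Jensen's inequality is needed in part~(1) (the map $t\mapsto t^2$ is convex, and $f(\vx^k)-f^*\ge 0$, so $\EE[(f(\vx^k)-f^*)^2]\ge (\EE[f(\vx^k)-f^*])^2$). Part~(2) needs no such care since PL is linear in $f(\vx^k)-f^*$. Everything else is the two-line recursion lemmas, so this is essentially routine once \eqref{dec} is in hand.
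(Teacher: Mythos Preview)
Your proposal is correct and follows essentially the same route as the paper's own proof: both parts start from the descent inequality \eqref{dec}, lower-bound $\|\nabla f(\vx^k)\|^2$ via convexity (the bound $f(\vx^k)-f^*\le B\|\nabla f(\vx^k)\|$ in part~(1), the PL inequality in part~(2)), and then apply the standard $1/\delta_k$-telescoping and linear-contraction recursions respectively. Your explicit invocation of Jensen's inequality in part~(1) is exactly the step the paper leaves implicit when passing from $\EE(f(\vx^k)-f^*)^2$ to $(\EE[f(\vx^k)-f^*])^2$.
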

\begin{remark}
For the sublinear rate in~\eqref{rate-wcvx-sm}, 
we assume the boundedness of the iterates. This assumption can be relaxed if we use potentially smaller stepsize; see Theorem \ref{thm:ns-rate}.

For the linear convergence, the assumption on strongly convexity can be weakened to \emph{either essential or restrict strong convexity}, 
see~\cite{LaiYin2013_augmented} and~\cite{liu2014asynchronous}.
\end{remark}

\begin{proof}
If $\|\vx^k-\cP_{X^*}(\vx^k)\|\le B$, then from $f(\vx^k)-f(\cP_{X^*}(\vx^k))\le \langle\nabla f(\vx^k),\vx^k-\cP_{X^*}(\vx^k)\rangle$, we have
$$|f(\vx^k)-f^*|\le \|\nabla f(\vx^k)\|\cdot\|\vx^k-\cP_{X^*}(\vx^k)\|\le B\|\nabla f(\vx^k)\|,$$
and thus
\begin{equation}\label{bd-iter}
\textstyle\|\nabla f(\vx^k)\|^2\ge \frac{1}{B^2}(f(\vx^k)-f^*)^2.
\end{equation}
Substituting~\eqref{bd-iter} into~\eqref{dec} yields
$$\textstyle\EE f(\vx^{k+1})\le \EE f(\vx^k)-\frac{D}{B^2}\EE(f(\vx^k)-f^*)^2.$$
Hence,
\begin{align*}
&\textstyle~\EE [f(\vx^{k+1})-f^*] \le \EE [f(\vx^k)-f^*]-\frac{D}{B^2}\EE (f(\vx^k)-f^*)^2\\
\Rightarrow\, &\textstyle~\frac{1}{\EE [f(\vx^{k+1})-f^*]}\ge \frac{1}{\EE [f(\vx^k)-f^*]}+\frac{D}{B^2}\frac{\EE [f(\vx^k)-f^*]}{\EE [f(\vx^{k+1})-f^*]}\ge\frac{1}{\EE [f(\vx^k)-f^*]}+\frac{D}{B^2}\\
\Rightarrow\, &\textstyle~\frac{1}{\EE [f(\vx^{k+1})-f^*]}\ge\frac{1}{[f(\vx^0)-f^*]}+\frac{D(k+1)}{B^2},
\end{align*}
and thus~\eqref{rate-wcvx-sm} holds.

If $f$ is strongly convex with constant $\mu$, then
$$-\tfrac{1}{2\mu}\|\nabla f(\vx^k)\|^2\le f^*-f(\vx^k).$$
We immediately have~\eqref{sm-linear-rate} from~\eqref{dec} and the above inequality. This completes the proof.
\hfill
\qed\end{proof}

\section{Convergence results for the nonsmooth case} In this section, we analyze the convergence of Algorithm \ref{alg:asyn_bcd} for possibly nonsmooth cases. Throughout this section, we let
$$\bar{\vx}^{k+1}=\prox_{\eta R}\left(\vx^k-\eta\nabla f(\vx^{k-j_k})\right)$$
a virtual full-update iterate, where $R$ is defined in~\eqref{def-R}, and denote 
$$\vd^{k} = \bar\vx^{k+1}-\vx^k.$$
Due to more generality, we will make stronger assumptions on the delay than those made in the previous section. But all these assumptions are satisfied if the delay is uniformly bounded or follows the Poisson distribution, as shown in Section~\ref{sec:poisson}.

\subsection{Convergence for the nonconvex case} We first establish the almost sure global convergence for possibly nonconvex cases starting with the following square summable result.

\begin{lemma}[Square summability]\label{lem:ns-sqsum}
Under Assumptions~\ref{assump-sol} through~\ref{assump-read}, we let $\{\vx^k\}_{k\ge 1}$ be the sequence generated in Algorithm \ref{alg:asyn_bcd}.
Assume $S<\infty$,
and the stepsize is taken as
$0<\eta< \frac{1/L_c}{1+\kappa^2S/(2m)}.$
Then \begin{equation}\label{ns-sqsum}
\sum_{k=0}^\infty\EE\|\vd^k\|^2<\infty.
\end{equation}
\end{lemma}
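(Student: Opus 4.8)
The plan is to mimic the structure of the smooth nonconvex analysis (Lemmas~\ref{lem:thm-cvg-lem1}--\ref{lem:thm-cvg-lem3} and Theorem~\ref{thm:cvg}), but now tracking the virtual full-update step $\vd^k$ instead of $\nabla f(\vx^k)$. First I would write down the one-step descent inequality. Since only block $i_k$ is actually updated, $\vx^{k+1}-\vx^k = U_{i_k}(\bar\vx^{k+1}-\vx^k) = U_{i_k}\vd^k$, and by~\eqref{lip-ineq} together with the optimality (prox) inequality defining $\bar\vx^{k+1}$, one obtains a bound of the form
\begin{equation*}
F(\vx^{k+1}) \le F(\vx^k) - \big(\tfrac{1}{\eta}-\tfrac{L_c}{2}\big)\|U_{i_k}\vd^k\|^2 + \langle \nabla_{i_k} f(\vx^k) - \nabla_{i_k} f(\vx^{k-j_k}), (\vd^k)_{i_k}\rangle .
\end{equation*}
Taking $\EE_{i_k}$ turns $\|U_{i_k}\vd^k\|^2$ into $\tfrac1m\|\vd^k\|^2$ and the cross term into $\tfrac1m\langle \nabla f(\vx^k)-\nabla f(\vx^{k-j_k}), \vd^k\rangle$ (using independence of $i_k$ and $j_k$).

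Next I would control the delay-induced cross term. Writing $\nabla f(\vx^k)-\nabla f(\vx^{k-j_k})$ as a telescoping sum of increments $\nabla f(\vx^{d+1})-\nabla f(\vx^d)$, each of norm at most $L_r\|\vx^{d+1}-\vx^d\| = L_r\|U_{i_d}\vd^d\|$, and applying Young's inequality, the cross term is bounded by a weighted combination of $\|\vd^k\|^2$ and past $\|\vd^d\|^2$'s, with weights built from the tail probabilities $c_t$; taking full expectation and exchanging the order of summation (exactly as in Lemma~\ref{lem:thm-cvg-lem2}, now using $S=\EE[\vj^2]<\infty$ to make the doubly-summed coefficients finite — this is where $S$ rather than just $T$ enters, because the telescoping sum has length $j_k$ and the coefficient of each past term picks up an extra factor) yields a ``fundamental bound'' analogous to~\eqref{eq:fx2}:
\begin{equation*}
\EE F(\vx^{k+1}) \le \EE F(\vx^k) - a_k\,\EE\|\vd^k\|^2 - \sum_{t=1}^{k-1} C_{t,k}\,\EE\|\vd^{k-t}\|^2 + (\text{small initialization term}),
\end{equation*}
where the initialization term is summable in $k$. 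Summing this over $k=0,\dots,K$, telescoping $F$, regrouping the double sum over past iterates so that the total coefficient of each $\EE\|\vd^k\|^2$ is $\beta_k+\sum_{t>k}C_{t-k,t}$, and invoking the lower boundedness of $F$ (Assumption~\ref{assump-sol}), I get $\sum_k (\text{coeff}_k)\EE\|\vd^k\|^2<\infty$. The final step is to check that $\inf_k \text{coeff}_k>0$ under the stepsize rule $0<\eta<\frac{1/L_c}{1+\kappa^2 S/(2m)}$; the threshold is chosen precisely so that $\tfrac1\eta - \tfrac{L_c}{2}$ minus the accumulated $L_r$-coefficient from the cross-term estimate stays bounded below by a positive constant, which gives~\eqref{ns-sqsum}.

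The main obstacle I anticipate is the bookkeeping in the cross-term estimate: unlike the smooth case where $\nabla f(\vx^k)$ was tracked, here the increments $\|\vx^{d+1}-\vx^d\|$ are $\|U_{i_d}\vd^d\|$, so after the telescoping-sum/Young's-inequality step one must be careful that the coefficient multiplying each past $\EE\|\vd^d\|^2$ is summable — this forces the use of $S<\infty$ (second moment) rather than just $T<\infty$, and it is what distinguishes the stepsize bound $\frac{1/L_c}{1+\kappa^2S/(2m)}$ from the smooth-case bound. A secondary technical point is handling the boundary/initialization terms $c_k\|\vd^0\|^2$ (or $c_k\|\nabla f(\vx^0)\|$-type quantities), but since $\sum_k c_k = T<\infty \le$ (something finite under $S<\infty$), these contribute only a finite amount and do not affect square-summability. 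No convexity of $f$ is used anywhere, consistent with the statement.
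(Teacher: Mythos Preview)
Your plan is essentially correct and follows the paper's proof: one-step descent via~\eqref{lip-ineq} plus the prox optimality inequality, expectation over $i_k$, telescoping the delayed gradient difference, Young's inequality, then summing over $k$ and invoking lower boundedness of $F$. Two points where the paper's execution differs from your sketch are worth noting.

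First, the cross-term bound is \emph{not} done ``exactly as in Lemma~\ref{lem:thm-cvg-lem2}.'' Termwise Young's (as in that lemma) would produce a term proportional to $j_k\|\vd^k\|^2$, and since $\vd^k$ itself depends on $j_k$ through $\nabla f(\vx^{k-j_k})$ inside the prox, the expectation $\EE[j_k\|\vd^k\|^2]$ does not factor. The paper instead applies a \emph{single} Young's inequality with weight $\kappa$ to $\|\vd^k\|\cdot\|\nabla f(\vx^k)-\nabla f(\vx^{k-j_k})\|$, and then uses $(\sum_d\|\vx^{d+1}-\vx^d\|)^2\le j_k\sum_d\|\vx^{d+1}-\vx^d\|^2$. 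This places the extra $j_k$ factor on the past increments (which are independent of $j_k$), leaves the coefficient of $\|\vd^k\|^2$ equal to the constant $\tfrac{L_r}{2\kappa}=\tfrac{L_c}{2}$, and is exactly what makes $\kappa^2 S/(2m)$ appear in the stepsize denominator after summing. Your phrase ``the coefficient of each past term picks up an extra factor'' suggests you already have this picture; just be aware that it is this step, not the Lemma~\ref{lem:thm-cvg-lem2} mechanism, that does the work.

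Second, the nonsmooth recursion is actually \emph{simpler} than the smooth one: because $\EE_{i_d}\|\vx^{d+1}-\vx^d\|^2=\tfrac1m\EE\|\vd^d\|^2$ directly, there is no second layer of delay unwrapping, hence no initialization term and no $k$-dependent coefficients $\beta_k,C_{t,k}$. After summing the one-step bound and swapping sums, every $\EE\|\vd^k\|^2$ carries the same coefficient $\tfrac1m(\tfrac1\eta-L_c)-\tfrac{\kappa L_r S}{2m^2}$, whose positivity is precisely the stepsize condition.
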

\begin{proof}
By the definition of $\bar{\vx}^{k+1}$, we have
$-\nabla f(\vx^{k-j_k})-\tfrac{1}{\eta}\vd^k\in\partial R(\bar{\vx}^{k+1})$,
which together with the convexity of $R$ implies that, for any $\vx$,
\begin{equation}\label{eq:ineq-R}
R(\bar{\vx}^{k+1})-R(\vx)\le -\langle \nabla f(\vx^{k-j_k})+\tfrac{1}{\eta}\vd^k,\bar{\vx}^{k+1}-\vx\rangle.
\end{equation}
By $\vx^{k+1}=\vx^k+U_{i_k}\vd^k$ and~\eqref{lip-ineq}, we get
$F(\vx^{k+1})\le f(\vx^k)+\langle\nabla_{i_k}f(\vx^k),\vd^k_{i_k}\rangle+\tfrac{L_c}{2}\|{\vd}^{k}_{i_k}\|^2+R(\vx^{k+1}).$
To this inequality, take conditional expectation on $i_k$:
$$\textstyle\EE_{i_k} F(\vx^{k+1})\le F(\vx^k)+\frac{1}{m}\left(\langle\nabla f(\vx^k), \vd^k\rangle+\frac{L_c}{2}\|\vd^k\|^2+R(\bar{\vx}^{k+1})-R(\vx^k)\right).$$
To bound the right-hand side, we split the cross term as $$\langle\nabla f(\vx^k), \vd^k\rangle=\langle\nabla f(\vx^{k-j_k}), \vd^k\rangle+\langle\nabla f(\vx^k)-\nabla f(\vx^{k-j_k}),\vd^k\rangle$$ and apply~\eqref{eq:ineq-R} with $\vx=\vx^k$, arriving at
\begin{align}\label{ns-ineq1}
\textstyle\EE_{i_k} F(\vx^{k+1})
\le &\textstyle F(\vx^k)+\frac{1}{m}\big(\frac{L_c}{2}-\frac{1}{\eta}\big)\|\vd^k\|^2+\frac{1}{m}\langle\nabla f(\vx^k)-\nabla f(\vx^{k-j_k}),\vd^k\rangle.
\end{align}
Following a similar argument in the proof of Lemma \ref{lem:thm-cvg-lem2} and Young's inequality, we get
\begin{align}\nonumber
\textstyle \langle\nabla f(\vx^k)-\nabla f(\vx^{k-j_k}),\vd^k\rangle\le &\textstyle L_r\sum\limits_{d=k-j_k}^{k-1}\|\vx^{d+1}-\vx^d\|\cdot\|\vd^k\|\\
  \le &\textstyle\frac{L_r}{2\kappa}\|\vd^k\|^2+\frac{\kappa L_r}{2} \Big(j_k\sum\limits_{d=k-j_k}^{k-1}\|\vx^{d+1}-\vx^d\|^2\Big).\label{eq:pf-lem:ns-sqsum-eq2}
\end{align}
Note that
\begin{align}\label{diff-zero}
 &\textstyle \EE\Big[j_k\sum\limits_{d=k-j_k}^{k-1}\|\vx^{d+1}-\vx^d\|^2\Big]\cr
=&\textstyle  \sum\limits_{t=1}^{k-1} q_t t \sum\limits_{d=k-t}^{k-1}\EE\|\vx^{d+1}-\vx^d\|^2+ \sum\limits_{t=k}^\infty q_tt\sum\limits_{d=0}^{k-1}\EE\|\vx^{d+1}-\vx^d\|^2\cr
=&\textstyle \frac{1}{m}\sum\limits_{t=1}^{k-1} q_t t \sum\limits_{d=k-t}^{k-1}\EE\|{\vd}^{d}\|^2+\frac{1}{m} \sum\limits_{t=k}^\infty q_tt\sum\limits_{d=0}^{k-1}\EE\|\vd^d\|^2.
\end{align}
Hence, 
taking expectation yields
\begin{align}\label{ns-ineq2}
&\EE\langle\nabla f(\vx^k)-\nabla f(\vx^{k-j_k}),\vd^k\rangle\cr
&\hspace{-10pt}\le\textstyle \frac{L_r}{2}\Big[\frac{1}{\kappa}\EE\|\vd^k\|^2+\frac{\kappa}{m}\Big(\sum\limits_{t=1}^{k-1} q_t t \sum\limits_{d=k-t}^{k-1}\EE\|\vd^d\|^2+\sum\limits_{t=k}^{\infty} q_t t \sum\limits_{d=0}^{k-1}\EE\|\vd^d\|^2\Big)\Big].
\end{align}
Taking expectation on both sides of~\eqref{ns-ineq1} and substituting~\eqref{ns-ineq2} yield
\begin{align}\label{ns-ineq3}
&\textstyle \EE[F(\vx^{k+1})-F(\vx^k)]+\frac{1}{m}\left(\frac{1}{\eta}-L_c\right)\EE\|\vd^k\|^2\cr
\le &\textstyle \frac{\kappa L_r}{2m^2}\Big(\sum\limits_{t=1}^{k-1} q_t t \sum\limits_{d=k-t}^{k-1}\EE\|\vd^d\|^2+\sum\limits_{t=k}^{\infty} q_t t \sum\limits_{d=0}^{k-1}\EE\|\vd^d\|^2\Big).
\end{align}
From Lemma \ref{lem:2seq}, we have that for any $K\ge0$,
\begin{align}\label{ns-ineq4}
\textstyle \sum\limits_{k=0}^K\sum\limits_{t=1}^{k-1} q_t t \sum\limits_{d=k-t}^{k-1}\EE\|\vd^d\|^2
\overset{(\ref{seq1})}=&\textstyle \sum\limits_{k=0}^K\sum\limits_{d=1}^{k-1}\Big(\sum\limits_{t=k-d}^{k-1}q_t t\Big)\EE\|\vd^d\|^2\cr
\overset{(\ref{seq2})}=&\textstyle \sum\limits_{d=1}^{K-1}\sum\limits_{k=d+1}^K\Big(\sum\limits_{t=k-d}^{k-1}q_t t\Big)\EE\|\vd^d\|^2\cr
[k\leftrightarrow d]\quad=&\textstyle \sum\limits_{k=1}^{K-1}\Big(\sum\limits_{d=k+1}^K\sum\limits_{t=d-k}^{d-1}q_t t\Big)\EE\|\vd^k\|^2,\\
\text{and}\qquad\textstyle\sum\limits_{k=0}^K\sum\limits_{t=k}^{\infty} q_t t \sum\limits_{d=0}^{k-1}\EE\|\vd^d\|^2
=&\textstyle\sum\limits_{k=1}^K\sum\limits_{d=0}^{k-1}\Big(\sum\limits_{t=k}^{\infty} q_t t\Big)\EE\|\vd^k\|^2\cr
\overset{(\ref{seq2})}=&\textstyle\sum\limits_{d=0}^{K-1}\sum\limits_{k=d+1}^K\Big(\sum\limits_{t=k}^{\infty} q_t t\Big)\EE\|\vd^d\|^2\cr
[k\leftrightarrow d]\quad=&\textstyle\sum\limits_{k=0}^{K-1}\Big(\sum\limits_{d=k+1}^K\sum\limits_{t=d}^{\infty} q_t t\Big)\EE\|\vd^k\|^2.\label{ns-ineq5}
\end{align}
Summing up~\eqref{ns-ineq3} from $k=0$ through $K$ and substituting~\eqref{ns-ineq4} and~\eqref{ns-ineq5}, we have
\begin{align}\label{ns-ineq6}
   &\textstyle \EE[F(\vx^{K+1})-F(\vx^0)]+\frac{1}{m}\big(\frac{1}{\eta}-{L_c}\big)\sum\limits_{k=0}^K\EE\|\vd^k\|^2 \cr
\le&\textstyle\frac{\kappa L_r}{2m^2}\sum\limits_{k=0}^{K-1}\Big(\sum\limits_{d=k+1}^K\sum\limits_{t=d-k}^{\infty} q_t t\Big)\EE\|\vd^k\|^2.
\end{align}
Note that
$$\textstyle\sum\limits_{d=k+1}^K\sum\limits_{t=d-k}^{\infty} q_t t=\sum\limits_{d=1}^{K-k}\sum\limits_{t=d}^{\infty} q_t t\le\sum\limits_{d=1}^\infty\sum\limits_{t=d}^\infty q_t t=\sum\limits_{t=1}^\infty t^2q_t=S.$$
Since $F$ is lower bounded, we have~\eqref{ns-sqsum} from~\eqref{ns-ineq6} by letting $K\to\infty$.
\qed\end{proof}
Since $\big(\EE[\vj]\big)^2\le \EE[\vj^2]$, the condition $S<\infty$ implies $T<\infty$. Equation~\eqref{ns-sqsum} indicates that $\EE\|\vd^k\|\to0$ as $k\to\infty$. Together with $S<\infty$, we are able to show $\EE\|\vx^k-\vx^{k-j_k}\|$ also approaches \emph{zero}, as summarized in the following.

\begin{lemma}\label{lem:kjk}
Under the assumptions of Lemma \ref{lem:ns-sqsum}, we have
$$\lim_{k\to\infty}\EE\|\vx^k-\vx^{k-j_k}\|=0.$$
\end{lemma}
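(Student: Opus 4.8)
The plan is to write $\vx^k-\vx^{k-j_k}$ as a telescoping sum of one-step differences and then control the expected norm using the square summability result from Lemma \ref{lem:ns-sqsum} together with the bounded second moment $S<\infty$. Concretely, since $\vx^{d+1}-\vx^d=U_{i_d}\vd^d$, we have $\vx^k-\vx^{k-j_k}=\sum_{d=k-j_k}^{k-1}U_{i_d}\vd^d$, so by the triangle inequality $\|\vx^k-\vx^{k-j_k}\|\le\sum_{d=k-j_k}^{k-1}\|\vd^d\|$. The first step is to take expectations: conditioning on $j_k=t$ (which is independent of the $i_d$'s and of the history up to time $k$) and using $\EE\|U_{i_d}\vd^d\|^2=\tfrac{1}{m}\EE\|\vd^d\|^2$, one gets $\EE\|\vx^k-\vx^{k-j_k}\|\le\sum_{t=1}^{\infty}q_t\sum_{d=(k-t)_+}^{k-1}\EE\|\vd^d\|$, where I use $\vx^d=\vx^0$ for $d<0$ so that the tail terms with $t\ge k$ contribute nothing extra.

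The second step is to bound $\sum_{d=k-t}^{k-1}\EE\|\vd^d\|$ for each fixed $t$. Applying Cauchy–Schwarz over the $t$ summands gives $\sum_{d=k-t}^{k-1}\EE\|\vd^d\|\le\sqrt{t}\,\big(\sum_{d=k-t}^{k-1}\EE\|\vd^d\|^2\big)^{1/2}\le\sqrt{t}\,\big(\sum_{d\ge k-t}\EE\|\vd^d\|^2\big)^{1/2}$. Define the tail $R_\ell:=\sum_{d\ge \ell}\EE\|\vd^d\|^2$; by Lemma \ref{lem:ns-sqsum}, $R_0<\infty$ and $R_\ell\to0$ as $\ell\to\infty$, and $R_\ell$ is nonincreasing. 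Plugging in yields
\begin{equation*}
\EE\|\vx^k-\vx^{k-j_k}\|\le\sum_{t=1}^{\infty}q_t\sqrt{t}\,\sqrt{R_{k-t}}\le\Big(\sum_{t=1}^\infty q_t\, t\Big)^{1/2}\Big(\sum_{t=1}^\infty q_t\, R_{k-t}\Big)^{1/2}=\sqrt{T}\,\Big(\sum_{t=1}^\infty q_t\, R_{k-t}\Big)^{1/2},
\end{equation*}
where the middle step is another Cauchy–Schwarz over $t$ with weights $q_t$, and I interpret $R_{k-t}=R_0$ (or just $R_0<\infty$ as an upper bound) when $t\ge k$; note $T<\infty$ since $S<\infty$.

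The final step is to show $\sum_{t=1}^\infty q_t R_{k-t}\to0$ as $k\to\infty$. This is a dominated/split argument: the summand is bounded by $q_t R_0$, which is summable in $t$ (sum equals $R_0$, finite), so by dominated convergence it suffices that for each fixed $t$, $R_{k-t}\to0$ as $k\to\infty$, which holds because $R_\ell\to0$. Alternatively, split at a threshold $t\le N$ versus $t>N$: the tail $\sum_{t>N}q_t R_0\le R_0\sum_{t>N}q_t$ is small for large $N$, and for the head $\sum_{t\le N}q_t R_{k-t}\le R_{k-N}\to0$ as $k\to\infty$ since $R$ is nonincreasing. Either way the bound tends to $0$, giving $\lim_{k\to\infty}\EE\|\vx^k-\vx^{k-j_k}\|=0$. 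The only mild subtlety — and the step I would be most careful about — is the bookkeeping for the terms with delay $t\ge k$ (i.e.\ reading all the way back to $\vx^0$): with the convention $\vx^d=\vx^0$ for $d<0$ these contribute $\sum_{d=0}^{k-1}\|\vd^d\|$, which is handled by the same Cauchy–Schwarz bound $\le\sqrt{k}\sqrt{R_0}$ times $c_k=\sum_{t\ge k}q_t$, and one checks $c_k\sqrt{k}\to0$ using $S=\sum t^2 q_t<\infty$ (indeed $k^2 c_k\le\sum_{t\ge k}t^2q_t\to0$, so $c_k\sqrt k\to0$). Everything else is routine.
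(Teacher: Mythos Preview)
Your proof is correct and follows a route that differs in execution from the paper's, though both rest on the same two ingredients: the square summability $\sum_d \EE\|\vd^d\|^2<\infty$ from Lemma~\ref{lem:ns-sqsum} and the finiteness of the delay moments. The paper instead bounds the \emph{squared} norm first, using $\|\vx^k-\vx^{k-j_k}\|^2\le j_k\sum_{d=k-j_k}^{k-1}\|\vx^{d+1}-\vx^d\|^2$ and the identity~\eqref{diff-zero}, which produces weights $q_t t$; it then carries out an explicit $\epsilon$--$J$--$K$ splitting into three pieces (small tail in $d$, small tail in $t$, and a cross piece) and finishes by $\EE\|\cdot\|\le\sqrt{\EE\|\cdot\|^2}$. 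Your argument works directly on the first moment, replaces the Young-type bound by two Cauchy--Schwarz steps (over $d$ and then over $t$ with weights $q_t$), and replaces the three-piece $\epsilon$ argument by a clean dominated-convergence (or two-piece) argument on $\sum_t q_t R_{k-t}$. Your version is a bit more streamlined and only needs $T<\infty$ for the main bound (the separate $c_k\sqrt{k}\to 0$ check via $S<\infty$ in your last paragraph is in fact unnecessary once you absorb the $t\ge k$ terms into the same Cauchy--Schwarz with $\sqrt{k}\le\sqrt{t}$ and use $c_k\to 0$); the paper's version makes the splitting structure and the role of the weights $q_t t$ more explicit, which mirrors how the same expression~\eqref{diff-zero} is used elsewhere in the analysis.
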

\begin{proof}
Pick any $\epsilon>0$. From~\eqref{ns-sqsum}, there must exist an integer $J>0$ such that
\begin{equation}\label{kjk-ineq1}
\textstyle\sum\limits_{d=J}^\infty\EE\|\vd^d\|^2\le m\epsilon\Big(3\sum\limits_{t=1}^\infty q_t t\Big)^{-1}.
\end{equation}
For the above $J$, there must exist an integer $K>J$ such that, for any $k\ge K$,
\begin{equation}\label{kjk-ineq2}
\textstyle\sum\limits_{t=k-J}^\infty q_t t\le{m\epsilon}{\Big(3\sum\limits_{d=0}^\infty\EE\|\vd^d\|^2\Big)^{-1}}.
\end{equation}
From Young's inequality, it follows that $\|\vx^k-\vx^{k-j_k}\|^2\le j_k\sum_{d=k-j_k}^{k-1}\|\vx^{d+1}-\vx^d\|^2.$
Hence, for any $k\ge K$, using~\eqref{diff-zero} and~\eqref{seq1}, we have
\begin{align*}
\EE\|\vx^k-\vx^{k-j_k}\|^2
\le&\textstyle\frac{1}{m}\Big[\sum\limits_{d=1}^{k-1}\Big(\sum\limits_{t=k-d}^{k-1}q_t t\Big)\EE\|\vd^d\|^2+\sum\limits_{d=0}^{k-1}\Big(\sum\limits_{t=k}^\infty q_t t\Big)\EE\|\vd^d\|^2\Big]\cr
=&\textstyle\frac{1}{m}\sum\limits_{d=1}^{J}\Big(\sum\limits_{t=k-d}^{k-1}q_t t\Big)\EE\|\vd^d\|^2\cr
&\textstyle+\frac{1}{m}\Big[\sum\limits_{d=J+1}^{k-1}\Big(\sum\limits_{t=k-d}^{k-1}q_t t\Big)\EE\|\vd^d\|^2+\sum\limits_{d=0}^{k-1}\Big(\sum\limits_{t=k}^\infty q_t t\Big)\EE\|\vd^d\|^2\Big]\cr
\le &\textstyle\frac{1}{m}\sum\limits_{d=1}^{J}\Big(\sum\limits_{t=k-J}^\infty q_t t\Big)\EE\|\vd^d\|^2\cr
&\textstyle+\frac{1}{m}\Big[\sum\limits_{d=J+1}^{k-1}\Big(\sum\limits_{t=1}^\infty q_t t\Big)\EE\|\vd^d\|^2+\sum\limits_{d=0}^{k-1}\Big(\sum\limits_{t=k-J}^\infty q_t t\Big)\EE\|\vd^d\|^2\Big],
\end{align*}
which implies $\EE\|\vx^k-\vx^{k-j_k}\|^2\le \epsilon$ under~\eqref{kjk-ineq1} and~\eqref{kjk-ineq2}. We have
$\lim_{k\to\infty}\EE\|\vx^k-\vx^{k-j_k}\|^2=0$ as  $\epsilon$ is arbitrary.  Now note
$\EE\|\vx^k-\vx^{k-j_k}\|\le \sqrt{\EE\|\vx^k-\vx^{k-j_k}\|^2}$
to complete the proof.
\qed\end{proof}
Using Lemmas \ref{lem:ns-sqsum} and \ref{lem:kjk}, we  establish the almost sure global convergence of Algorithm \ref{alg:asyn_bcd}.
\begin{theorem}\label{thm:convg-nsm}
Under the assumptions of Lemma \ref{lem:ns-sqsum}, any limit point $\vx^*$ of $\{\vx^k\}$ is a critical point of~\eqref{eq:main} almost surely.
\end{theorem}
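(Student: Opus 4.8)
The plan is to mirror the proof of Theorem~\ref{thm:cvg}, with the gradient optimality condition replaced by the subdifferential inclusion that defines the virtual full update. By definition of the proximal map, $\bar\vx^{k+1}=\prox_{\eta R}\big(\vx^k-\eta\nabla f(\vx^{k-j_k})\big)$ satisfies
\[
-\nabla f(\vx^{k-j_k})-\tfrac1\eta\vd^k\in\partial R(\bar\vx^{k+1}),\qquad \vd^k=\bar\vx^{k+1}-\vx^k .
\]
Hence, if along some subsequence $\bar\vx^{k+1}\to\vx^*$, $\|\vd^k\|\to 0$, and $\vx^{k-j_k}\to\vx^*$, then the left-hand side converges to $-\nabla f(\vx^*)$ by Lipschitz continuity of $\nabla f$, and since the graph of $\partial R$ is closed ($R$ being proper closed convex), we obtain $\vzero\in\nabla f(\vx^*)+\partial R(\vx^*)$; that is, $\vx^*$ is a critical point of~\eqref{eq:main}.

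First I would collect the vanishing quantities. Lemma~\ref{lem:ns-sqsum} and monotone convergence give $\sum_k\|\vd^k\|^2<\infty$ almost surely, hence $\|\vd^k\|\to 0$ a.s.; in particular $\bar\vx^{k+1}=\vx^k+\vd^k$ and $\{\vx^k\}$ have the same limit points a.s. For the delayed term, Lemma~\ref{lem:kjk} yields $\EE\|\vx^k-\vx^{k-j_k}\|\to 0$, so $\|\vx^k-\vx^{k-j_k}\|\to 0$ in probability; I would also observe that the estimate in the proof of Lemma~\ref{lem:kjk} combined with $S<\infty$ in fact gives $\sum_k\EE\|\vx^k-\vx^{k-j_k}\|^2<\infty$, which upgrades this to a.s.\ convergence of the whole sequence.

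Next I would fix a limit point $\vx^*$ of $\{\vx^k\}$ and a subsequence $\cK$ along which $\vx^k\to\vx^*$. On the probability-one event where $\|\vd^k\|\to 0$ and $\|\vx^k-\vx^{k-j_k}\|\to 0$, we then have $\bar\vx^{k+1}\to\vx^*$ and $\vx^{k-j_k}=\vx^k-(\vx^k-\vx^{k-j_k})\to\vx^*$ as $\cK\ni k\to\infty$; inserting these into the displayed inclusion and taking the limit along $\cK$ gives $-\nabla f(\vx^*)\in\partial R(\vx^*)$, which is the claim. If one keeps only the in-probability statement for $\vx^k-\vx^{k-j_k}$ (to stay strictly parallel with Theorem~\ref{thm:cvg}), one first extracts a further subsequence $\cK'\subseteq\cK$ along which $\|\vx^k-\vx^{k-j_k}\|\to 0$ almost surely via~\cite[Theorem 3.4, p.212]{gut2006probability}, and then argues along $\cK'$.

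The main obstacle is the measure-theoretic coordination: a limit point of $\{\vx^k\}$ comes with a random, path-dependent subsequence, while Lemma~\ref{lem:kjk} controls $\vx^k-\vx^{k-j_k}$ only on average, so one must guarantee that $\vx^k\to\vx^*$, $\|\vd^k\|\to 0$, and $\|\vx^k-\vx^{k-j_k}\|\to 0$ all hold along one common subsequence before passing to the limit in the subdifferential inclusion. Strengthening Lemma~\ref{lem:kjk} to square summability, or else the a.s.-subsequence extraction from convergence in probability, is what closes this gap; the remaining ingredient---closedness of the graph of $\partial R$ under joint convergence of base points and subgradients---is routine and follows from lower semicontinuity of $R$.
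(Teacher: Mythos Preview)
Your argument is correct and is in fact a bit cleaner than the paper's, though the two proofs share the same skeleton.

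The paper does not pass to the limit directly in the inclusion $-\nabla f(\vx^{k-j_k})-\tfrac1\eta\vd^k\in\partial R(\bar\vx^{k+1})$. Instead it first bounds the whole quantity $\dist\big(\vzero,\partial F(\bar\vx^{k+1})\big)$ by $L_f\|\vd^k\|+L_f\|\vx^k-\vx^{k-j_k}\|+\tfrac1\eta\|\vd^k\|$, takes expectations, and uses Lemmas~\ref{lem:ns-sqsum} and~\ref{lem:kjk} to show $\EE\,\dist\big(\vzero,\partial F(\bar\vx^{k+1})\big)\to 0$. It then applies Markov's inequality and the a.s.\ subsequence extraction from~\cite[Theorem 3.4, p.212]{gut2006probability} \emph{twice} (once for $\|\vd^k\|$, once for the distance), and finally invokes outer semicontinuity of $\vx\mapsto\dist(\vzero,\partial F(\vx))$ at the limit point.

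Your route differs in two ways. First, you upgrade both vanishing statements to full-sequence almost-sure convergence: $\sum_k\EE\|\vd^k\|^2<\infty$ gives $\|\vd^k\|\to 0$ a.s.\ by monotone convergence, and your observation that the estimates behind Lemma~\ref{lem:kjk} (namely~\eqref{diff-zero} together with the summation identities~\eqref{ns-ineq4}--\eqref{ns-ineq5}) actually yield $\sum_k\EE\|\vx^k-\vx^{k-j_k}\|^2\le \tfrac{S}{m}\sum_k\EE\|\vd^k\|^2<\infty$ is correct and gives $\|\vx^k-\vx^{k-j_k}\|\to 0$ a.s. Second, you conclude by closedness of the graph of $\partial R$ rather than outer semicontinuity of $\partial F$. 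The payoff is that no iterated subsequence extraction is needed and the measure-theoretic coordination issue you flag simply disappears: the probability-one event on which both sequences vanish is deterministic in $k$, so any (possibly random) subsequence $\cK$ realizing the limit point works. The paper's approach, on the other hand, stays closer to the ``in expectation $\to$ in probability $\to$ a.s.\ along a subsequence'' template used in Theorem~\ref{thm:cvg} and does not require noticing the square-summability of $\|\vx^k-\vx^{k-j_k}\|$.
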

Before proving this theorem, we make two remarks as follows.
\begin{remark}
From the theorem, we see that if $S=\EE[\vj^2]=o(m)$, then the  stepsize required for convergence only weakly depends on the delay.
\end{remark}

\begin{remark}[Comparison of stepsize]
The works~\cite{davis2016asynchronous} consider asynchronous coordinate descent for nonconvex problems. To have convergence to critical points, they assume delays bounded by a number $\tau$. Also, they require the boundedness of iterates and the stepsize less than $\frac{1/L_c}{1+2\kappa\tau/\sqrt{m}}$. Note that our stepsize in Theorem~\ref{thm:convg-nsm} is larger if $\kappa^2 S\le 16m$, where $S=\EE[\vj^2]<\tau^2$, and that can lead to faster convergence.
\end{remark}

\begin{proof}
Let $\{\vx^k\}_{k\in\cK}$ be a subsequence that converges to $\vx^*$. Since $\EE\|\vd^k\|\to 0$ as $\cK\ni k\to\infty$, from the Markov inequality, $\|\vd^k\|$ converges to \emph{zero} in probability as $\cK\ni k\to\infty$. By~\cite[Theorem 3.4, pp.212]{gut2006probability}, there is a subsubsequence $\{\vx^k\}_{k\in\cK'}$ such that $\|\vd^k\|$ almost surely converges to \emph{zero} as $\cK'\ni k\to\infty$. Hence, $\bar{\vx}^{k+1}$ almost surely converges to $\vx^*$ as $\cK'\ni k\to\infty$.

Since $-\nabla f(\vx^{k-j_k})-\frac{1}{\eta}\vd^k\in\partial R(\bar{\vx}^{k+1})$, we have
$$\text{dist}\big(\vzero,\partial F(\bar{\vx}^{k+1})\big)\le \|\nabla f(\bar{\vx}^{k+1})-\nabla f(\vx^{k-j_k})-\tfrac{1}{\eta}\vd^k\|.$$
Using triangle inequality and the Lipschitz continuity of $\nabla f$, and taking expectation give
$$\EE\text{dist}\big(\vzero,\partial F(\bar{\vx}^{k+1})\big)\le L_f \EE\|\vd^k\|+L_f\EE\|\vx^k-\vx^{k-j_k}\|+\tfrac{1}{\eta}\EE\|\vd^k\|.$$
From Lemmas \ref{lem:ns-sqsum} and \ref{lem:kjk}, it follows that the right-hand side  approaches to \emph{zero} as $k\to \infty$. Hence, $\EE\text{dist}\big(\vzero,\partial F(\bar{\vx}^{k+1})\big)\to 0$ as $k\to\infty$.
If necessary, passing to another subsequence, we use Markov inequality and~\cite[Theorem 3.4, pp.212]{gut2006probability} again to have $\text{dist}\big(\vzero,\partial F(\bar{\vx}^{k+1})\big)$ almost surely converges to \emph{zero} as $\cK'\ni k\to\infty$. Now use the outer semicontinuity~\cite{rockafellar2009variational} of $\text{dist}\big(\vzero,\partial F(\vx)\big)$ to obtain the desired result.
\hfill\qed\end{proof}

\subsection{Convergence rate for the convex case} In this subsection, we establish convergence rates of Algorithm \ref{alg:asyn_bcd} for nonsmooth convex cases. Similar to~\eqref{eq:bdgrad}, we first show that choosing an appropriate stepsize, the iterate difference does not change too fast.

\begin{lemma}[Fundamental bounds]\label{lem:nsm-dec}
Assume Assumptions \ref{assump-fun} through \ref{assump2}. Then for any $1<\rho<\sigma$, it holds that
\begin{equation}\label{def-gamma}
\gamma_{\rho, 1}:=\sum_{t=1}^\infty q_t \tfrac{\rho^{t/2}-1}{\rho^{1/2}-1}<\infty 
\quad\text{and}\quad
\gamma_{\rho,2}:=\Big(\sum_{t=1}^\infty q_t t\tfrac{\rho^{t}-1}{1-\rho^{-1}}\Big)^{1/2}<\infty.
\end{equation}
In addition, if the stepsize is taken such that
\begin{equation}\label{ns-rate-eta}
\textstyle 0<\eta\le\frac{(1-\rho^{-1})\sqrt{m}-4}{2L_r(1+\gamma_{\rho,1}+\gamma_{\rho,2})},
\end{equation}
then, for all $k\ge1$,
\begin{equation}\label{dec-xbar}
\EE\|\vd^{k-1}\|^2\le \rho\EE\|\vd^k\|^2.
\end{equation}
\end{lemma}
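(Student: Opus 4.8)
The statement has three ingredients: the finiteness of $\gamma_{\rho,1}$ and $\gamma_{\rho,2}$, and the one-step comparison \eqref{dec-xbar}. The first two are quick, and I would dispatch them as follows. Both constants are dominated by the moment $M_\sigma=\EE[\sigma^{\vj}]$ of Assumption~\ref{assump2}. Since $\rho>1$ we have $\rho^{t/2}\le\rho^{t}$, so $\gamma_{\rho,1}\le\frac{1}{\rho^{1/2}-1}\sum_{t\ge1}q_t\rho^{t}\le\frac{M_\sigma}{\rho^{1/2}-1}<\infty$ because $\rho<\sigma$; and $\gamma_{\rho,2}^{2}\le\frac{1}{1-\rho^{-1}}\sum_{t\ge1}q_t t\rho^{t}$, which is finite because $\rho<\sigma$ makes $t\rho^{t}$ bounded by a constant multiple of $\sigma^{t}$. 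In particular \eqref{ns-rate-eta} does define a nonempty interval of stepsizes once $m$ is large enough that $(1-\rho^{-1})\sqrt m>4$.

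For \eqref{dec-xbar} I would argue by induction on $k$, the hypothesis at stage $k$ being that $\EE\|\vd^{d-1}\|^{2}\le\rho\,\EE\|\vd^{d}\|^{2}$ for all $1\le d\le k-1$; iterating this gives $\EE\|\vd^{d}\|^{2}\le\rho^{\,k-1-d}\EE\|\vd^{k-1}\|^{2}$ for every $d\le k-1$ (using the convention $\vx^{k}=\vx^{0}$, hence $\vd^{d}=\vd^{0}$, for $d\le0$, which also covers the base case $k=1$ where all indices arising below are $\le0$). The starting point is the identity coming from $\vx^{k}=\vx^{k-1}+U_{i_{k-1}}\vd^{k-1}$ and the definitions of $\bar{\vx}^{k},\bar{\vx}^{k+1}$:
$$(I-U_{i_{k-1}})\vd^{k-1}=\bar{\vx}^{k}-\vx^{k}=\vd^{k}-(\bar{\vx}^{k+1}-\bar{\vx}^{k}).$$
Taking norms, using $\|\vd^{k}\|\ge0$ and $\|(I-U_{i_{k-1}})\vd^{k-1}\|\le\|\vd^{k-1}\|$ in the cross term, then taking expectations and using $\EE[\,\|(I-U_{i_{k-1}})\vd^{k-1}\|^{2}\mid\text{history before }i_{k-1}\,]=(1-\tfrac1m)\|\vd^{k-1}\|^{2}$ yields
$$\EE\|\vd^{k}\|^{2}\ \ge\ \big(1-\tfrac1m\big)\EE\|\vd^{k-1}\|^{2}-2\,\EE\big[\|\vd^{k-1}\|\,\|\bar{\vx}^{k+1}-\bar{\vx}^{k}\|\big].$$

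It then remains to absorb the cross term into an $O\!\big(\tfrac1{\sqrt m}+\eta L_r(1+\gamma_{\rho,1}+\gamma_{\rho,2})\big)\EE\|\vd^{k-1}\|^{2}$. By nonexpansiveness of $\prox_{\eta R}$,
$$\|\bar{\vx}^{k+1}-\bar{\vx}^{k}\|\le\|U_{i_{k-1}}\vd^{k-1}\|+\eta\,\|\nabla f(\vx^{k-j_k})-\nabla f(\vx^{k-1-j_{k-1}})\|,$$
and I would split the gradient difference through $\vx^{k}$ and $\vx^{k-1}$, telescoping each piece along the one-block-at-a-time path exactly as in the proof of Lemma~\ref{lem:thm-cvg-lem2} and using \eqref{lip-cd}: this bounds it by $L_r\|\vd^{k-1}_{i_{k-1}}\|$ (the step $\vx^{k-1}\!\to\!\vx^{k}$) plus $L_r\sum_{d=k-j_k}^{k-1}\|\vd^{d}\|$ (the read at step $k$) plus $L_r\sum_{d=k-1-j_{k-1}}^{k-2}\|\vd^{d}\|$ (the read at step $k-1$). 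Multiplying by $\|\vd^{k-1}\|$ and taking expectations, the first two pieces are controlled by Cauchy--Schwarz over the $m$ blocks ($\EE_{i_{k-1}}\|\vd^{k-1}_{i_{k-1}}\|\le\tfrac1{\sqrt m}\|\vd^{k-1}\|$) and by term-by-term Cauchy--Schwarz in the probability space together with the inductive bound $\EE\|\vd^{d}\|^{2}\le\rho^{k-1-d}\EE\|\vd^{k-1}\|^{2}$; since $j_k$ is independent of the history through step $k-1$ (Assumption~\ref{assump-read}), the delay-weighted geometric sum $\sum_{t}q_t\sum_{s=0}^{t-1}\rho^{s/2}=\gamma_{\rho,1}$ emerges. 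For the third piece $j_{k-1}$ is correlated with $\vd^{k-1}$, so I would instead apply Cauchy--Schwarz to the whole increment, bound $\big(\sum_{d=k-1-j_{k-1}}^{k-2}\|\vd^d\|\big)^{2}\le j_{k-1}\sum_{d=k-1-j_{k-1}}^{k-2}\|\vd^d\|^{2}$, and—the remaining indices $d\le k-2$ now being independent of $j_{k-1}$—use the inductive bound again; the weighted sum $\sum_t q_t t\sum_{s=1}^{t}\rho^{s}$ under a square root is exactly $\gamma_{\rho,2}$. Collecting terms gives $\EE\|\vd^{k}\|^{2}\ge\big(1-\tfrac1m-\tfrac{c_1}{\sqrt m}-c_2\,\eta L_r(1+\gamma_{\rho,1}+\gamma_{\rho,2})\big)\EE\|\vd^{k-1}\|^{2}$ for absolute constants $c_1,c_2$, and the choice \eqref{ns-rate-eta} makes the bracket at least $\rho^{-1}$, which is \eqref{dec-xbar} at stage $k$ and closes the induction.

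The conceptual content is light, but the main obstacle is the bookkeeping in this cross-term estimate: one must track precisely which $\sigma$-field each quantity is measurable with respect to (so the conditional expectations over $i_{k-1}$ and over the two delays factor correctly), correctly telescope a gradient difference whose endpoints $k-j_k$ and $k-1-j_{k-1}$ are both random and may lie on either side of $k-1$, and perform the summation-order exchanges so that the two different applications of Cauchy--Schwarz produce \emph{exactly} the constants $\gamma_{\rho,1},\gamma_{\rho,2}$ and the threshold in \eqref{ns-rate-eta}. This is presumably why the full computation is deferred to the appendix and parallels \cite{liu2014asynchronous} and the proof of Lemma~\ref{lem:bdgrad}.
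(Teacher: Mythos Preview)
Your overall strategy---induction on $k$, bounding the prox difference via nonexpansiveness, splitting $\nabla f(\vx^{k-j_k})-\nabla f(\vx^{k-1-j_{k-1}})$ through $\vx^k$ and $\vx^{k-1}$, treating the $j_k$-piece by termwise Cauchy--Schwarz (where $j_k$ is independent of $\vd^{k-1}$) and the $j_{k-1}$-piece by a global Cauchy--Schwarz---matches the paper's argument. Your starting identity $(I-U_{i_{k-1}})\vd^{k-1}=\vd^k-(\bar{\vx}^{k+1}-\bar{\vx}^k)$ is a slightly different (and arguably cleaner) entry point than the paper's inequality $\|\vu\|^2-\|\vv\|^2\le 2\|\vu\|\,\|\vu-\vv\|$ applied to $\vu=\vd^{k-1}$, $\vv=\vd^k$; either one works.

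There is, however, a real gap in the bookkeeping that makes the induction fail as you have written it. When you telescope the delayed gradient differences you write $L_r\sum_d\|\vd^d\|$, but the one-step increments along the path are $\|\vx^{d+1}-\vx^d\|=\|U_{i_d}\vd^d\|$, not $\|\vd^d\|$. It is precisely the identity $\EE\|\vx^{d+1}-\vx^d\|^2=\tfrac1m\EE\|\vd^d\|^2$ (averaging over the single selected block $i_d$) that produces a factor $1/\sqrt m$ in front of each of the $\gamma_{\rho,1}$ and $\gamma_{\rho,2}$ contributions; see the paper's inequalities \eqref{ineq2-dx}, \eqref{ineq5-dx}, \eqref{ineq6-1-dx}, \eqref{ineq6-2-dx}, all of which carry this $1/\sqrt m$. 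Your stated collected bound
\[
\EE\|\vd^{k}\|^{2}\ \ge\ \Big(1-\tfrac{1}{m}-\tfrac{c_1}{\sqrt m}-c_2\,\eta L_r(1+\gamma_{\rho,1}+\gamma_{\rho,2})\Big)\EE\|\vd^{k-1}\|^{2}
\]
is therefore too weak: the stepsize condition \eqref{ns-rate-eta} only gives $2\eta L_r(1+\gamma_{\rho,1}+\gamma_{\rho,2})\le(1-\rho^{-1})\sqrt m-4$, which is $O(\sqrt m)$ and drives your bracket to $-\infty$. The bracket you need is $1-\tfrac{4+2\eta L_r(1+\gamma_{\rho,1}+\gamma_{\rho,2})}{\sqrt m}$, and it is \emph{this} form that \eqref{ns-rate-eta} makes $\ge\rho^{-1}$. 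The fix is exactly what your final paragraph hints at but does not execute: carry the single-block increments $\|\vx^{d+1}-\vx^d\|$ through the telescope and the Young/Cauchy--Schwarz steps, not the full virtual step $\|\vd^d\|$.
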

\begin{proof} It is easy to show~\eqref{def-gamma} by noting that $t\rho^t$ is dominated by $\sigma^t$ as $t$ is sufficiently large. Next we show~\eqref{dec-xbar} by induction.

Using the inequality $\|\vu\|^2-\|\vv\|^2\le 2\|\vu\|\cdot\|\vv-\vu\|$, we have
\begin{equation}\label{ineq1-dx}
\|\vd^{k-1}\|^2-\|\vd^{k}\|^2\le 2\|\vd^{k-1}\|\cdot\|\vd^{k}-\vd^{k-1}\|,\,\forall k.
\end{equation}
In addition, for all $k$,
\begin{align}\label{ineq2-dx}
\EE\|\vx^{k-1}-\vx^k\|\|\vd^{k-1}\|
\le&\tfrac{1}{2}\EE\left[\sqrt{m}\|\vx^{k-1}-\vx^k\|^2+\tfrac{1}{\sqrt{m}}\|\vd^{k-1}\|^2\right] \cr
=&\tfrac{1}{\sqrt{m}}\EE\|\vd^{k-1}\|^2.
\end{align}
Furthermore, from $\vd^{k}-\vd^{k-1}=\vx^k-\prox_{\eta R}\big(\vx^k-\eta\nabla f(\vx^{k-j_k})\big)-\vx^{k-1}+\prox_{\eta R}\big(\vx^{k-1}-\eta\nabla f(\vx^{k-1-j_{k-1}})\big)$, the nonexpansiveness of $\prox_{\eta R}$, and the triangle inequality, we have
\begin{align}
   &\|\vd^{k}-\vd^{k-1}\|\, \cr
\le&\|\vx^k-\vx^{k-1}\|+\|\vx^k-\eta\nabla f(\vx^{k-j_k})-\vx^{k-1}+\eta\nabla f(\vx^{k-1-j_{k-1}})\|\nonumber\\
\le& 2\|\vx^k-\vx^{k-1}\|+\eta\|\nabla f(\vx^{k-j_k})-\nabla f(\vx^{k-1-j_{k-1}})\|\label{ineq3-dx}\\
\le& 2\|\vx^k-\vx^{k-1}\|+\eta\|\nabla f(\vx^{k-j_k})-\nabla f(\vx^{k})\| \cr
   & +\eta\|\nabla f(\vx^{k})-\nabla f(\vx^{k-1-j_{k-1}})\|.\label{ineq4-dx}
\end{align}
When $k=1$, we have $j_0=0$ and $j_1\in\{0, 1\}$ because $j_k\le k,\,\forall k$. Hence, from~\eqref{ineq3-dx}, 
$$\|\vd^1-\vd^0\|\le 2\|\vx^1-\vx^0\|+\eta\|\nabla f(\vx^1)-\nabla f(\vx^0)\|\le (2+\eta L_r)\|\vx^1-\vx^0\|,$$
which together with~\eqref{ineq1-dx} and~\eqref{ineq2-dx} implies
$$\textstyle\EE\big[\|\vd^0\|^2-\|\vd^1\|^2\big]\le (4+2\eta L_r)\EE\big[\|\vd^0\|\cdot\|\vx^0-\vx^1\|\big]\le\frac{4+2\eta L_r}{\sqrt{m}}\EE \|\vd^0\|^2.$$
Hence,
$$\textstyle\EE\|\vd^0\|^2\le\big(1-\frac{4+2\eta L_r}{\sqrt{m}}\big)^{-1}\EE\|\vd^1\|^2\overset{(\ref{ns-rate-eta})}\le \rho \EE\|\vd^1\|^2.$$
Assume~\eqref{dec-xbar} holds for all $k\le K-1$. We show it holds for $k=K$. First, for any $d\le K-1$,
\begin{align}\label{ineq5-dx}
\EE\|\vd^{K-1}\|\cdot\|\vx^d-\vx^{d+1}\|
\le&\textstyle\frac{1}{2}\EE\Big[\frac{\rho^{\frac{K-1-d}{2}}}{\sqrt{m}}\|\vd^{K-1}\|^2+\frac{\sqrt{m}}{\rho^{\frac{K-1-d}{2}}}\|\vx^d-\vx^{d+1}\|^2\big]\cr
=&\textstyle\frac{1}{2}\EE\Big[\frac{\rho^{\frac{K-1-d}{2}}}{\sqrt{m}}\|\vd^{K-1}\|^2+\frac{1}{\sqrt{m}\rho^{\frac{K-1-d}{2}}}\|\vd^d\|^2\Big]\cr
\le &\textstyle\frac{1}{2}\EE\Big[\frac{\rho^{\frac{K-1-d}{2}}}{\sqrt{m}}\|\vd^{K-1}\|^2+\frac{\rho^{K-1-d}}{\sqrt{m}\rho^{\frac{K-1-d}{2}}}\|\vd^{K-1}\|^2\Big]\cr
=&\textstyle\frac{\rho^{\frac{K-1-d}{2}}}{\sqrt{m}}\EE\|\vd^{K-1}\|^2.
\end{align}
Secondly, we have
\begin{align}\label{ineq6-dx}
&\textstyle\EE\big[\|\vd^{K-1}\|^2-\|\vd^{K}\|^2\big] 
\overset{(\ref{ineq1-dx})} \le 2\EE\|\vd^{K-1}\|\|\vd^{K}-\vd^{K-1}\|\cr
\overset{(\ref{ineq4-dx})}\le & 4\EE \|\vd^{K-1}\|\|\vx^K-\vx^{K-1}\| +2\eta\EE \|\vd^{K-1}\|\|\nabla f(\vx^{K})-\nabla f(\vx^{K-1})\|  \cr
\textstyle &+2\eta\EE \|\vd^{K-1}\|\|\nabla f(\vx^{K-j_K})-\nabla f(\vx^{K})\| \cr
\textstyle &+2\eta\EE \|\vd^{K-1}\|\|\nabla f(\vx^{K-1})-\nabla f(\vx^{K-1-j_{K-1}})\| \cr
\overset{(\ref{ineq2-dx})}\le &\tfrac{4+2\eta L_r}{\sqrt{m}}\EE\|\vd^{K-1}\|^2+2\eta\EE \|\vd^{K-1}\|\|\nabla f(\vx^{K-j_K})-\nabla f(\vx^{K})\|\cr
\textstyle&+2\eta\EE \|\vd^{K-1}\|\|\nabla f(\vx^{K-1})-\nabla f(\vx^{K-1-j_{K-1}})\|.
\end{align}
Note that
\begin{align*}
\textstyle \EE_{j_K}\|\nabla f(\vx^{K-j_K})-\nabla f(\vx^{K})\|=&\textstyle\sum\limits_{t=1}^{K-1}q_t\|\nabla f(\vx^{K-t})-\nabla f(\vx^{K})\|\\
&+c_K\|\nabla f(\vx^0)-\nabla f(\vx^{K})\|.
\end{align*}
By the triangle inequality and  the Lipschitz  of $\nabla f$, it follows that, for any $1\le t\le K$,
\begin{align}\label{eq:tri-lip-ineq}\textstyle\|\nabla f(\vx^{K-t})-\nabla f(\vx^{K})\|\le &\textstyle\sum_{d=K-t}^{K-1}\|\nabla f(\vx^d)-\nabla f(\vx^{d+1})\| \cr
\le &\textstyle L_r \sum_{d=K-t}^{K-1}\|\vx^d-\vx^{d+1}\|.
\end{align}
Since $\|\vd^{K-1}\|$ is independent of $j_K$, we have from the above two equations that
\begin{align*}
   & \EE \|\vd^{K-1}\|\cdot\|\nabla f(\vx^{K-j_K})-\nabla f(\vx^{K})\| \\
\le &\textstyle L_r\sum\limits_{t=1}^{K-1}q_t\EE\|\vd^{K-1}\|\sum\limits_{d=K-t}^{K-1}\|\vx^d-\vx^{d+1}\|
    +L_r c_K\EE\|\vd^{K-1}\|\sum\limits_{d=0}^{K-1}\|\vx^d-\vx^{d+1}\|.
\end{align*}
Using~\eqref{ineq5-dx},  the definition of $\gamma_{\rho,1}$ in~\eqref{def-gamma} and
$\textstyle\sum_{d=K-t}^{K-1} \rho^{\frac{K-1-d}{2}}=\frac{\rho^{t/2}-1}{\rho^{1/2}-1},\,\forall 1\le t\le K,$
we have
\begin{align}\label{ineq6-1-dx}
\EE \|\vd^{K-1}\|\|\nabla f(\vx^{K-j_K})-\nabla f(\vx^{K})\|
\le \tfrac{L_r}{\sqrt{m}}\gamma_{\rho,1}\EE \|\vd^{K-1}\|^2.
\end{align}
Also, using  Young's inequality and~\eqref{eq:tri-lip-ineq} with $K$ replaced by $K-1$ and $t=j_{K-1}$, we have, for any $\beta>0$,
\begin{align}\label{eq:tri-lip-ineq-2}
&\EE \|\vd^{K-1}\|\|\nabla f(\vx^{K-1})-\nabla f(\vx^{K-1-j_{K-1}})\|\cr
&\le  \textstyle \frac{L_r}{2\beta}\EE \|\vd^{K-1}\|^2+\frac{L_r\beta}{2}\EE\Big[\sum\limits_{d=K-1-j_{K-1}}^{K-2}\|\vx^{d} - \vx^{d+1}\|\Big]^2.
\end{align}
Note that
\begin{align*}
    &\textstyle\EE\Big[\sum\limits_{d=K-1-j_{K-1}}^{K-2}\|\vx^{d} - \vx^{d+1}\|\Big]^2\cr
=   &\textstyle\sum\limits_{t=1}^{K-2}q_t\EE\Big[\sum\limits_{d=K-1-t}^{K-2}\|\vx^{d} - \vx^{d+1}\|\Big]^2+c_{K-1}\EE\Big[\sum\limits_{d=0}^{K-2}\|\vx^{d} - \vx^{d+1}\|\Big]^2\cr
\le &\textstyle\sum\limits_{t=1}^{K-2}q_t t\sum\limits_{d=K-1-t}^{K-2}\EE\|\vx^{d} - \vx^{d+1}\|^2+c_{K-1}(K-1)\sum\limits_{d=0}^{K-2}\EE\|\vx^{d} - \vx^{d+1}\|^2.
\end{align*}
Substituting this inequality into~\eqref{eq:tri-lip-ineq-2}, noting $\EE\|\vx^d-\vx^{d+1}\|^2=\frac{1}{m}\EE\|\vd^d\|^2$, and  applying~\eqref{dec-xbar}  for all $k\le K-1$, we have
\begin{align*}
\EE \|\vd^{K-1}\|\|\nabla f(\vx^{K-1})-\nabla f(\vx^{K-1-j_{K-1}})\| \le C\EE \|\vd^{K-1}\|^2
\end{align*}
where $C= \frac{L_r}{2\beta}+\frac{L_r\beta}{2m}\sum\limits_{t=1}^{K-2}q_t t\sum\limits_{d=K-1-t}^{K-2}\rho^{K-1-d}+\frac{L_r\beta}{2m}c_{K-1}(K-1)\sum\limits_{d=0}^{K-2}\rho^{K-1-d}$.
Now let $\beta={\sqrt{m}}{\big(\sum_{t=1}^{K-2}q_tt\frac{\rho^{t}-1}{1-\rho^{-1}}+c_{K-1}(K-1)\frac{\rho^{K-1}-1}{1-\rho^{-1}}\big)^{-1/2}}$ and recall the definition of $\gamma_{\rho,2}$ in~\eqref{def-gamma}. From the above inequality, we have
\begin{align}\label{ineq6-2-dx}\textstyle
\EE \|\vd^{K-1}\|\|\nabla f(\vx^{K-1})-\nabla f(\vx^{K-1-j_{K-1}})\|
\le  \frac{L_r\gamma_{\rho,2}}{\sqrt{m}}\EE \|\vd^{K-1}\|^2.
\end{align}
Substituting~\eqref{ineq6-1-dx} and~\eqref{ineq6-2-dx} into~\eqref{ineq6-dx} gives
$$\textstyle\EE\big[\|\vd^{K-1}\|^2-\|\vd^{K}\|^2\big]\le\frac{4+2\eta L_r(1+\gamma_{\rho,1}+\gamma_{\rho,2})}{\sqrt{m}}\EE \|\vd^{K-1}\|^2,$$
and thus
$$\textstyle \EE \|\vd^{K-1}\|^2\le \left(1-\frac{4+2\eta L_r(1+\gamma_{\rho,1}+\gamma_{\rho,2})}{\sqrt{m}}\right)^{-1}\EE\|\vd^{K}\|^2\overset{(\ref{ns-rate-eta})}\le \rho \EE\|\vd^{K}\|^2.$$
Therefore, by induction, it follows that~\eqref{dec-xbar} holds for all $k$, and we complete the proof.\qed\end{proof}
By this lemma, we are able to establish the convergence rate result of Algorithm \ref{alg:asyn_bcd} for solving~\eqref{eq:main} if the problem is convex.

\begin{theorem}\label{thm:ns-rate}\textbf{\emph{(Convergence rate for the nonsmooth convex case)}} Under Assumptions \ref{assump-sol} through \ref{assump2}, let $\{\vx^k\}_{k\ge1}$ be the sequence generated from Algorithm \ref{alg:asyn_bcd} with stepsize satisfying~\eqref{ns-rate-eta} and also
\begin{equation}\label{ns-eta-rate2}
\textstyle\eta\le \Big(L_c+\frac{2 L_f \gamma^2_{\rho,2}}{m}+\frac{2 L_r\gamma_{\rho,2}}{\sqrt{m}}\Big)^{-1},
\end{equation}
where $\gamma_{\rho,1}$ and $\gamma_{\rho,2}$ are defined in~\eqref{def-gamma}. We have
\begin{enumerate}
\item
If the function $F$ is convex, then
\begin{align}\label{ineq4-thm-rate}
\textstyle \EE[F(\vx^{k})-F^*]
\le \frac{m\Phi(\vx^0)}{2\eta(m+k)},
\end{align}
where
$$\textstyle\Phi(\vx^k)=\EE\left\|\vx^{k}-\cP_{X^*}(\vx^{k})\right\|^2+2\eta\EE[F(\vx^{k})-F^*].$$

\item If $F$ is strongly convex with constant $\mu$, 
then
\begin{align}\label{thm-linear-rate}
\textstyle\Phi(\vx^k)\le \textstyle \left(1-\frac{\eta\mu}{m(1+\eta\mu)}\right)^{k} \Phi(\vx^0).
\end{align}
\end{enumerate}
\end{theorem}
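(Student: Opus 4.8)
The plan is to build a one–step Lyapunov recursion for $\Phi(\vx^k)=\EE\|\vx^k-\cP_{X^*}(\vx^k)\|^2+2\eta\,\EE[F(\vx^k)-F^*]$ and then read off the two rates from it. Fix $\vx^*\in X^*$. From the prox-optimality $-\nabla f(\vx^{k-j_k})-\tfrac1\eta\vd^k\in\partial R(\bar\vx^{k+1})$ and the strong convexity of the map $\vx\mapsto R(\vx)+\langle\nabla f(\vx^{k-j_k}),\vx\rangle+\tfrac{1}{2\eta}\|\vx-\vx^k\|^2$, the standard three–point prox inequality evaluated at $\vx=\vx^*$ gives
\[
F(\bar\vx^{k+1})-F^*\le \tfrac{1}{2\eta}\|\vx^k-\vx^*\|^2-\tfrac{1}{2\eta}\|\bar\vx^{k+1}-\vx^*\|^2-\tfrac{1}{2\eta}\|\vd^k\|^2+E_k,
\]
where $E_k$ gathers the delay error: writing $\bar\vx^{k+1}-\vx^*=\vd^k+(\vx^k-\vx^{k-j_k})+(\vx^{k-j_k}-\vx^*)$, one bounds the middle block by telescoping $\nabla f(\vx^k)-\nabla f(\vx^{k-j_k})$ over the single-block steps $\vx^d\to\vx^{d+1}$ with constant $L_r$ (as in Lemma~\ref{lem:thm-cvg-lem2}) plus Young's inequality, and treats $\vx^{k-j_k}-\vx^*$ by convexity of $f$; thus $E_k$ is a linear combination of $\|\vd^k\|^2$, $\|\vx^k-\vx^{k-j_k}\|^2$ and the $\|\vx^{d+1}-\vx^d\|^2$, $d<k$, with coefficients proportional to $\eta L_r$ and $\eta L_f$.

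Next I would pass from the virtual full update $\bar\vx^{k+1}$ to the actual iterate $\vx^{k+1}=\vx^k+U_{i_k}\vd^k$. Since $\vx^{k+1}$ agrees with $\bar\vx^{k+1}$ on block $i_k$ and with $\vx^k$ elsewhere, $\EE_{i_k}\|\vx^{k+1}-\vx^*\|^2=(1-\tfrac1m)\|\vx^k-\vx^*\|^2+\tfrac1m\|\bar\vx^{k+1}-\vx^*\|^2$, and, using the block descent inequality~\eqref{lip-ineq}, convexity, and separability of $R$, one gets $\EE_{i_k}F(\vx^{k+1})\le(1-\tfrac1m)F(\vx^k)+\tfrac1m F(\bar\vx^{k+1})+\tfrac{L_c}{2m}\|\vd^k\|^2$. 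Substituting $\|\bar\vx^{k+1}-\vx^*\|^2=m\,\EE_{i_k}\|\vx^{k+1}-\vx^*\|^2-(m-1)\|\vx^k-\vx^*\|^2$ into the one-step estimate, combining with this $F$-estimate so that the $(1-\tfrac1m)$ factors reassemble into $\Phi$, taking full expectation, and using $\|\vx^{k+1}-\cP_{X^*}(\vx^{k+1})\|\le\|\vx^{k+1}-\cP_{X^*}(\vx^k)\|$, yields
\[
\Phi(\vx^{k+1})\le \Phi(\vx^k)-\tfrac{2\eta}{m}\EE[F(\vx^k)-F^*]-\tfrac1m(1-\eta L_c)\EE\|\vd^k\|^2+\tfrac{\eta}{m}\EE\widehat E_k,
\]
with $\widehat E_k$ of the same type as $E_k$.

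The main work — and the step I expect to be the hardest — is absorbing $\widehat E_k$ so that the net coefficient of $\EE\|\vd^k\|^2$ is non-positive under~\eqref{ns-rate-eta}--\eqref{ns-eta-rate2}. For this I would iterate the bound $\EE\|\vd^{d}\|^2\le\rho^{\,k-d}\EE\|\vd^k\|^2$ from Lemma~\ref{lem:nsm-dec}; combined with~\eqref{diff-zero} and $\sum_{d=k-t}^{k-1}\rho^{k-d}=\rho\,\tfrac{\rho^{t}-1}{\rho-1}$ this gives $\EE\|\vx^k-\vx^{k-j_k}\|^2\le\tfrac{\gamma_{\rho,2}^2}{m}\EE\|\vd^k\|^2$, and an analogous computation for the $\nabla f(\vx^k)-\nabla f(\vx^{k-j_k})$ cross term produces the $\gamma_{\rho,1},\gamma_{\rho,2}$ factors. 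Feeding these into $\widehat E_k$ and choosing $\eta$ as in~\eqref{ns-rate-eta}--\eqref{ns-eta-rate2} cancels all $\|\vd^k\|^2$ contributions, leaving the clean recursion $\Phi(\vx^{k+1})\le\Phi(\vx^k)-\tfrac{2\eta}{m}\EE[F(\vx^k)-F^*]$. Running the same bounds through~\eqref{ns-ineq1} (cross term via~\eqref{eq:pf-lem:ns-sqsum-eq2}) with Lemma~\ref{lem:nsm-dec} also shows $\EE[F(\vx^{k+1})]\le\EE[F(\vx^k)]$, i.e.\ $\EE[F(\vx^k)-F^*]$ is non-increasing.

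Finally I would harvest the two rates. For part~(1), since $\Phi\ge0$, telescoping gives $\tfrac{2\eta}{m}\sum_{j=0}^{k-1}\EE[F(\vx^j)-F^*]\le\Phi(\vx^0)-\Phi(\vx^k)$; using the monotonicity of $\EE[F(\vx^j)-F^*]$ and the trivial bound $\Phi(\vx^k)\ge 2\eta\,\EE[F(\vx^k)-F^*]$, this rearranges to $2\eta\,\EE[F(\vx^k)-F^*]\big(\tfrac km+1\big)\le\Phi(\vx^0)$, which is~\eqref{ineq4-thm-rate}. For part~(2), $\mu$-strong convexity gives $\EE[F(\vx^k)-F^*]\ge\tfrac\mu2\EE\|\vx^k-\cP_{X^*}(\vx^k)\|^2$, hence $\Phi(\vx^k)\le\tfrac{2(1+\eta\mu)}{\mu}\EE[F(\vx^k)-F^*]$ and therefore $\tfrac{2\eta}{m}\EE[F(\vx^k)-F^*]\ge\tfrac{\eta\mu}{m(1+\eta\mu)}\Phi(\vx^k)$; the recursion then becomes $\Phi(\vx^{k+1})\le\big(1-\tfrac{\eta\mu}{m(1+\eta\mu)}\big)\Phi(\vx^k)$, and iterating gives~\eqref{thm-linear-rate}.
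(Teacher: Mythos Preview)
Your overall architecture is right, and the last two paragraphs (telescoping for part~(1), strong convexity for part~(2)) match the paper's argument essentially verbatim. The Lyapunov recursion $\Phi(\vx^{k+1})\le\Phi(\vx^k)-\tfrac{2\eta}{m}\EE[F(\vx^k)-F^*]$ is indeed the key inequality, and the monotonicity $\EE[F(\vx^{k+1})]\le\EE[F(\vx^k)]$ follows from~\eqref{ns-ineq1} together with the $\gamma_{\rho,2}$ bound exactly as you sketch (this is the paper's Lemma~\ref{lem:thm-ns-rate-lem4}).

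The gap is in how you obtain the one-step inequality. You first write the three-point bound for $F(\bar\vx^{k+1})-F^*$ with error $E_k$, and then use the separate estimate $\EE_{i_k}F(\vx^{k+1})\le(1-\tfrac1m)F(\vx^k)+\tfrac1m F(\bar\vx^{k+1})+\tfrac{L_c}{2m}\|\vd^k\|^2$. The second estimate relies on convexity, $\langle\nabla f(\vx^k),\vd^k\rangle\le f(\bar\vx^{k+1})-f(\vx^k)$, so $f(\bar\vx^{k+1})$ enters as an \emph{upper} bound. But to get $E_k$ small you must also upper-bound $f(\bar\vx^{k+1})-f(\vx^{k-j_k})-\langle\nabla f(\vx^{k-j_k}),\bar\vx^{k+1}-\vx^{k-j_k}\rangle$, and the only available tool is the full-gradient Lipschitz constant, contributing a term of order $\tfrac{L_f}{2}\|\vd^k\|^2$ to $E_k$ (hence $\tfrac{\eta L_f}{m}\|\vd^k\|^2$ to the recursion). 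There is no way to replace this $L_f$ by $L_c$ in the virtual-full-update route as you have written it, because $\bar\vx^{k+1}$ moves in all coordinates. Consequently the $\|\vd^k\|^2$ coefficient you can actually absorb requires $\eta\le(L_c+L_f+\dots)^{-1}$, which is strictly stronger than~\eqref{ns-eta-rate2}.

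The paper avoids this by never forming $F(\bar\vx^{k+1})$. It works directly with the one-block update: it expands $\|\vx^{k+1}-\cP_{X^*}(\vx^k)\|^2$, uses the block optimality~\eqref{ineq-rik} for $r_{i_k}$, and applies the \emph{block} descent~\eqref{lip-ineq} so that the only smoothness constant hitting the step is $L_c$. The full constant $L_f$ enters only through the delay term, via $\langle\nabla f(\vx^{k-j_k})-\nabla f(\vx^k),\vx^{k-j_k}-\vx^k\rangle\le L_f\|\vx^k-\vx^{k-j_k}\|^2$ (Lemma~\ref{lem:thm-ns-rate-lem3}), and the cross term $\langle\nabla f(\vx^{k-j_k})-\nabla f(\vx^k),\vd^k\rangle$ is controlled with $L_r$ (Lemma~\ref{lem:thm-ns-rate-lem1}); this is exactly what yields the three constants $L_c,\ L_f\gamma_{\rho,2}^2/m,\ L_r\gamma_{\rho,2}/\sqrt{m}$ in~\eqref{ns-eta-rate2}. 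If you want to keep the virtual-update viewpoint, you can still recover~\eqref{ns-eta-rate2} by \emph{not} passing through $F(\bar\vx^{k+1})$: keep the prox inequality for $R(\bar\vx^{k+1})-R(\vx^*)$ and the block descent $\EE_{i_k}f(\vx^{k+1})\le f(\vx^k)+\tfrac1m\langle\nabla f(\vx^k),\vd^k\rangle+\tfrac{L_c}{2m}\|\vd^k\|^2$ separate, and only combine them after substituting $\|\bar\vx^{k+1}-\vx^*\|^2=m\,\EE_{i_k}\|\vx^{k+1}-\vx^*\|^2-(m-1)\|\vx^k-\vx^*\|^2$; then the $\langle\nabla f(\vx^k),\vd^k\rangle$ and $\langle\nabla f(\vx^{k-j_k}),\vd^k\rangle$ terms collapse to the desired $L_r$-type cross term without any $L_f\|\vd^k\|^2$.
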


Before proving this theorem, we make two remarks and present a few lemmas below.
\begin{remark}
Similar to~\eqref{sm-linear-rate}, for the linear convergence result~\eqref{thm-linear-rate}, the strong convexity assumption can be weakened to \emph{optimal strong convexity}. The latter one is strictly weaker than the former one; see \emph{\cite{liu2015async-scd}} for more discussions.
\end{remark}

\begin{remark}[Comparison of stepsize]
For the special case that the delay is bounded by $\tau=o(\sqrt[4]{m})$, choosing $\rho =O(1+\frac{1}{\tau})$, we have both $\gamma_{\rho,1}$ and $\gamma_{\rho,2}$ are $O(\tau)$. Thus we can take stepsize almost $\frac{1}{L_c}$, which is larger than the stepsize $\frac{1}{2L_c}$ given in~\cite{liu2015async-scd}.
\end{remark}

\begin{lemma}\label{lem:thm-ns-rate-lem1}
Let $\gamma_{\rho,2}$ be defined in~\eqref{def-gamma}. We have
\begin{align}\label{ineq1-thm-rate-1}
\EE\langle\nabla_{i_k}f(\vx^{k-j_k})-\nabla_{i_k}f(\vx^{k}),\vx_{i_k}^{k}-\vx_{i_k}^{k+1}\rangle
\le  \textstyle\frac{L_r\gamma_{\rho,2}}{m\sqrt{m}}\EE \|\vd^{k}\|^2.
\end{align}
\end{lemma}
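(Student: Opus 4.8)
The plan is to reduce the coordinate-wise quantity on the left of~\eqref{ineq1-thm-rate-1} to a full-gradient cross term, bound that term by telescoping and the Lipschitz inequality~\eqref{lip-f} exactly as in the proof of Lemma~\ref{lem:ns-sqsum}, and then — instead of summing the resulting inequality over $k$ — keep $\|\vd^k\|$ isolated and trade all of the history terms $\|\vx^{d+1}-\vx^d\|$ against it using the fundamental bound~\eqref{dec-xbar}. The geometric weights produced by~\eqref{dec-xbar} are what turn the delay-weighted double sum into $\gamma_{\rho,2}^2$.

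Concretely, I would first note that since $R$ is block separable, the $i_k$-block of the virtual full update coincides with the actual update, $\bar{\vx}^{k+1}_{i_k}=\prox_{\eta r_{i_k}}(\vx^k_{i_k}-\eta\nabla_{i_k}f(\vx^{k-j_k}))=\vx^{k+1}_{i_k}$, so $\vx_{i_k}^{k}-\vx_{i_k}^{k+1}=-\vd^k_{i_k}$. Taking $\EE_{i_k}$ conditionally on the history and on $j_k$, and using that $\vd^k$ does not depend on $i_k$, the left side of~\eqref{ineq1-thm-rate-1} equals $\tfrac1m\EE\langle\nabla f(\vx^k)-\nabla f(\vx^{k-j_k}),\vd^k\rangle$. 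Writing $\nabla f(\vx^k)-\nabla f(\vx^{k-j_k})=\sum_{d=k-j_k}^{k-1}(\nabla f(\vx^{d+1})-\nabla f(\vx^d))$ and using the triangle and Cauchy--Schwarz inequalities together with~\eqref{lip-f} gives the pointwise estimate $\langle\nabla f(\vx^k)-\nabla f(\vx^{k-j_k}),\vd^k\rangle\le L_r\|\vd^k\|\sum_{d=k-j_k}^{k-1}\|\vx^{d+1}-\vx^d\|$.

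Next I apply Young's inequality with a free parameter $\beta>0$, followed by Cauchy--Schwarz on the sum, to get $L_r\|\vd^k\|\sum_{d=k-j_k}^{k-1}\|\vx^{d+1}-\vx^d\|\le \tfrac{L_r}{2\beta}\|\vd^k\|^2+\tfrac{L_r\beta}{2}\,j_k\sum_{d=k-j_k}^{k-1}\|\vx^{d+1}-\vx^d\|^2$. The point of this ordering is that the second term no longer contains $\vd^k$ and hence no longer depends on $j_k$ through the reading: taking the full expectation, the identity~\eqref{diff-zero} rewrites $\EE\big[j_k\sum_{d=k-j_k}^{k-1}\|\vx^{d+1}-\vx^d\|^2\big]$ as $\tfrac1m\sum_{t=1}^{k-1}q_t t\sum_{d=k-t}^{k-1}\EE\|\vd^d\|^2+\tfrac1m\sum_{t=k}^{\infty}q_t t\sum_{d=0}^{k-1}\EE\|\vd^d\|^2$. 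Invoking~\eqref{dec-xbar} iterated, i.e. $\EE\|\vd^d\|^2\le\rho^{k-d}\EE\|\vd^k\|^2$ for $d\le k$, together with the elementary interchange $\sum_{d=k-t}^{k-1}\rho^{k-d}=\sum_{\ell=1}^{t}\rho^\ell=\tfrac{\rho^t-1}{1-\rho^{-1}}$ (and, for the tail sum, $\sum_{d=0}^{k-1}\rho^{k-d}\le\sum_{\ell=1}^{t}\rho^\ell$ since $k\le t$ there), both double sums collapse into $\sum_{t\ge1}q_t t\,\tfrac{\rho^t-1}{1-\rho^{-1}}=\gamma_{\rho,2}^2$, yielding $\EE\big[j_k\sum_{d=k-j_k}^{k-1}\|\vx^{d+1}-\vx^d\|^2\big]\le\tfrac{\gamma_{\rho,2}^2}{m}\EE\|\vd^k\|^2$. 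Combining everything, $\tfrac1m\EE\langle\nabla f(\vx^k)-\nabla f(\vx^{k-j_k}),\vd^k\rangle\le\big(\tfrac{L_r}{2m\beta}+\tfrac{L_r\beta\gamma_{\rho,2}^2}{2m^2}\big)\EE\|\vd^k\|^2$, and minimizing the bracket over $\beta>0$ (the minimizer is $\beta=\sqrt m/\gamma_{\rho,2}$, well defined since $\gamma_{\rho,2}<\infty$ by Lemma~\ref{lem:nsm-dec}; the case $\gamma_{\rho,2}=0$ forces $j_k\equiv0$ and makes both sides vanish) gives exactly $\tfrac{L_r\gamma_{\rho,2}}{m\sqrt m}\EE\|\vd^k\|^2$, which is the claim.

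The genuinely new ingredient over Lemma~\ref{lem:ns-sqsum} is this reorganization: pushing $\|\vd^k\|$ out via Young's inequality so that the remaining delay-weighted term is exactly the one handled by~\eqref{diff-zero}, and then absorbing the history terms with~\eqref{dec-xbar} rather than by a telescoping sum over $k$. The only place that requires care is the bookkeeping in the nested sums — in particular the tail term $\sum_{t\ge k}q_t t\sum_{d=0}^{k-1}(\cdot)$, where one must use $k\le t$ to dominate $\sum_{d=0}^{k-1}\rho^{k-d}$ by $\sum_{\ell=1}^{t}\rho^\ell$ — and making sure the geometric identity that appears is the one matching the definition of $\gamma_{\rho,2}$ (namely $\tfrac{\rho^t-1}{1-\rho^{-1}}$, not $\tfrac{\rho^{t/2}-1}{\rho^{1/2}-1}$, which is why $\gamma_{\rho,1}$ does not enter). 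Everything after that is a routine computation.
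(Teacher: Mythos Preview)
Your proof is correct and follows essentially the same route as the paper. The paper's own argument is just a pointer: it reduces the left side to $\tfrac1m\EE\langle\nabla f(\vx^{k-j_k})-\nabla f(\vx^k),\vd^k\rangle$, applies Cauchy--Schwarz, and invokes the bound~\eqref{ineq6-2-dx} established inside the proof of Lemma~\ref{lem:nsm-dec}; what you have written is precisely the unpacking of~\eqref{ineq6-2-dx} (Young with parameter $\beta$, Cauchy--Schwarz on the delay sum, the identity~\eqref{diff-zero}, the iterated bound~\eqref{dec-xbar}, and the optimization $\beta=\sqrt m/\gamma_{\rho,2}$), so the two arguments coincide step for step.
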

\begin{proof}
It is proved via the Cauchy-Schwarz inequality, the bound~\eqref{ineq6-2-dx}, and $\EE\langle\nabla_{i_k}f(\vx^{k-j_k})-\nabla_{i_k}f(\vx^{k}),\vx_{i_k}^{k}-\vx_{i_k}^{k+1}\rangle
= \tfrac{1}{m}\EE\langle\nabla f(\vx^{k-j_k})-\nabla f(\vx^{k}),\vd^{k}\rangle$. \qed\end{proof}
\begin{lemma}\label{lem:thm-ns-rate-lem2}
It holds that
\begin{align}\label{ineq1-thm-rate-2}
&\EE\left[f(\vx^k)-f(\vx^{k+1})+r_{i_k}((\cP_{X^*}(\vx^{k}))_{i_k})-r_{i_k}(\vx_{i_k}^{k+1})\right] \cr
=\, &\EE[F(\vx^k)-F(\vx^{k+1})]+\tfrac{1}{m}\EE[R(\cP_{X^*}(\vx^{k}))-R(\vx^k)].
\end{align}
\end{lemma}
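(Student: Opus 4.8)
The plan is to prove Lemma~\ref{lem:thm-ns-rate-lem2} by a direct computation that takes conditional expectation with respect to the random block index $i_k$ and exploits the block-separable structure of $R$. Recall that the update~\eqref{main-update} only modifies the $i_k$-th block, so $\vx_i^{k+1}=\vx_i^k$ for all $i\neq i_k$. I would first rewrite the left-hand side using $F=f+R$ and the separability $R(\vx)=\sum_i r_i(\vx_i)$, noting that $f(\vx^k)-f(\vx^{k+1})$ already appears literally on both sides, so the content of the identity reduces to a statement about the $R$-terms and the single block term $r_{i_k}$.

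Concretely, the key step is the observation that, conditionally on the history, $\EE_{i_k}[r_{i_k}(\vx_{i_k}^{k+1})] = \frac1m\sum_{i=1}^m r_i(\vx_i^{k+1})$, and since $\vx_i^{k+1}=\vx_i^k$ for $i\neq i_k$ — but here we must be careful: inside the conditional expectation over $i_k$, the term $\sum_i r_i(\vx_i^{k+1})$ is not simply $R(\vx^{k+1})$ because $\vx^{k+1}$ depends on $i_k$. Instead I would argue: $\EE_{i_k} R(\vx^{k+1}) = \EE_{i_k}\big[r_{i_k}(\vx_{i_k}^{k+1}) + \sum_{i\neq i_k} r_i(\vx_i^k)\big] = \EE_{i_k}[r_{i_k}(\vx_{i_k}^{k+1})] + \frac{m-1}{m}R(\vx^k)$, using that $\sum_{i\neq i_k} r_i(\vx_i^k) = R(\vx^k) - r_{i_k}(\vx_{i_k}^k)$ and $\EE_{i_k} r_{i_k}(\vx_{i_k}^k) = \frac1m R(\vx^k)$. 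Rearranging gives $\EE_{i_k}[r_{i_k}(\vx_{i_k}^{k+1})] = \EE_{i_k} R(\vx^{k+1}) - \frac{m-1}{m}R(\vx^k)$. Similarly, since $(\cP_{X^*}(\vx^k))$ is independent of $i_k$, we have $\EE_{i_k}[r_{i_k}((\cP_{X^*}(\vx^k))_{i_k})] = \frac1m R(\cP_{X^*}(\vx^k))$.

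Substituting these two identities into the left-hand side and adding $\EE[f(\vx^k)-f(\vx^{k+1})]$ (which passes through the conditional expectation untouched up to the dependence of $\vx^{k+1}$ on $i_k$, and then combines with the $R$-term to form $F$), I would collect terms: the $\EE_{i_k} R(\vx^{k+1})$ piece combines with $\EE_{i_k} f(\vx^{k+1})$ to yield $\EE_{i_k} F(\vx^{k+1})$, the $f(\vx^k)$ and $-\frac{m-1}{m}R(\vx^k)$ pieces assemble toward $F(\vx^k)$ after accounting for the leftover $\frac1m R(\vx^k)$, and the $\frac1m R(\cP_{X^*}(\vx^k))$ term produces the $\frac1m\EE[R(\cP_{X^*}(\vx^k))-R(\vx^k)]$ on the right. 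Finally I would take the outer (total) expectation over the history to arrive at~\eqref{ineq1-thm-rate-2}.

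I do not anticipate a serious obstacle here; this is a bookkeeping lemma. The one place to be careful — and the step I would state most explicitly — is tracking which quantities are deterministic given the history (hence pull out of $\EE_{i_k}$ with a factor $\frac1m$ when indexed by $i_k$, e.g.\ $r_{i_k}(\vx_{i_k}^k)$ and $r_{i_k}((\cP_{X^*}(\vx^k))_{i_k})$) versus the post-update block $r_{i_k}(\vx_{i_k}^{k+1})$, whose argument itself depends on $i_k$ and therefore cannot be averaged into $\frac1m R(\vx^{k+1})$ directly. Getting the $\frac{m-1}{m}$ versus $\frac1m$ coefficients right on the $R(\vx^k)$ terms is the whole substance of the proof.
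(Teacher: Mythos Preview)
Your proposal is correct and takes essentially the same approach as the paper: both exploit the block-separability of $R$ and the fact that only block $i_k$ changes to reduce the identity to bookkeeping of $r_{i_k}$-terms under $\EE_{i_k}$. The paper's one-line proof is slightly more direct---it uses the pointwise identity $r_{i_k}(\vx_{i_k}^k)-r_{i_k}(\vx_{i_k}^{k+1})=R(\vx^k)-R(\vx^{k+1})$ (valid since the iterates differ only at block $i_k$) before taking expectation, thereby avoiding your $\tfrac{m-1}{m}$ detour---but the content is identical.
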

\begin{proof}
Equation~\eqref{ineq1-thm-rate-2} is a direct consequence of 
$r_{i_k}((\cP_{X^*}(\vx^{k}))_{i_k})-r_{i_k}(\vx_{i_k}^{k+1})=r_{i_k}((\cP_{X^*}(\vx^{k}))_{i_k})-r_{i_k}(\vx_{i_k}^k)+R(\vx^{k})-R(\vx^{k+1}).$
\qed\end{proof}
\begin{lemma}\label{lem:thm-ns-rate-lem3}
Let $\gamma_{\rho,2}$ be defined in~\eqref{def-gamma}. It holds that
\begin{align}\label{ineq1-thm-rate-3}
\EE \langle\nabla_{i_k}f(\vx^{k-j_k}), (\cP_{X^*}(\vx^{k}))_{i_k}-\vx_{i_k}^{k}\rangle
\le &\tfrac{1}{m}\EE\big[f(\cP_{X^*}(\vx^{k}))-f(\vx^k)\big]\cr
&+\tfrac{L_f\gamma_{\rho,2}^2}{m^2}\EE\|\vd^k\|^2.
\end{align}
\end{lemma}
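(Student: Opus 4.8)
The plan is to pass from the block inner product to a full inner product by averaging over $i_k$, then to bound the resulting delayed-gradient term by a convexity-plus-descent-lemma argument that converts the delay error into $\EE\|\vx^{k}-\vx^{k-j_k}\|^2$, and finally to estimate that quantity by $\EE\|\vd^{k}\|^2$ using the machinery already developed for Lemma~\ref{lem:nsm-dec}.

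First I would take expectation over $i_k$. Since $i_k$ is uniform on $[m]$ and independent of $j_k$, for each realization of the history and of $j_k$ we have $\EE_{i_k}\langle\nabla_{i_k}f(\vx^{k-j_k}),(\cP_{X^*}(\vx^{k}))_{i_k}-\vx^{k}_{i_k}\rangle=\tfrac{1}{m}\langle\nabla f(\vx^{k-j_k}),\cP_{X^*}(\vx^{k})-\vx^{k}\rangle$, so it suffices to bound $\tfrac{1}{m}\EE\langle\nabla f(\vx^{k-j_k}),\cP_{X^*}(\vx^{k})-\vx^{k}\rangle$. For the pathwise bound I would combine convexity of $f$ at the delayed point, $f(\cP_{X^*}(\vx^{k}))\ge f(\vx^{k-j_k})+\langle\nabla f(\vx^{k-j_k}),\cP_{X^*}(\vx^{k})-\vx^{k-j_k}\rangle$, with the descent lemma from Assumption~\ref{assump-fun}, $f(\vx^{k})\le f(\vx^{k-j_k})+\langle\nabla f(\vx^{k-j_k}),\vx^{k}-\vx^{k-j_k}\rangle+\tfrac{L_f}{2}\|\vx^{k}-\vx^{k-j_k}\|^2$. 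Subtracting, the $f(\vx^{k-j_k})$ terms and the inner products against $\vx^{k-j_k}$ cancel and we are left with $\langle\nabla f(\vx^{k-j_k}),\cP_{X^*}(\vx^{k})-\vx^{k}\rangle\le f(\cP_{X^*}(\vx^{k}))-f(\vx^{k})+\tfrac{L_f}{2}\|\vx^{k}-\vx^{k-j_k}\|^2$.

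The remaining and main step is to show $\EE\|\vx^{k}-\vx^{k-j_k}\|^2\le\tfrac{\gamma_{\rho,2}^2}{m}\EE\|\vd^{k}\|^2$; only a bounded constant multiple of this is actually needed, since the lemma asks for $\tfrac{L_f}{2m}\EE\|\vx^{k}-\vx^{k-j_k}\|^2\le\tfrac{L_f\gamma_{\rho,2}^2}{m^{2}}\EE\|\vd^{k}\|^2$, which even leaves a factor $\tfrac{1}{2}$ to spare. Writing $\vx^{k}-\vx^{k-j_k}=\sum_{d=k-j_k}^{k-1}(\vx^{d+1}-\vx^{d})$ and applying Cauchy--Schwarz gives $\|\vx^{k}-\vx^{k-j_k}\|^2\le j_k\sum_{d=k-j_k}^{k-1}\|\vx^{d+1}-\vx^{d}\|^2$. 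Taking expectation, using that $j_k$ is independent of the iterates produced up to step $k$ exactly as in~\eqref{diff-zero}, and using $\EE\|\vx^{d+1}-\vx^{d}\|^2=\tfrac{1}{m}\EE\|\vd^{d}\|^2$, the right-hand side becomes $\tfrac{1}{m}\sum_{t\ge1}q_t t\sum_{d=k-t}^{k-1}\EE\|\vd^{d}\|^2$. I would then apply the geometric decay $\EE\|\vd^{d}\|^2\le\rho^{k-d}\EE\|\vd^{k}\|^2$, obtained by iterating~\eqref{dec-xbar}, together with $\sum_{d=k-t}^{k-1}\rho^{k-d}\le\sum_{j=1}^{t}\rho^{j}=\tfrac{\rho^{t}-1}{1-\rho^{-1}}$ and the definition $\gamma_{\rho,2}^2=\sum_{t\ge1}q_t t\tfrac{\rho^{t}-1}{1-\rho^{-1}}$ (finite by the first part of Lemma~\ref{lem:nsm-dec}), which gives the claimed estimate. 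Combining the three steps and multiplying by $\tfrac{1}{m}$ yields~\eqref{ineq1-thm-rate-3}.

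I expect the only delicate point to be the bookkeeping in this last step: swapping the nested delay-weighted summations and invoking~\eqref{dec-xbar} with the correct powers of $\rho$, while handling the boundary terms that arise when $j_k>k$ under the negative-iterate convention. This is, however, essentially a repetition of the manipulations already carried out in the proof of Lemma~\ref{lem:nsm-dec} (around~\eqref{diff-zero} and~\eqref{ineq6-2-dx}), so no new idea is required. The point of the convexity/descent-lemma splitting is precisely that it forces the delay error to reduce to $\|\vx^{k}-\vx^{k-j_k}\|^2$ alone, with no leftover term involving $\cP_{X^*}(\vx^{k})-\vx^{k}$, which would be incompatible with the desired right-hand side.
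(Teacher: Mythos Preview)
Your proposal is correct and follows essentially the same route as the paper: average over $i_k$, reduce the delayed inner product to $f(\cP_{X^*}(\vx^k))-f(\vx^k)$ plus a multiple of $\|\vx^k-\vx^{k-j_k}\|^2$, and then bound the latter via~\eqref{diff-zero} and~\eqref{dec-xbar} to get $\tfrac{\gamma_{\rho,2}^2}{m}\EE\|\vd^k\|^2$. The only difference is in the middle step: the paper uses convexity twice (at $\vx^{k-j_k}$ toward $\cP_{X^*}(\vx^k)$ and at $\vx^k$ toward $\vx^{k-j_k}$) plus Cauchy--Schwarz/Lipschitz on the gradient difference, yielding a factor $L_f$, whereas your convexity-plus-descent-lemma combination yields $\tfrac{L_f}{2}$ and is therefore slightly tighter---which is exactly the ``factor $\tfrac{1}{2}$ to spare'' you observed.
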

\begin{proof}
Since $i_k$ is uniformly distributed and independent of $j_k$, we have
\begin{equation}\label{eq:pf-lem:thm-ns-rate-lem3-eq1}
\EE_{i_k} \langle\nabla_{i_k}f(\vx^{k-j_k}), (\cP_{X^*}(\vx^{k}))_{i_k}-\vx_{i_k}^{k}\rangle
= \tfrac{1}{m} \langle\nabla f(\vx^{k-j_k}), \cP_{X^*}(\vx^{k})-\vx^{k}\rangle.
\end{equation}
We split the term and apply the convexity of $f$ and Lipschitz continuity of $\nabla f$ to get
\begin{align}\label{eq:pf-lem:thm-ns-rate-lem3-eq2}
&\langle\nabla f(\vx^{k-j_k}), \cP_{X^*}(\vx^{k})-\vx^{k}\rangle \cr
=&\langle\nabla f(\vx^{k-j_k}), \cP_{X^*}(\vx^{k})-\vx^{k-j_k}\rangle \cr
 & +\langle\nabla f(\vx^k) + \nabla f(\vx^{k-j_k})-\nabla f(\vx^k), \vx^{k-j_k}-\vx^{k}\rangle\cr
\le &\big[f(\cP_{X^*}(\vx^{k}))-f(\vx^{k-j_k})+f(\vx^{k-j_k})-f(\vx^k)\big] \cr
    &+\langle\nabla f(\vx^{k-j_k})-\nabla f(\vx^k), \vx^{k-j_k}-\vx^{k}\rangle\cr
\le &\big[f(\cP_{X^*}(\vx^{k}))-f(\vx^k)\big]+ L_f\|\vx^{k-j_k}-\vx^{k}\|^2.
\end{align}
Substituting~\eqref{eq:pf-lem:thm-ns-rate-lem3-eq2} into~\eqref{eq:pf-lem:thm-ns-rate-lem3-eq1} and taking expectation yield
\begin{align*}
&\EE \langle\nabla_{i_k}f(\vx^{k-j_k}), (\cP_{X^*}(\vx^{k}))_{i_k}-\vx_{i_k}^{k}\rangle\\
\le &\tfrac{1}{m}\EE\big[f(\cP_{X^*}(\vx^{k}))-f(\vx^k)\big]+\tfrac{L_f}{m}\EE\|\vx^{k-j_k}-\vx^{k}\|^2.
\end{align*}
Noting $\|\vx^k-\vx^{k-j_k}\|^2\le j_k\sum_{d=k-j_k}^{k-1}\|\vx^{d+1}-\vx^d\|^2$, applying~\eqref{diff-zero} and~\eqref{dec-xbar} and using the definition of $\gamma_{\rho,2}$, we complete the proof of~\eqref{ineq1-thm-rate-3}.
\qed\end{proof}
\begin{lemma}\label{lem:thm-ns-rate-lem4}
Under the assumptions of Theorem \ref{thm:ns-rate}, we have $\EE [F(\vx^{k+1})]\le \EE [F(\vx^k)],\,\forall k$.
\end{lemma}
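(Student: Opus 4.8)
The plan is to reuse the one‑step estimate already established inside the proof of Lemma~\ref{lem:ns-sqsum}, namely inequality~\eqref{ns-ineq1}, which holds for every $k$ with \emph{no} restriction on the stepsize. Taking full expectation of~\eqref{ns-ineq1} gives
\begin{align*}
\EE F(\vx^{k+1})\le \EE F(\vx^k)+\tfrac{1}{m}\Big(\tfrac{L_c}{2}-\tfrac{1}{\eta}\Big)\EE\|\vd^k\|^2+\tfrac{1}{m}\EE\langle\nabla f(\vx^k)-\nabla f(\vx^{k-j_k}),\vd^k\rangle,
\end{align*}
so the whole claim reduces to showing that the delay‑induced cross term on the right is dominated by the negative term.

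The key observation is that this cross term is exactly the quantity bounded in Lemma~\ref{lem:thm-ns-rate-lem1}: since $\vx^{k+1}=\vx^k+U_{i_k}\vd^k$ we have $\vx^k_{i_k}-\vx^{k+1}_{i_k}=-\vd^k_{i_k}$, hence
$\tfrac{1}{m}\EE\langle\nabla f(\vx^k)-\nabla f(\vx^{k-j_k}),\vd^k\rangle=\EE\langle\nabla_{i_k}f(\vx^{k-j_k})-\nabla_{i_k}f(\vx^k),\vx^k_{i_k}-\vx^{k+1}_{i_k}\rangle\le\tfrac{L_r\gamma_{\rho,2}}{m\sqrt m}\EE\|\vd^k\|^2$.
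Note Lemma~\ref{lem:thm-ns-rate-lem1} in turn rests on the slow‑variation estimate~\eqref{dec-xbar}, which is available because the stepsize satisfies~\eqref{ns-rate-eta} under the hypotheses of Theorem~\ref{thm:ns-rate}. Substituting yields
\begin{align*}
\EE F(\vx^{k+1})\le \EE F(\vx^k)+\tfrac{1}{m}\Big(\tfrac{L_c}{2}-\tfrac{1}{\eta}+\tfrac{L_r\gamma_{\rho,2}}{\sqrt m}\Big)\EE\|\vd^k\|^2.
\end{align*}

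It then only remains to check that the bracketed coefficient is nonpositive. The stepsize bound~\eqref{ns-eta-rate2} gives $\eta\le\big(L_c+\tfrac{2L_f\gamma^2_{\rho,2}}{m}+\tfrac{2L_r\gamma_{\rho,2}}{\sqrt m}\big)^{-1}$; since every term in that denominator is nonnegative and $L_c\ge\tfrac{L_c}{2}$, $\tfrac{2L_r\gamma_{\rho,2}}{\sqrt m}\ge\tfrac{L_r\gamma_{\rho,2}}{\sqrt m}$, we obtain $\tfrac1\eta\ge\tfrac{L_c}{2}+\tfrac{L_r\gamma_{\rho,2}}{\sqrt m}$, so the coefficient is $\le0$ and $\EE F(\vx^{k+1})\le\EE F(\vx^k)$ follows. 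There is essentially no obstacle here: all the genuine work is carried by Lemma~\ref{lem:thm-ns-rate-lem1} (equivalently by~\eqref{dec-xbar}), and the only points to watch are that~\eqref{ns-ineq1} was derived without a stepsize restriction — so it can be invoked per iteration rather than only after summing over $k$ as in Lemma~\ref{lem:ns-sqsum} — and that condition~\eqref{ns-eta-rate2} comfortably implies the weaker bound $\eta\le\big(\tfrac{L_c}{2}+\tfrac{L_r\gamma_{\rho,2}}{\sqrt m}\big)^{-1}$ actually needed.
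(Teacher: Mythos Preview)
Your proof is correct and follows essentially the same approach as the paper: take expectation of~\eqref{ns-ineq1}, bound the delay cross term via Lemma~\ref{lem:thm-ns-rate-lem1} (i.e., inequality~\eqref{ineq1-thm-rate-1}), and check that~\eqref{ns-eta-rate2} forces the resulting coefficient $\tfrac{L_c}{2}-\tfrac{1}{\eta}+\tfrac{L_r\gamma_{\rho,2}}{\sqrt m}$ to be nonpositive. Your write-up is simply more explicit about the identification of the cross term and about why~\eqref{ns-eta-rate2} suffices, but the argument is the same.
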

\begin{proof} Taking expectation on both side of~\eqref{ns-ineq1} and using~\eqref{ineq1-thm-rate-1} yield
\begin{align*}
\EE [F(\vx^{k+1})]  
\le&\textstyle\EE [F(\vx^k)]+\frac{1}{m}\left(\frac{L_c}{2}-\frac{1}{\eta}+{L_r\gamma_{\rho,2}\over \sqrt{m}}\right)\EE\|\vd^k\|^2,
\end{align*}
which implies $\EE [F(\vx^{k+1})]\le \EE [F(\vx^k)]$ from the condition on $\eta$ in~\eqref{ns-eta-rate2}.
\qed\end{proof}
Now we are ready to prove Theorem \ref{thm:ns-rate}.
\begin{proof}[of Theorem \ref{thm:ns-rate}]
From the update of $\vx^{k+1}$, we have
$$\vzero\in\nabla_{i_k}f(\vx^{k-j_k})+\tfrac{1}{\eta}(\vx_{i_k}^{k+1}-\vx_{i_k}^k)+\partial r_{i_k}(\vx_{i_k}^{k+1}),$$
and thus for any $\vx_{i_k}$, it holds from the convexity of $r_{i_k}$ that
\begin{equation}\label{ineq-rik}
r_{i_k}(\vx_{i_k})\ge r_{i_k}(\vx_{i_k}^{k+1})-\big\langle\nabla_{i_k}f(\vx^{k-j_k})+\tfrac{1}{\eta}(\vx_{i_k}^{k+1}-\vx_{i_k}^k), \vx_{i_k}-\vx_{i_k}^{k+1}\big\rangle.
\end{equation}
Since $\vx^{k+1}=\vx^{k}+U_{i_k}(\vx^{k+1}-\vx^k)$, we have
\begin{align}\label{eq:xkcpX}
\big\|\vx^{k+1}-\cP_{X^*}(\vx^{k})\big\|^2
=  &\big\|\vx^k-\cP_{X^*}(\vx^{k})\big\|^2-\|\vx_{i_k}^{k+1}-\vx_{i_k}^k\|^2\cr
   &+2\langle\vx_{i_k}^{k+1}-(\cP_{X^*}(\vx^{k}))_{i_k}, \vx_{i_k}^{k+1}-\vx_{i_k}^k\rangle.
\end{align}
From the definition of $\cP_{X^*}$, it follows that $\|\vx^{k+1}-\cP_{X^*}(\vx^{k+1})\|^2\le \|\vx^{k+1}-\cP_{X^*}(\vx^{k})\|^2$. Then using~\eqref{ineq-rik} and~\eqref{eq:xkcpX}, we have
\begin{align}\label{eq:xkcpX2}
\|\vx^{k+1}-\cP_{X^*}(\vx^{k+1})\|^2
\le &\|\vx^k-\cP_{X^*}(\vx^{k})\|^2-\|\vx_{i_k}^{k+1}-\vx_{i_k}^k\|^2\cr
    &+2\eta\left(r_{i_k}((\cP_{X^*}(\vx^{k}))_{i_k})-r_{i_k}(\vx_{i_k}^{k+1})\right)\cr
		&+\langle\nabla_{i_k}f(\vx^{k-j_k}), (\cP_{X^*}(\vx^{k}))_{i_k}-\vx_{i_k}^{k+1}\rangle.
\end{align}
We split the cross term to have
\begin{align*}
\langle\nabla_{i_k}f(\vx^{k-j_k}), (\cP_{X^*}(\vx^{k}))_{i_k}-\vx_{i_k}^{k+1}\rangle = \langle\nabla_{i_k}f(\vx^{k-j_k}), (\cP_{X^*}(\vx^{k}))_{i_k}-\vx_{i_k}^{k}\rangle\cr
\qquad + \langle\nabla_{i_k}f(\vx^{k}), \vx_{i_k}^{k}-\vx_{i_k}^{k+1}\rangle  +\langle\nabla_{i_k}f(\vx^{k-j_k})-\nabla_{i_k}f(\vx^{k}),\vx_{i_k}^{k}-\vx_{i_k}^{k+1}\rangle.
\end{align*}
From~\eqref{lip-ineq}, it follows that
\begin{align*}
\langle\nabla_{i_k}f(\vx^{k}), \vx_{i_k}^{k}-\vx_{i_k}^{k+1}\rangle\le f(\vx^k)-f(\vx^{k+1})+\tfrac{L_c}{2}\|\vx_{i_k}^{k}-\vx_{i_k}^{k+1}\|^2.
\end{align*}
Plugging the above two equations into~\eqref{eq:xkcpX2} gives
\begin{align}\label{ineq1-thm-rate}
   &\textstyle\left\|\vx^{k+1}-\cP_{X^*}(\vx^{k+1})\right\|^2\cr
\le &\left\|\vx^k-\cP_{X^*}(\vx^{k})\right\|^2-(1-\eta L_c)\|\vx_{i_k}^{k+1}-\vx_{i_k}^k\|^2\cr
    &+2\eta\langle\nabla_{i_k}f(\vx^{k-j_k}), (\cP_{X^*}(\vx^{k}))_{i_k}-\vx_{i_k}^{k}\rangle\cr
    &+2\eta\langle\nabla_{i_k}f(\vx^{k-j_k})-\nabla_{i_k}f(\vx^{k}),\vx_{i_k}^{k}-\vx_{i_k}^{k+1}\rangle\cr
    &+2\eta\left[f(\vx^k)-f(\vx^{k+1})+r_{i_k}((\cP_{X^*}(\vx^{k}))_{i_k})-r_{i_k}(\vx_{i_k}^{k+1})\right].
\end{align}
Substituting~\eqref{ineq1-thm-rate-1} through~\eqref{ineq1-thm-rate-3} into~\eqref{ineq1-thm-rate} and rearranging terms yield
\begin{align*}
&\textstyle\EE\left\|\vx^{k+1}-\cP_{X^*}(\vx^{k+1})\right\|^2\cr
\le &\textstyle \EE\left\|\vx^k-\cP_{X^*}(\vx^{k})\right\|^2-\frac{1}{m}\left[1-\eta L_c-\frac{2\eta L_f \gamma_{\rho,2}^2}{m}-\frac{2\eta L_r\gamma_{\rho,2}}{\sqrt{m}}\right]\EE\|\vd^k\|^2\cr
&\textstyle+\tfrac{2\eta}{m}\EE[F^*-F(\vx^k)]+2\eta\EE[F(\vx^k)-F(\vx^{k+1})]
\end{align*}
The above inequality together with~\eqref{ns-eta-rate2} implies
\begin{align*}
    &\EE\|\vx^{k+1}-\cP_{X^*}(\vx^{k+1})\|^2\\
\le &\EE\|\vx^k-\cP_{X^*}(\vx^{k})\|^2+\tfrac{2\eta}{m}\EE[F^*-F(\vx^k)]+2\eta\EE[F(\vx^k)-F(\vx^{k+1})]
\end{align*}
and thus, with the monotonicity of $\EE[F(\vx^k)]$ in Lemma \ref{lem:thm-ns-rate-lem4},
\begin{align}
&\textstyle\EE\left\|\vx^{k+1}-\cP_{X^*}(\vx^{k+1})\right\|^2+2\eta\EE[F(\vx^{k+1})-F^*]\cr
\le &\textstyle \EE\left\|\vx^k-\cP_{X^*}(\vx^{k})\right\|^2+ 2\eta\EE[F(\vx^{k})-F^*]-\tfrac{2\eta}{m}\EE[F(\vx^{k})-F^*]\label{ineq2-thm-rate}\\
\le &\textstyle \left\|\vx^0-\cP_{X^*}(\vx^{0})\right\|^2+ 2\eta\EE[F(\vx^{0})-F^*]- \tfrac{2\eta}{m}\sum_{t=0}^k\EE[F(\vx^{t})-F^*]\cr
\le &\textstyle\left\|\vx^0-\cP_{X^*}(\vx^{0})\right\|^2+ 2\eta\EE[F(\vx^{0})-F^*]- \tfrac{2\eta}{m}(k+1)\EE[F(\vx^{k+1})-F^*].\label{ineq3-thm-rate}
\end{align}
Hence,~\eqref{ineq4-thm-rate} follows.

When $F$ is strongly convex with constant $\mu$, we have
$$F(\vx^k)-F^*\ge\tfrac{\mu}{2}\|\vx^k-\cP_{X^*}(\vx^{k})\|^2,$$
and thus from~\eqref{ineq2-thm-rate}, it follows that
\begin{align*}
&\textstyle\EE\left\|\vx^{k+1}-\cP_{X^*}(\vx^{k+1})\right\|^2+2\eta\EE[F(\vx^{k+1})-F^*]\cr
\le &\textstyle \EE\left\|\vx^k-\cP_{X^*}(\vx^{k})\right\|^2+ \left(2\eta-\frac{2\eta^2\mu}{m(1+\eta\mu)}\right)\EE[F(\vx^{k})-F^*]\nonumber \\
&\textstyle-\left(\frac{2\eta}{m}-\frac{2\eta^2\mu}{m(1+\eta\mu)}\right)\frac{\mu}{2}\EE\|\vx^k-\cP_{X^*}(\vx^{k})\|^2\cr
=&\textstyle\left(1-\frac{\eta\mu}{m(1+\eta\mu)}\right)\left(\EE\left\|\vx^{k}-\cP_{X^*}(\vx^{k})\right\|^2+2\eta\EE[F(\vx^{k})-F^*]\right).
\end{align*}
Therefore,~\eqref{thm-linear-rate} follows, and we complete the proof.
\hfill\qed\end{proof}

\section{Poisson distribution}\label{sec:poisson}
We can treat the asynchronous reading and writing as a queueing system.
Assume the $p+1$ processors have the same computing power (i.e., the same speed of reading and writing). At any time $k$, suppose the update to $\vx_{i_k}$ is performed by the $p_k$-th processor, which can be treated as the server with speed (or service rate) \emph{one} of reading and writing. All the other $p$ processors can be treated as customers, each with speed (or arrival rate) \emph{one}, where any update to $\vx$ from the $p$ processors can be regarded as one customer's arrival. Under this setting, from the $p_k$-th processor starts reading $\vx$ until it finishes updating $\vx_{i_k}$, there would be $p$ customers in the queue in average, namely, the delay $j_k$ follows the Poisson distribution with parameter $p$. Summarizing the above discussion, we have the following result.
\begin{claim}\label{cm:poisson-dist}
Suppose Algorithm \ref{alg:asyn_bcd} runs on a system with $p+ 1$ processors, which have the same speed of reading and writing during the iterations. Then the delay $j_k$ follows the Poisson distribution with parameter $p$, i.e., for all $k$,
\begin{equation}\label{eq:poisson-dist}
\Prob(j_k=t)=\frac{p^t e^{-p}}{t!},\,t=0,1,\ldots,
\end{equation}
which implies no delay if $p=0$.
\end{claim}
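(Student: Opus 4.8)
The plan is to recast the asynchronous read/write activity as a queueing system in which the processor currently updating $\vx$ plays the role of a server and the remaining $p$ processors play the role of arriving customers, and then to identify the delay $j_k$ with the number of arrivals that occur during one service. The first step is purely bookkeeping: fix $k$ and let $p_k$ be the processor that performs the $k$-th global update. Between the moment $p_k$ reads $\vx$ (so that $\hat\vx^k=\vx^{k-j_k}$) and the moment it commits its prox-linear step (producing $\vx^{k+1}$), the global counter advances by exactly $j_k$, since each increment corresponds to one completed update; and because $p_k$ is busy reading, evaluating $\nabla_{i_k}f$, and writing throughout this interval, every one of those $j_k$ increments is produced by one of the \emph{other} $p$ processors. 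Thus $j_k$ is exactly the number of updates committed by the remaining $p$ processors during a single read--compute--write cycle of $p_k$, and it remains to compute the law of that count.

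For the second step I would invoke the equal-speed hypothesis to fix the two inputs to this count. Normalizing the common processing rate so that one full cycle takes one unit of time, the relevant interval has length one. Modeling each of the other $p$ processors as an independent source that commits updates at unit rate, their pooled commit stream is, by superposition of independent Poisson processes, a Poisson process of rate $p$; the number of its points in a fixed interval of length one is then Poisson with parameter $p$, which is~\eqref{eq:poisson-dist}. The hypothesis in~\eqref{prob-delay} that $\vj$ is independent of $i_k$ is consistent with this picture because $i_k$ is drawn by $p_k$ uniformly at random, independently of the timing of the other processors. Finally, when $p=0$ there are no competing updates, the interval contains no points almost surely, and a Poisson$(0)$ variable is identically zero, recovering the stated no-delay conclusion.

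The step I expect to be the main obstacle is making the equal-speed assumption precise enough to deliver a genuine Poisson law rather than merely the correct mean $\EE[\vj]=p$: if instead one models \emph{both} the active cycle length and the competing commit stream as memoryless (exponential cycles of mean one), the count becomes the number of rate-$p$ Poisson points in an independent $\mathrm{Exp}(1)$ window, which is geometric with mean $p$, not Poisson. The resolution I would adopt is to treat the cycle length as (approximately) deterministic --- justified because the cycle is dominated by the fixed gradient computation --- while retaining the Poisson model for the pooled stream of the other $p$ processors; this can be supported by a Palm--Khintchine-type superposition argument, since the pooled stream is a superposition of $p$ comparatively sparse renewal streams and hence close to Poisson of rate $p$ over a deterministic unit window. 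As the statement is labeled a Claim and is used only to motivate a practical stepsize, I would present this idealized derivation and note that the empirical delay histograms reported later in the paper confirm the Poisson$(p)$ fit.
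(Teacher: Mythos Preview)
Your proposal is correct and matches the paper's approach: the paper likewise models the active processor as a server with rate one and the other $p$ processors as customers arriving at rate one each, so that the delay is the number of arrivals during a single service cycle. In fact your treatment is more careful than the paper's, which simply asserts that the mean count is $p$ and hence the law is Poisson$(p)$ without addressing the deterministic-cycle-versus-exponential-cycle subtlety you raise; the paper relies on the empirical histograms for validation, just as you anticipated.
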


In general, if the processors have different computing power, $j_k$ would follow Poisson distribution with a parameter being the speed ratio of the other $p$ processors to the $p_k$-th one. However, in a multi-core workstation with shared memory, the processors are usually of the same style and can have the same computing ability. In the following, we assume the distribution in~\eqref{prob-delay} to be Poisson distribution with parameter $p$ and discuss the convergence results we obtained in the previous sections. First we give the values of the expected quantities we used before.
\begin{proposition}\label{prop-const}
Suppose there are $p+1$ processors and~\eqref{eq:poisson-dist} holds. Then for any $\rho>1$, we have that for all $k$,
\begin{align}\label{eq:const-prop}
&T=\EE[\vj]= p,  &&S=\EE[\vj^2]=p(p+1),\nonumber\\
&M_\rho=\EE[\rho^{\vj}]=e^{p(\rho-1)},&&N_\rho=\EE[\vj\rho^{\vj}]=\rho pe^{p(\rho-1)},\\
&\gamma_{\rho,1}=\tfrac{e^{p(\sqrt{\rho}-1)}-1}{\sqrt{\rho}-1},
&&\gamma_{\rho,2}=\Big(\tfrac{\rho p e^{p(\rho-1)}-p}{1-\rho^{-1}}\Big)^{-1}.\nonumber
\end{align}
where $\gamma_{\rho,1}$ and $\gamma_{\rho,2}$ are defined in~\eqref{def-gamma}.
\end{proposition}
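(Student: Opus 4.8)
The plan is to extract every quantity from the probability generating function of the Poisson law. Under~\eqref{eq:poisson-dist} one has $g(s):=\EE[s^{\vj}]=\sum_{t=0}^\infty s^t\frac{p^te^{-p}}{t!}=e^{-p}\sum_{t=0}^\infty\frac{(ps)^t}{t!}=e^{p(s-1)}$, an entire function of $s$; in particular $M_\sigma=g(\sigma)<\infty$ for every $\sigma$, so Assumption~\ref{assump2} holds for all $\rho>1$ and all the series appearing below converge absolutely, which justifies the term-by-term manipulations (this convergence was also noted in the proof of Lemma~\ref{lem:nsm-dec}). Differentiating, $g'(s)=p\,e^{p(s-1)}$ and $g''(s)=p^2e^{p(s-1)}$.

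First I would record $T=\EE[\vj]=g'(1)=p$ and $S=\EE[\vj^2]=g''(1)+g'(1)=p^2+p=p(p+1)$. Next, directly, $M_\rho=\EE[\rho^{\vj}]=g(\rho)=e^{p(\rho-1)}$, and $N_\rho=\EE[\vj\rho^{\vj}]=\rho\,g'(\rho)=\rho p\,e^{p(\rho-1)}$.

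For $\gamma_{\rho,1}$ I would split the defining sum in~\eqref{def-gamma} using $\tfrac{\rho^{t/2}-1}{\rho^{1/2}-1}=\tfrac{(\sqrt\rho)^t-1}{\sqrt\rho-1}$, so that $\gamma_{\rho,1}=\tfrac{1}{\sqrt\rho-1}\big(\sum_{t\ge1}q_t(\sqrt\rho)^t-\sum_{t\ge1}q_t\big)=\tfrac{1}{\sqrt\rho-1}\big((g(\sqrt\rho)-q_0)-(1-q_0)\big)=\tfrac{g(\sqrt\rho)-1}{\sqrt\rho-1}=\tfrac{e^{p(\sqrt\rho-1)}-1}{\sqrt\rho-1}$. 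Similarly, pulling the constant $\tfrac{1}{1-\rho^{-1}}$ out of the sum defining $\gamma_{\rho,2}$ gives $\gamma_{\rho,2}=\big(\tfrac{1}{1-\rho^{-1}}(\sum_{t\ge1}q_tt\rho^t-\sum_{t\ge1}q_tt)\big)^{1/2}=\big(\tfrac{N_\rho-T}{1-\rho^{-1}}\big)^{1/2}=\big(\tfrac{\rho p\,e^{p(\rho-1)}-p}{1-\rho^{-1}}\big)^{1/2}$, which is the claimed expression (the exponent displayed in the statement should read $1/2$, consistent with~\eqref{def-gamma}).

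There is no real obstacle here: every identity is either the Poisson generating function, one of its first two derivatives evaluated at $1$ or at $\rho$, or an elementary rearrangement of a geometric-type factor. The only care needed is bookkeeping of the summation ranges, since the $\gamma$'s are defined by sums over $t\ge1$ (so the $q_0$ term is absent) whereas $g$, $M_\rho$ and $N_\rho$ involve sums over $t\ge0$; tracking this correctly in the two $\gamma$ computations is the sole place where a slip could occur.
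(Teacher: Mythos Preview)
Your proof is correct and is exactly the ``standard'' computation the paper alludes to (the paper gives no details beyond that remark). Your observation that the exponent on $\gamma_{\rho,2}$ in the statement should be $1/2$ rather than $-1$, to match the definition~\eqref{def-gamma}, is also correct and worth flagging.
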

The proof of this proposition is standard. 
From the  quantities in~\eqref{eq:const-prop} and the theorems we established in the previous sections, we make the following observations:
\begin{enumerate}
\item If $p=o(\sqrt{m})$, we can guarantee the convergence of Algorithm \ref{alg:asyn_bcd} for both smooth and nonsmooth problems by setting $\eta\lessapprox \frac{1}{L_c}$ (see Theorems \ref{thm:cvg} and \ref{thm:convg-nsm}), where $\lessapprox$ means ``less than but close to'';
\item If $2e^2(p+1)+p=o(\sqrt{m})$, then choosing $\rho=1+\frac{1}{p}$, we have the convergence rate of Algorithm \ref{alg:asyn_bcd} obtained in Theorem \ref{thm:rate-cvx-sm} by setting $\eta\lessapprox\frac{2}{eL_c}$. Then $D\approx\frac{\eta}{m}$ in~\eqref{def-const-D}, and thus near-linear speedup is achieved for solving convex smooth problems;
\item If $p=o(\sqrt[4]{m})$, we can guarantee the convergence rate of Algorithm \ref{alg:asyn_bcd} in Theorem \ref{thm:ns-rate} by setting $\eta\lessapprox\frac{1}{L_c}$ and thus  a near-linear speedup for convex nonsmooth problems.
\end{enumerate}

\section{Numerical experiments}
In this section, we evaluate the numerical performance of Algorithm \ref{alg:asyn_bcd} on solving two problems: the LASSO problem and the nonnegative matrix factorization (NMF). The tests were carried out on a machine with 64GB of memory and two Intel Xeon E5-2690 v2 processors (20 cores, 40 threads). All of the experiments were coded in C++ and its threading library was used for parallelization. We use the Eigen library for numerical linear algebra operations. {To measure the delay, we use an atomic variable to track the number of iterations as defined in the paper. The atomic variable will be incremented by one for each update. For each thread, the delay is calculated based on the difference of the iteration counters before and after the update.} For LASSO, two different settings were used. The first one sets the stepsize by the expected delay according to the analysis of this paper, and the other one used the maximum delay from~\cite{liu2014asynchronous,liu2015async-scd} and is dubbed as AsySCD. We compared the async-BCU to the serial BCU, which can be regarded as a special case of Algorithm \ref{alg:asyn_bcd} with the delay $j_k\equiv 0,\,\forall k$. For NMF, we set the stepszie by the expected delay and test its convergence behavior with different numbers of threads.

\subsection{Parameter settings}
According to Theorem \ref{thm:convg-nsm}, the following two stepsizes were used:\footnote{For the NMF problem, $L_c$ cannot be determined in the beginning, so instead of using a uniform $L_c$, we used the gradient Lipschitz constant adaptive to the iterate.}
\begin{subequations}\label{eq:test-eta}
\begin{align}
\mbox{This paper}:\ \eta=\tfrac{1/L_c}{1+\kappa^2p^2/(2m)}\label{test-eta1},\\
\mbox{Max delay}:\ \eta=\tfrac{1/L_c}{1+\kappa^2\tau^2/(2m)},\label{test-eta2}
\end{align}
\end{subequations}
where $\tau$ equals the maximum number of the generated sequence of delays.

\subsection{LASSO}
We measure the performance of Algorithm \ref{alg:asyn_bcd} on the LASSO problem~\cite{tibshirani1996-Lasso}
\begin{equation}\label{eq:lasso}
\Min_{\vx\in\RR^n} \tfrac{1}{2}\|\vA\vx-\vb\|_2^2+\lambda\|\vx\|_1,
\end{equation}
where $\vA\in\RR^{N\times n}$, $\vb\in\RR^N$, and $\lambda$ is a parameter balancing the fitting term and the regularization term. We randomly generated $\vA$ and $\vb$ following the standard normal distribution. The size was fixed to $n = 2N$ and $N=10,000$, and $\lambda=\frac{1}{N}$ was used. The Lipschitz constant $L_c = \max\{ \|(\vA_i^T\vA_i)\|^2,  ~\forall i \}$, where $\vA_i$ represents the $i$th column block of $\vA$.

Figure~\ref{fig:lasso_delay_distribution} shows the delay distribution of Algorithm~\ref{alg:asyn_bcd} with different numbers of threads. The blue bars are the normalized histogram so that the bar heights add to 1. Orange curve is the probability density function of Poisson distribution. By using 5 and 10 threads, we observe that the number of delays is concentrated on 4, and 9 respectively. When the number of threads is relatively large, the actual delay distribution closely matches with the theoretical distribution as we discussed in Section~\ref{sec:poisson}. For 20 threads, an interesting observation is that, the actual probability density is higher than the theoretical probability density when the number of delays is around 9. We think this is due to the architecture of the testing environment, i.e., the average delay within a CPU is smaller than the average delay across two different CPUs. We observe a similar behavior when 40 threads are used.

Figure~\ref{fig:lasso} plots the convergence behavior of Algorithm \ref{alg:asyn_bcd} running on 40 threads with different block sizes. We partition $\vx$ into $m$ equal-sized blocks with block sizes varying among $\{10,~50,~100,~500\}$. The results of the serial randomized coordinate descent method is also plotted for comparison. Here, one epoch is equivalent to updating all coordinates once.  {Comparing to the serial method, we observe that the delay does affect the convergence speed, and the affect becomes weaker as $m$ increases. Hence, Algorithm \ref{alg:asyn_bcd} can have nearly linear speed-up when the number of blocks is large}. In addition, we note that the stepsize setting of AsySCD is too conservative, and Algorithm \ref{alg:asyn_bcd} with stepsize set by the expected delay converges significantly faster. However, we observed that, in general, we could not take larger stepsize than that in~\eqref{test-eta1}. Some divergence behaviors are observed when using stepsizes larger than that in~\eqref{test-eta1}.

\begin{figure}[!ht]
{\small\begin{tabular}{cc}
5 threads & 10 threads\\[-0.1cm]
\includegraphics[width=0.44\textwidth,height=0.4\textwidth]{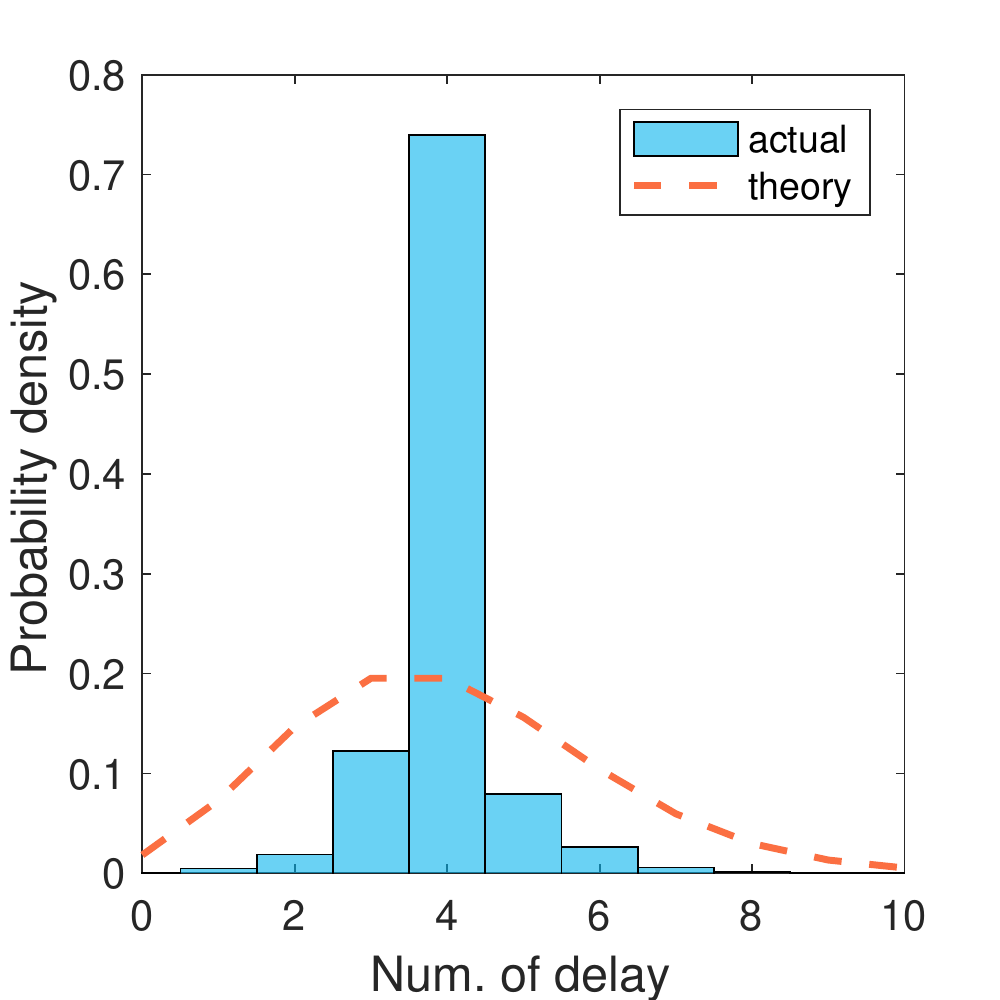}&
\includegraphics[width=0.44\textwidth,height=0.4\textwidth]{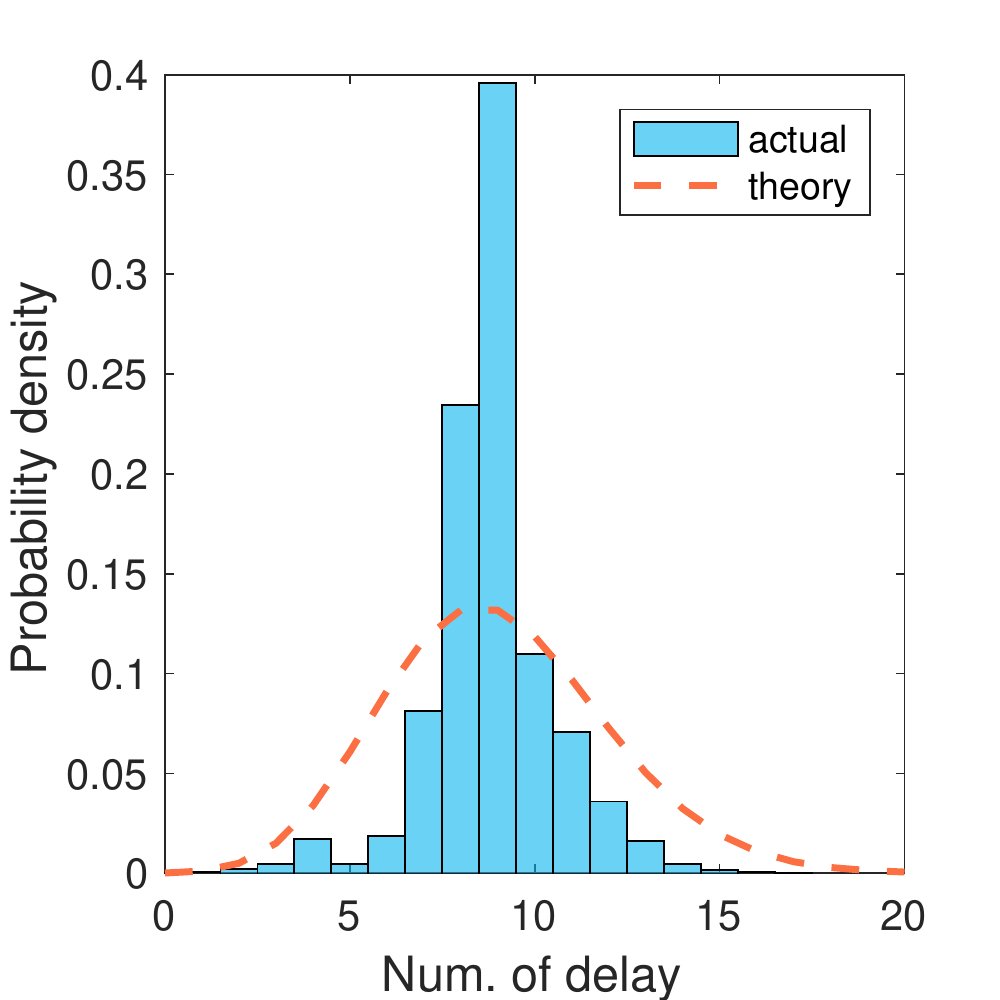}\\[0.1cm]
20 threads & 40 threads\\[-0.08cm]
\includegraphics[width=0.44\textwidth,height=0.4\textwidth]{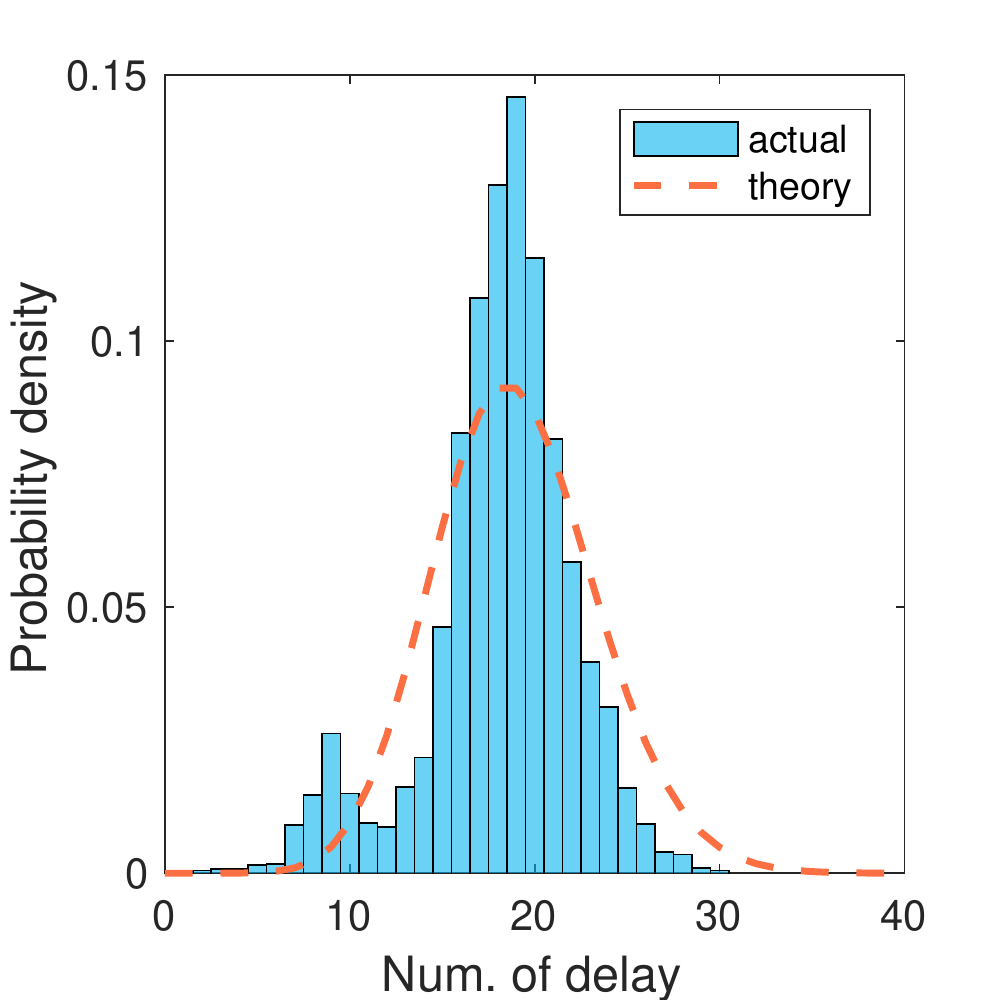}&
\includegraphics[width=0.44\textwidth,height=0.4\textwidth]{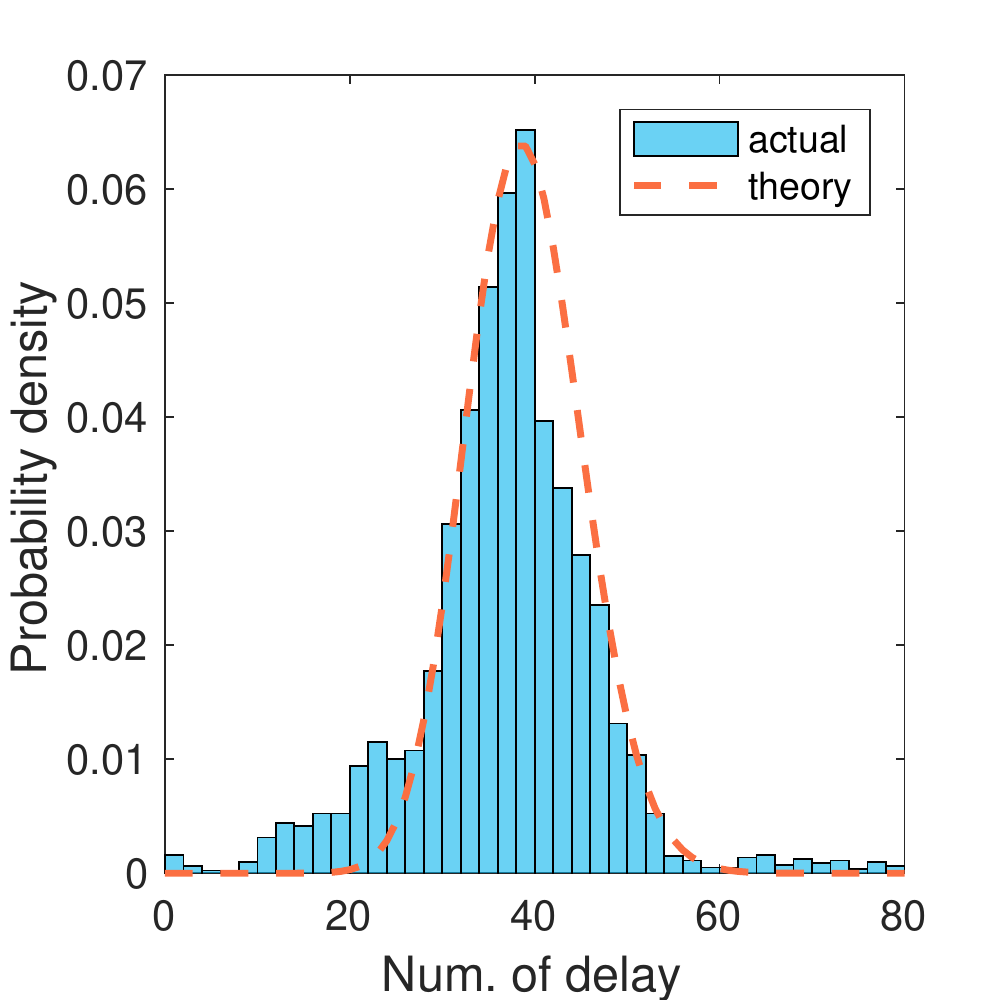}
\end{tabular}}
\caption{Delay distribution behaviors of Algorithm \ref{alg:asyn_bcd} for solving LASSO~\eqref{eq:lasso}. The tested problem has $20,000$ coordinates, and it was running with 5, 10, 20, and 40 threads.}
\label{fig:lasso_delay_distribution}
\end{figure}

\begin{figure}[!ht]
{\small \begin{tabular}{cccc}
2,000 blocks & 400 blocks & 200 blocks & 40 blocks\\[-0.08cm]
\includegraphics[width=0.22\textwidth,height=0.2\textwidth]{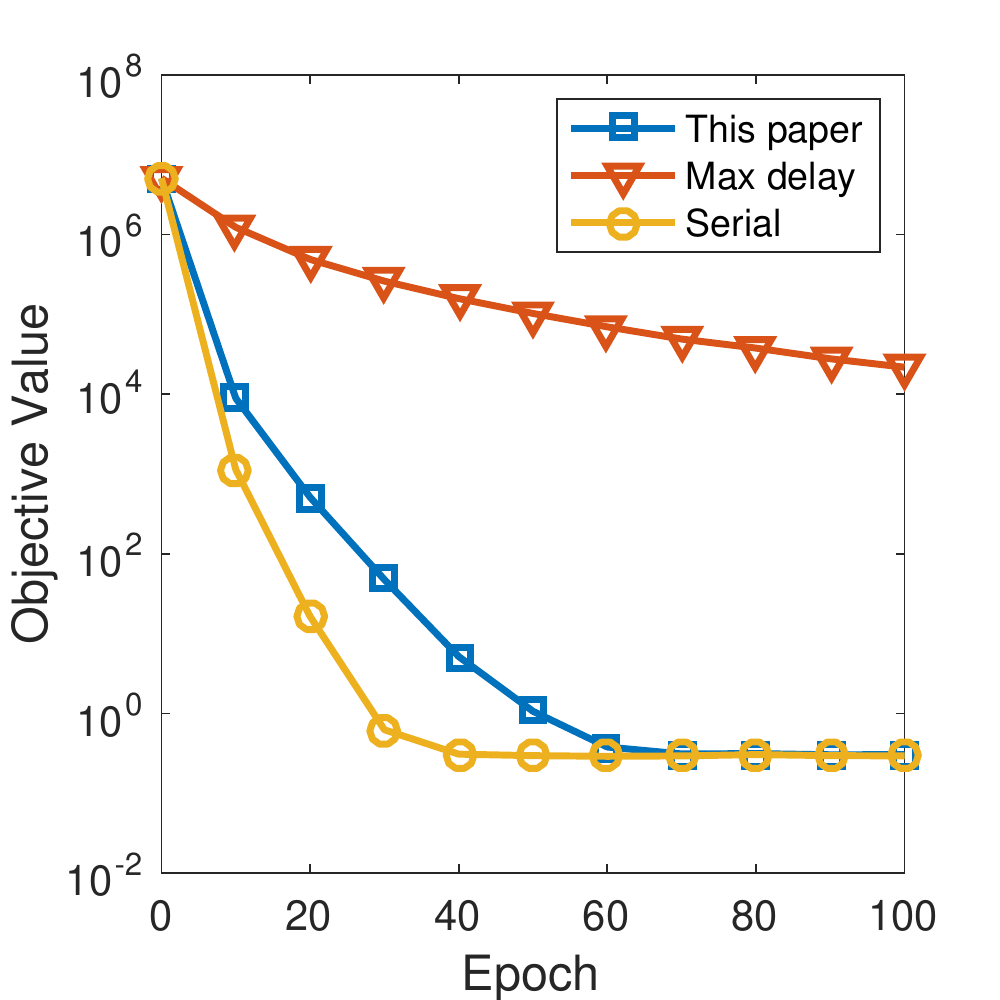}&
\includegraphics[width=0.22\textwidth,height=0.2\textwidth]{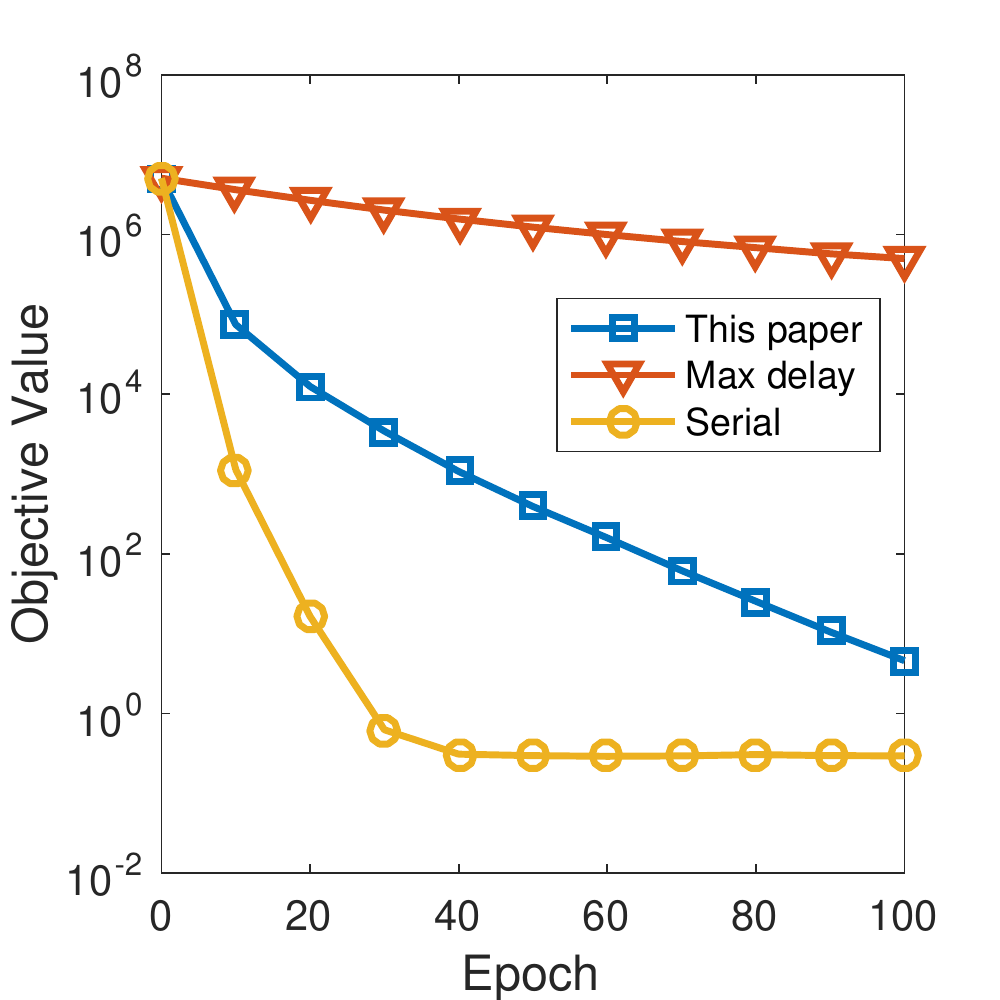} &
\includegraphics[width=0.22\textwidth,height=0.2\textwidth]{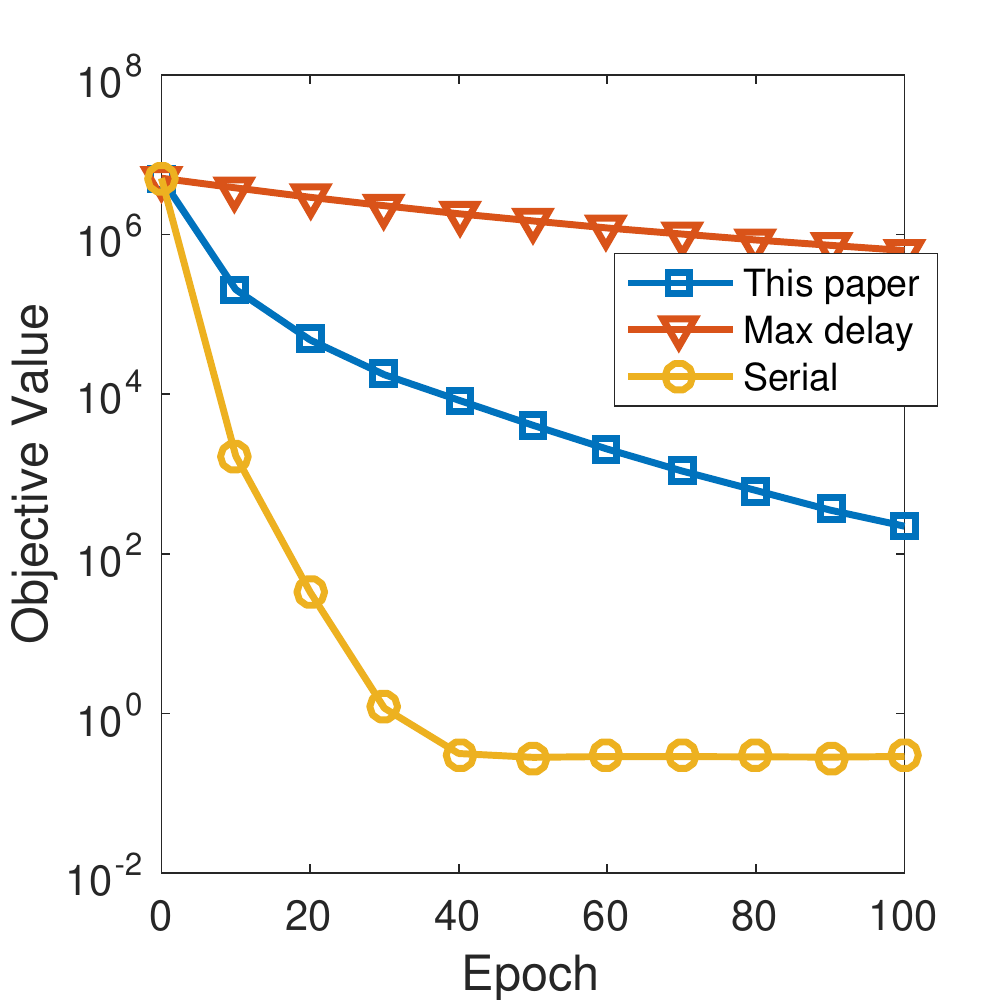}&
\includegraphics[width=0.22\textwidth,height=0.2\textwidth]{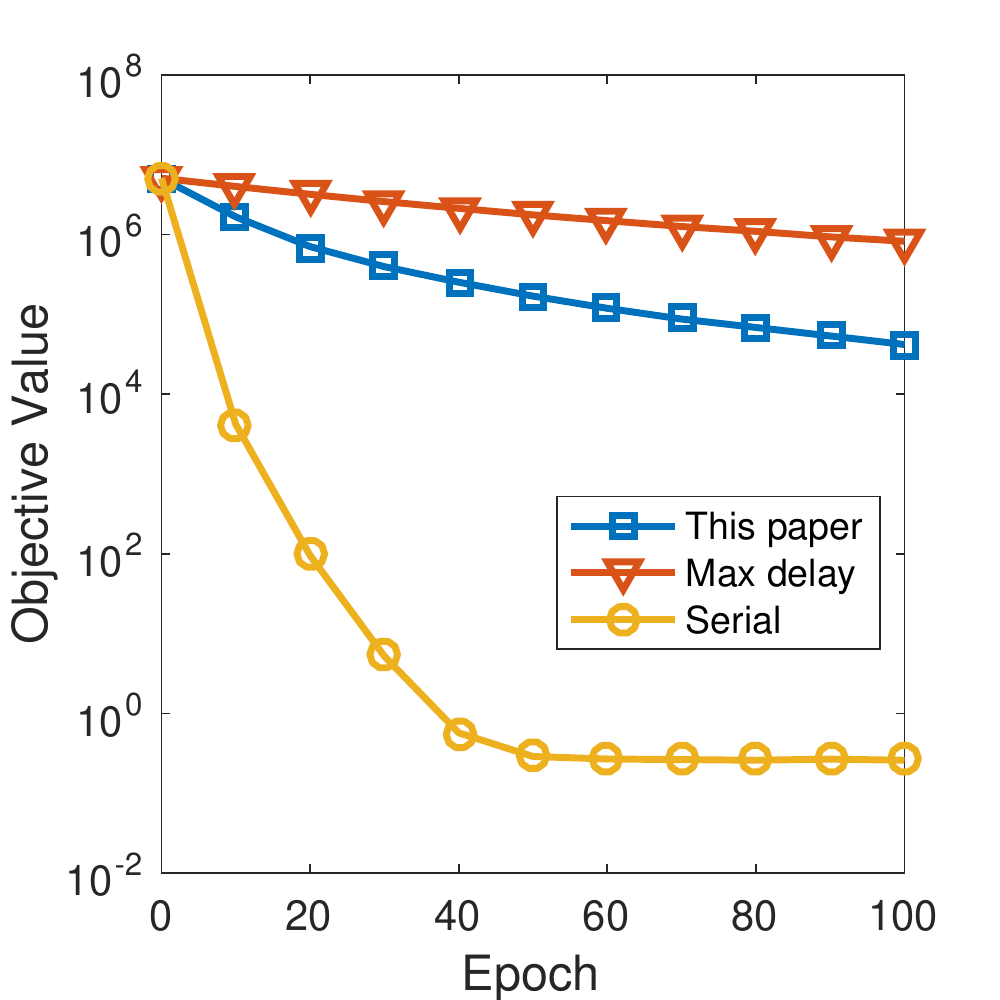}\\
\includegraphics[width=0.22\textwidth,height=0.2\textwidth]{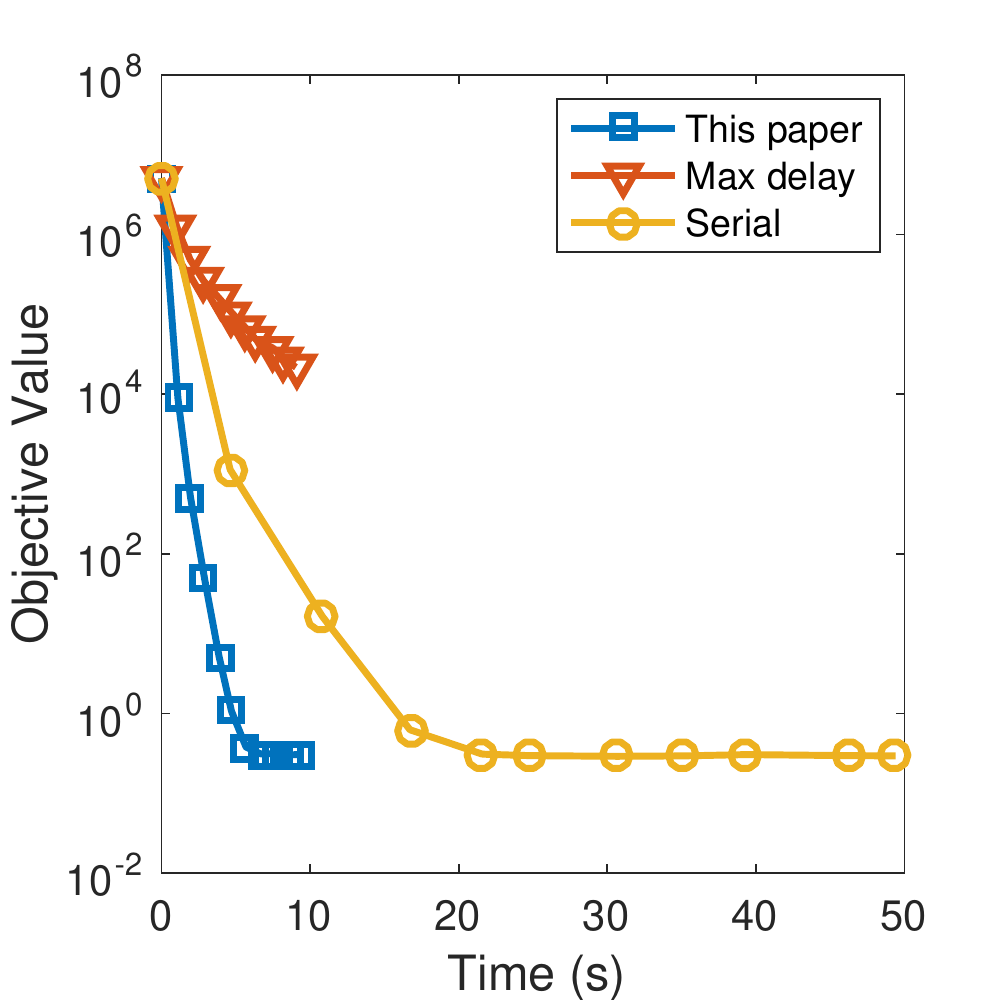}&
\includegraphics[width=0.22\textwidth,height=0.2\textwidth]{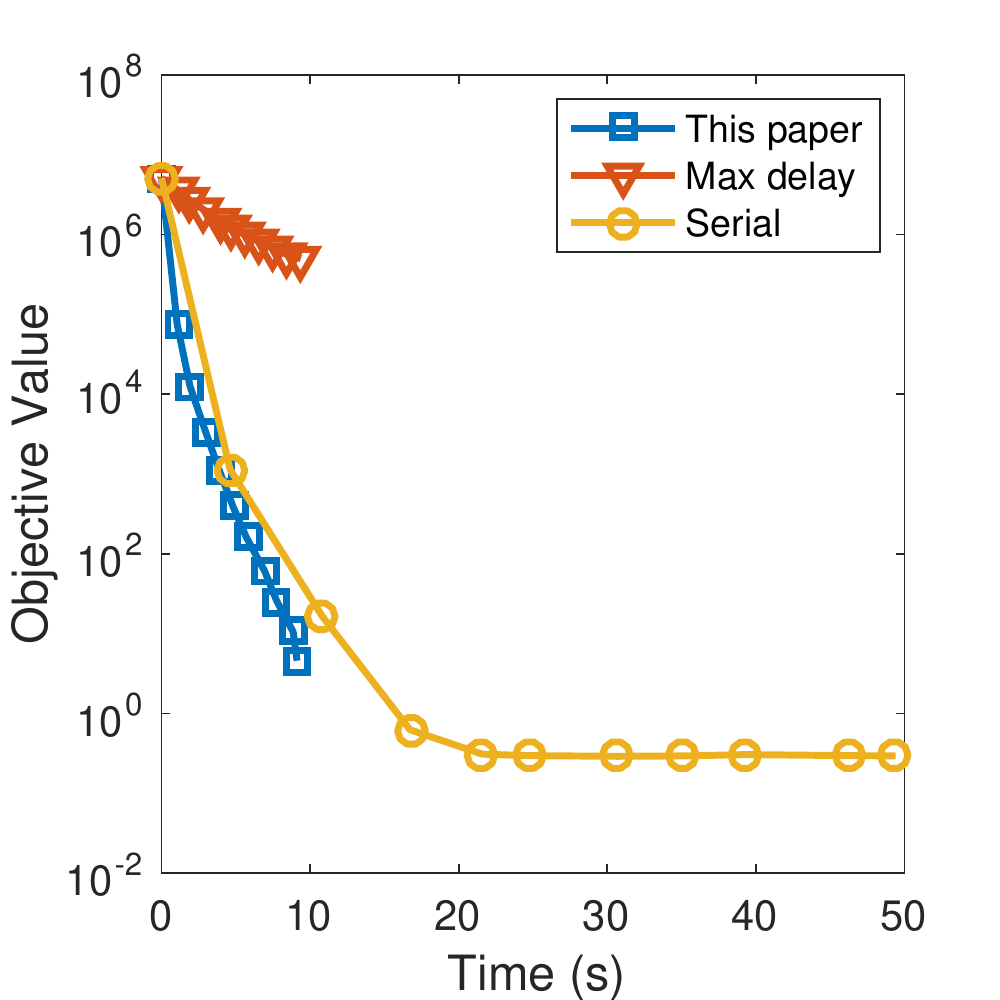} &
\includegraphics[width=0.22\textwidth,height=0.2\textwidth]{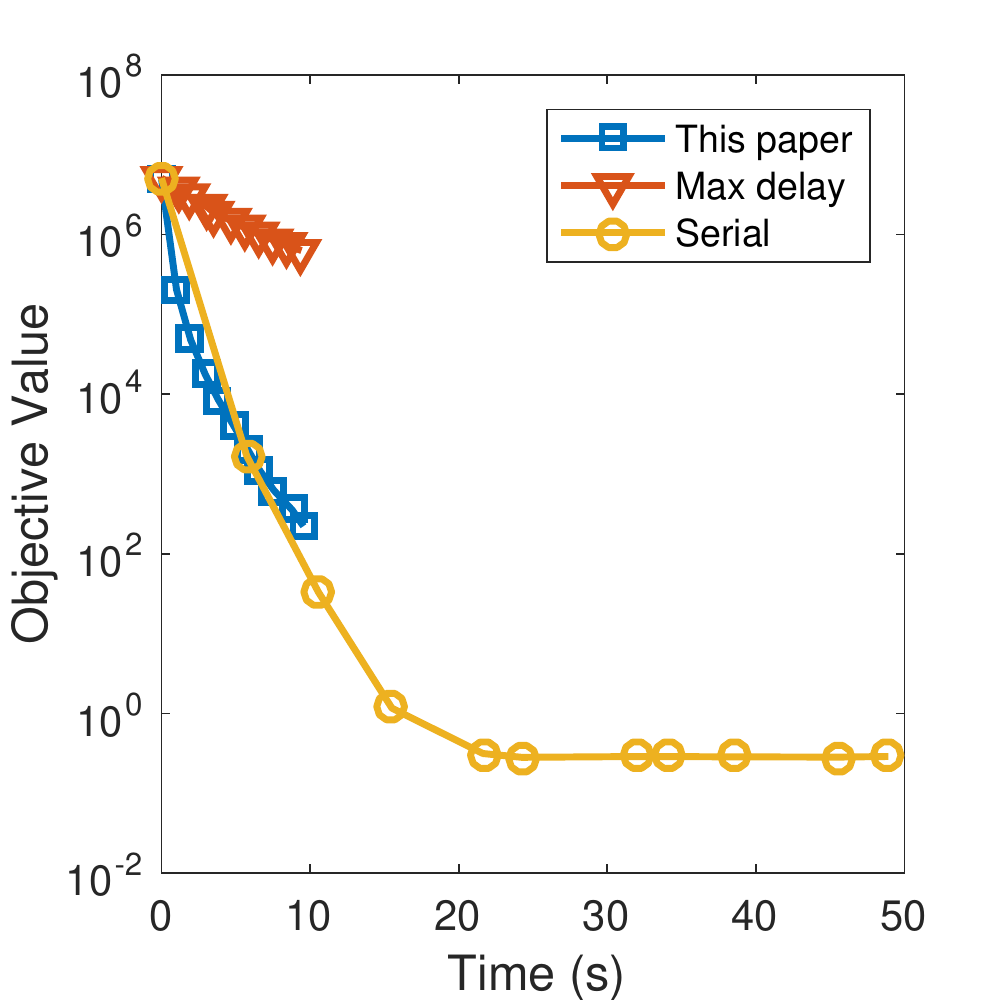}&
\includegraphics[width=0.22\textwidth,height=0.2\textwidth]{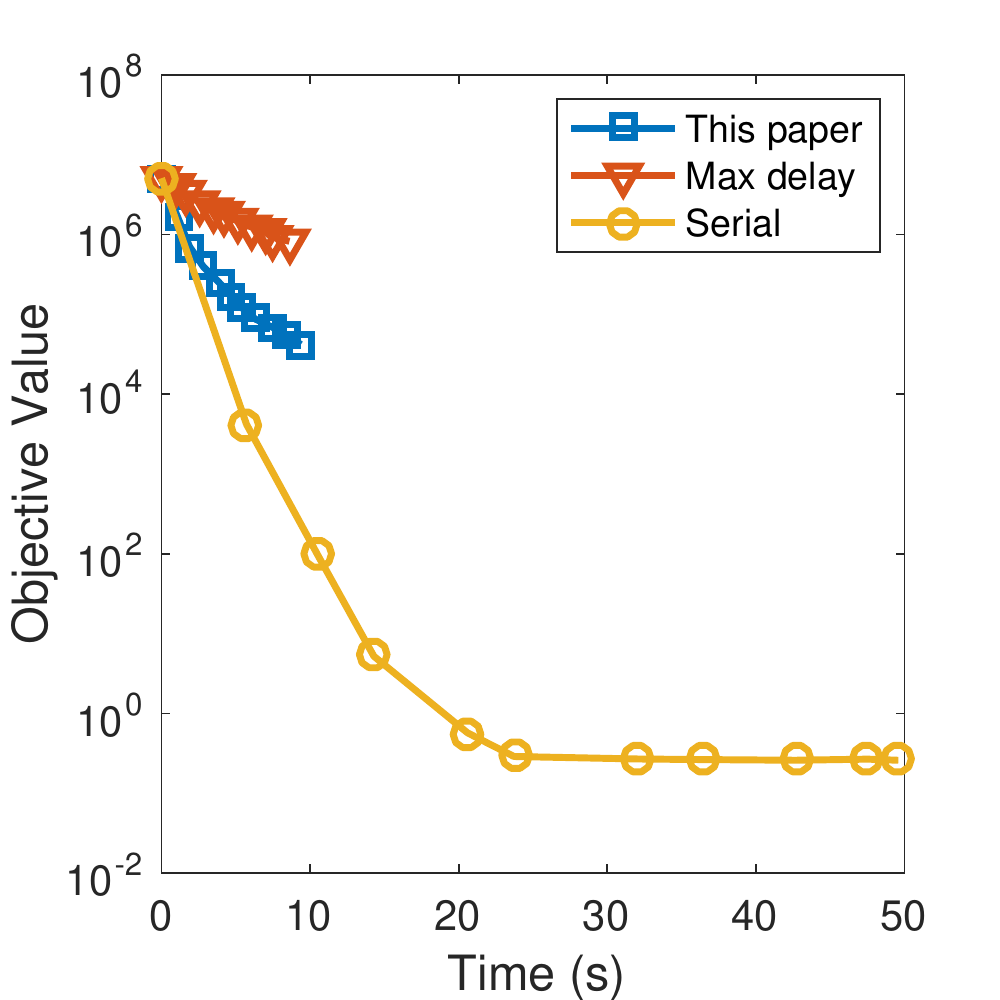}\\[0.1cm]
\end{tabular}}
\caption{ {Convergence behaviors of Algorithm \ref{alg:asyn_bcd} for solving the LASSO problem~\eqref{eq:lasso} with the stepsize given in~\eqref{eq:test-eta}, and also the serial randomized coordinate descent method. The tested problem has $10,000$ samples and $20,000$ coordinates that are evenly partitioned into $m$ blocks. It was simulated as running with 40 threads. We run 100 epochs for each experiments.}}
\label{fig:lasso}
\end{figure}

\subsection{Nonnegative matrix factorization (NMF)}
This section presents the numerical results of applying Algorithm \ref{alg:asyn_bcd} for solving the NMF problem~\cite{paatero1994-NMF}
\begin{equation}\label{eq:nmf}
\begin{array}{l}
\Min\limits_{\vX,\vY}\ \frac{1}{2}\|\vX\vY^\top-\vZ\|_F^2, \\[0.2cm]
\st \vX\in\RR^{M\times m}_+,\vY\in\RR^{N\times m}_+,
\end{array}
\end{equation}
where $\vZ\in\RR^{M\times N}_+$ is a given nonnegative matrix. We generated $\vZ=\vZ_L\vZ_R^\top$ with the elements of $\vZ_L$ and $\vZ_R$ first drawn from the standard normal distribution and then projected into the nonnegative orthant. The size was fixed to $M=N=10,000$ and $m=100$.

We treated one column of $\vX$ or $\vY$ as one block coordinate, and during the iterations, every column of $\vX$ was kept with unit norm. Therefore, the partial gradient Lipschitz constant equals \emph{one} if one column of $\vY$ is selected to update and $\|\vy_{i_k}^k\|_2^2$ if the $i_k$-th column of $\vX$ is selected. Since $\|\vy_{i_k}^k\|_2^2$ could approach to \emph{zero}, we set the Lipschitz constant to $\max(0.001,  \|\vy_{i_k}^k\|_2^2)$. This modification can guarantee the whole sequence convergence of the coordinate descent method~\cite{xu2014-ecyc}. Due to nonconvexity, global optimality cannot be guaranteed. Thus, we set the starting point close to $\vZ_L$ and $\vZ_R$. Specifically, we let $\vX^0=\vZ_L+0.5\bm{\Xi}_L$ and $\vY^0=\vZ_R+0.5\bm{\Xi}_R$ with the elements of $\bm{\Xi}_L$ and $\bm{\Xi}_R$ following the standard normal distribution. All methods used the same starting point.


Figure \ref{fig:nmf_delay_distribution} shows the delay distribution behavior of Algorithm~\ref{alg:asyn_bcd} for solving NMF. The observation is similar to Figure~\ref{fig:lasso_delay_distribution}. Figure \ref{fig:nmf} plots the convergence results of Algorithm~\ref{alg:asyn_bcd} running with $1,~5,~10,~20$ and $40$ threads. From the results, we observe that Algorithm~\ref{alg:asyn_bcd} scales up to 10 threads for the tested problem. Degenerated convergence is observed with 20 and 40 threads. This is mostly due to the following three reasons:  (1) since the number of blocks is relatively small ($m =200$), as shown in~\eqref{test-eta1}, using more threads leads to smaller stepsize, hence, slower convergence; (2) the gradient used for the current update is more staled when a relative large number of threads are used, which also leads to slow convergence; (3) high cache miss rates and false sharing also downgrade the speedup performance.
\begin{figure}[!ht]
{\small\begin{tabular}{cc}
5 threads & 10 threads\\[-0.08cm]
\includegraphics[width=0.44\textwidth,height=0.4\textwidth]{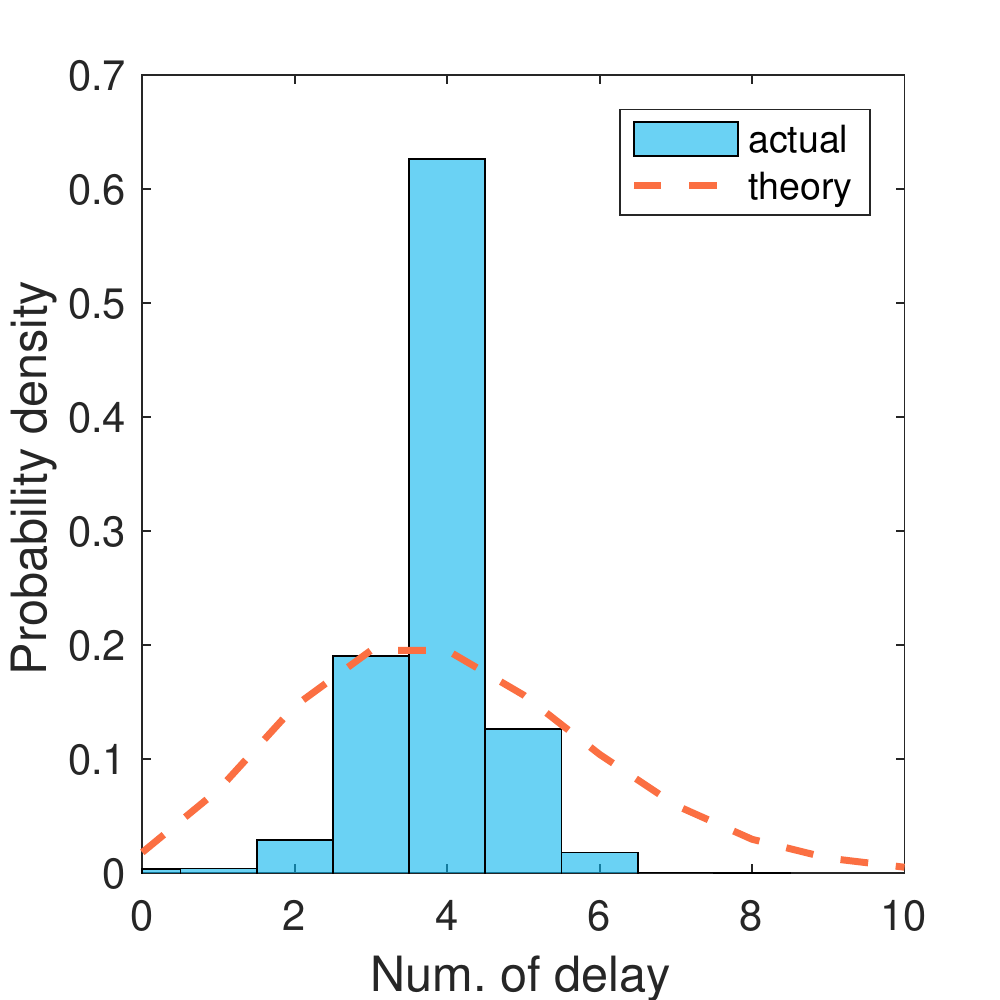}&
\includegraphics[width=0.44\textwidth,height=0.4\textwidth]{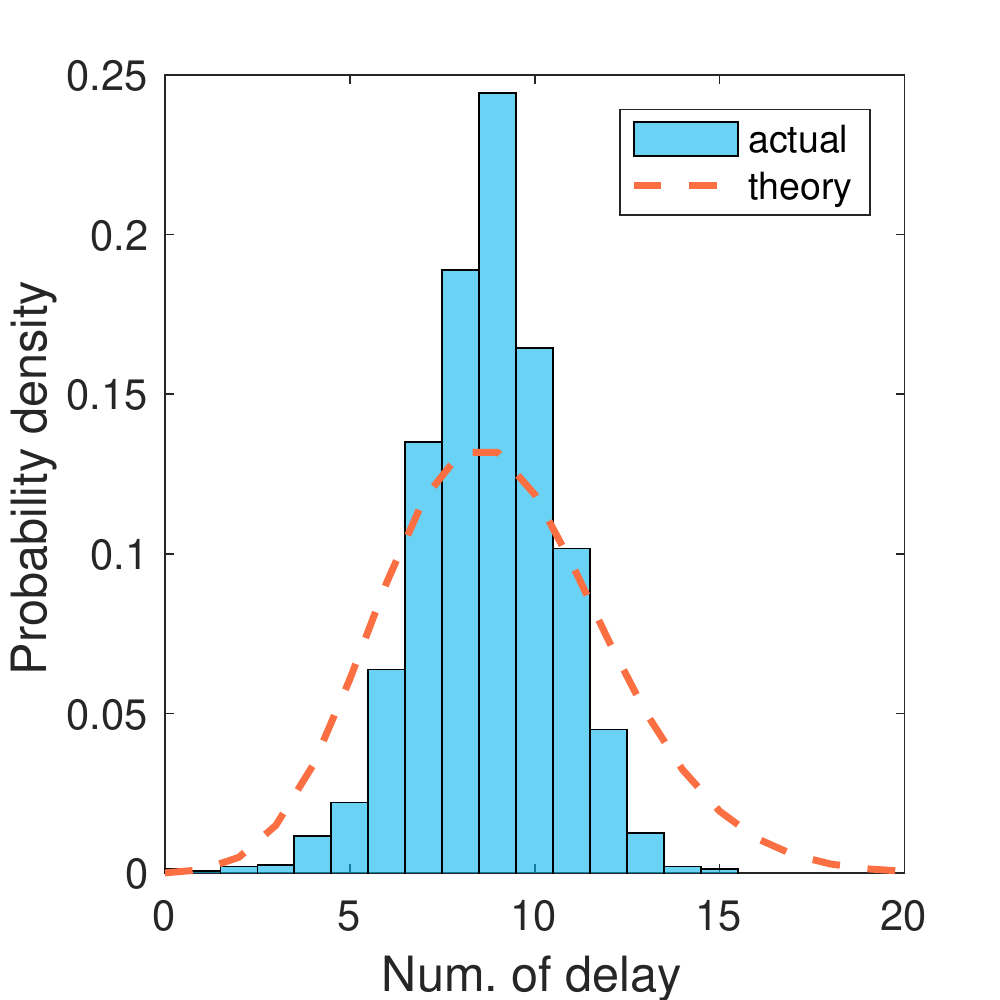}\\[0.1cm]
20 threads & 40 threads\\[-0.08cm]
\includegraphics[width=0.44\textwidth,height=0.4\textwidth]{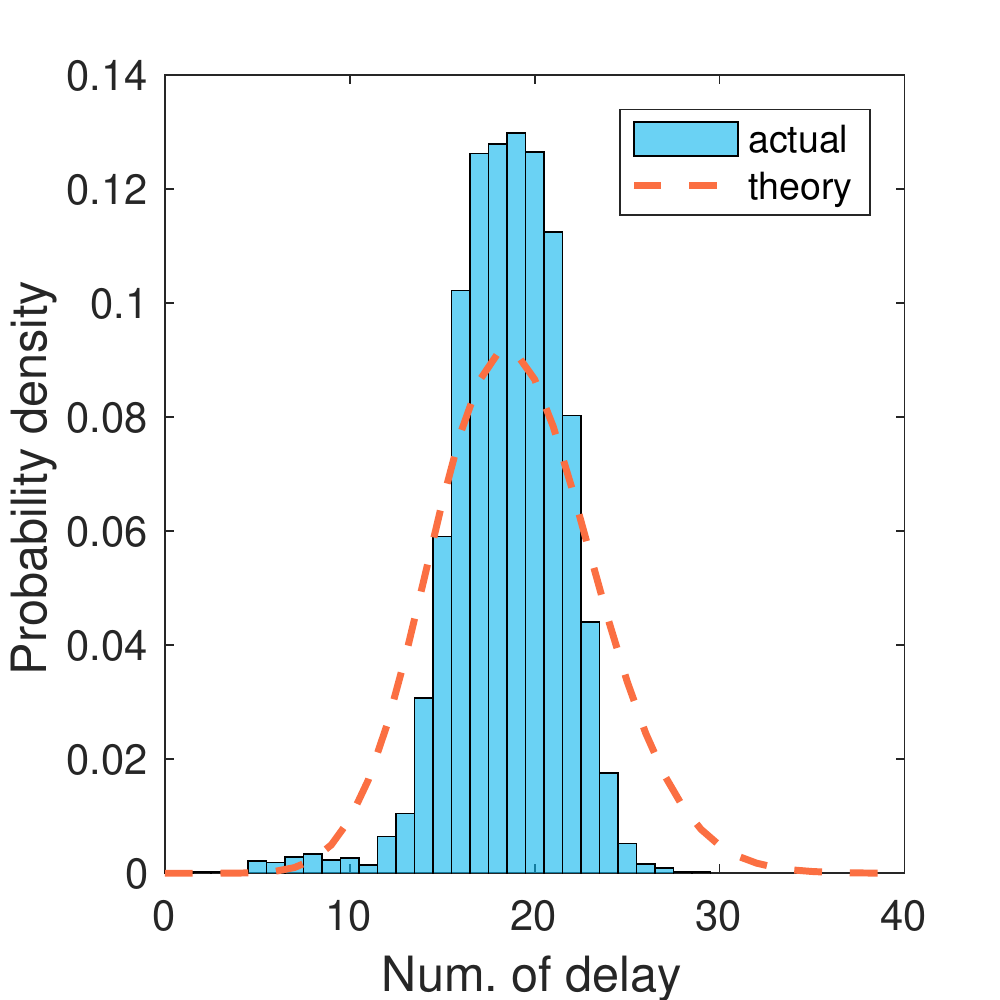}&
\includegraphics[width=0.44\textwidth,height=0.4\textwidth]{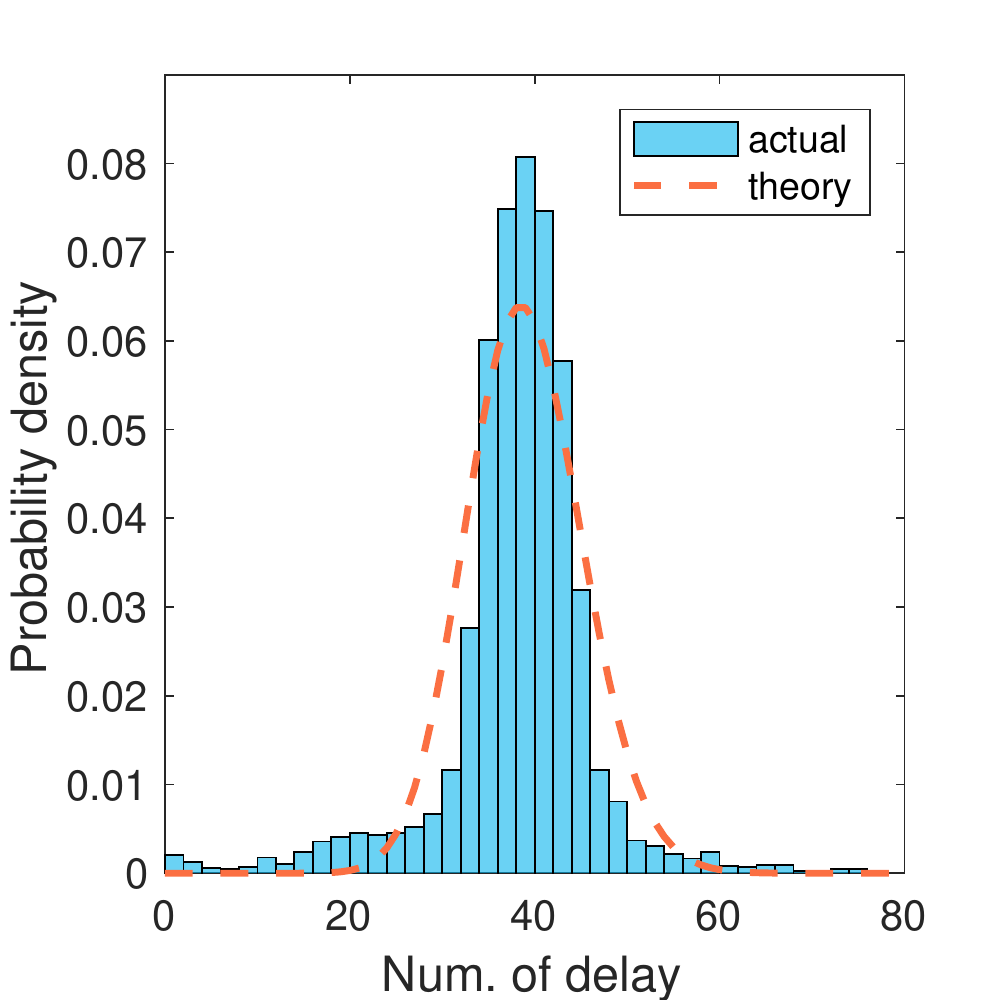}
\end{tabular}}
\caption{Delay distribution behaviors of Algorithm \ref{alg:asyn_bcd} for solving NMF~\eqref{eq:nmf}. It was running with 5, 10, 20, and 40 threads.}
\label{fig:nmf_delay_distribution}
\end{figure}
\begin{figure}[!ht]
{\small \begin{tabular}{cc}
\includegraphics[width=0.44\textwidth,height=0.4\textwidth]{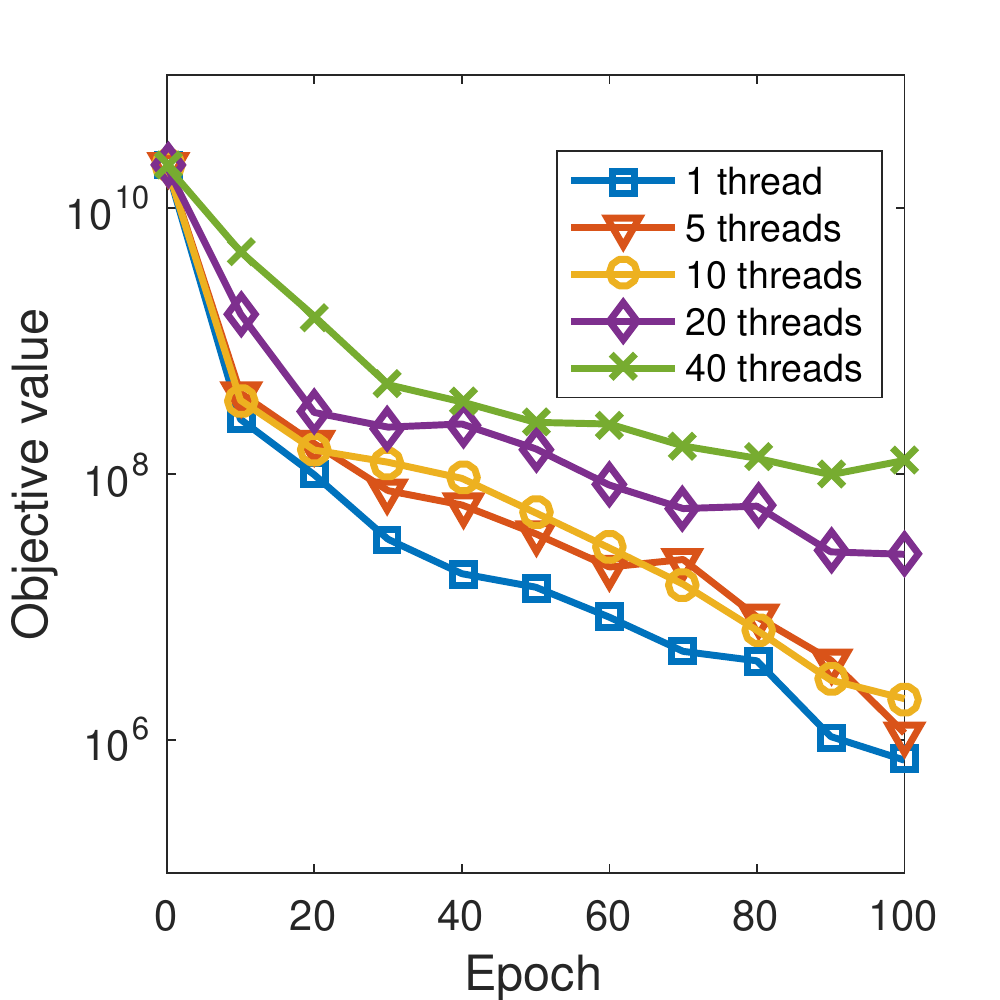}&
\includegraphics[width=0.44\textwidth,height=0.4\textwidth]{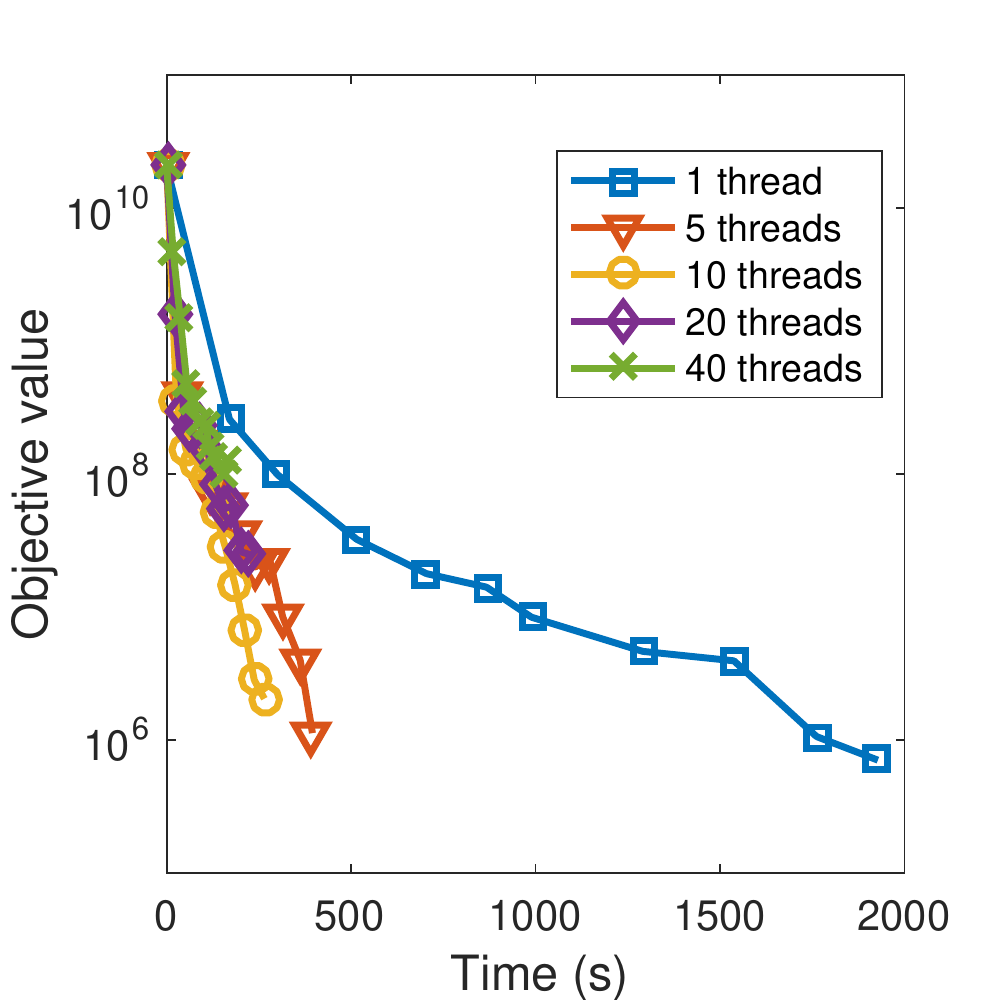}
\end{tabular}}
\caption{Convergence behaviors of Algorithm \ref{alg:asyn_bcd} for solving the NMF problem~\eqref{eq:nmf} with the stepsize set based on the expected delay. The size of the tested problem is $M=N=10,000$ and $m=100$, i.e., 200 block coordinates, and the algorithm was tested with 1, 5, 10, 20, and 40 threads.}
\label{fig:nmf}
\end{figure}

\section{Conclusions}
We have analyzed the convergence of the async-BCU method for solving both convex and nonconvex problems in a probabilistic way. We showed that the algorithm is guaranteed to converge for smooth problems if the expected delay is finite and for nonsmooth problems if the variance of the delay is also finite. In addition, we established sublinear convergence  of the method for weakly convex problems and linear convergence for strongly convex ones. The stepsize we obtained depends on certain expected quantities. Assuming the given $p+1$ processors perform identically, we showed that the delay follows a Poisson distribution with parameter $p$ and thus fully determined the stepsize. We have simulated the performance of the algorithm with our determined stepsize on solving LASSO and the nonnegative matrix factorization, and the numerical results validated our analysis.

\section*{Acknowledgements}

We would like to acknowledge support for this project
from the National Science Foundation (NSF EAGER ECCS-1462397, DMS-1621798, and DMS-1719549).


\bibliographystyle{spmpsci}
\bibliography{BGref,wyin}

\appendix

\section{Proofs of lemmas}
The following lemma is used in other proofs several times, and it is easy to  verify.
\begin{lemma}\label{lem:2seq}
For any scalar sequences $\{a_{i,j}\}$ and $\{b_i\}$, it holds that
\begin{align}\label{seq1}
\sum_{t=1}^{k-1}\sum_{d=k-t}^{k-1}a_{d,t}&=\sum_{d=1}^{k-1}\sum_{t=k-d}^{k-1}a_{d,t},\,\forall k \ge0,\\
\label{seq2}
\sum_{t=1}^k\sum_{d=0}^{t-1}a_{d,t}&=\sum_{d=0}^{k-1}\sum_{t=d+1}^ka_{d,t},\,\forall k\ge0.\\
\label{seq3}
\sum_{t=1}^k\sum_{d=1}^{t-1}a_{d,t}b_{t-d}&=\sum_{t=1}^{k-1}\Big(\sum_{d=t+1}^ka_{d-t,d}\Big)b_t,\,\forall k\ge0.
\end{align}
\end{lemma}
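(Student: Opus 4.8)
The plan is to establish each of the three identities in Lemma~\ref{lem:2seq} by the elementary device of rewriting the index set of a double sum and then interchanging the order of summation, with one extra substitution needed for~\eqref{seq3}. All sums with empty ranges (which happens whenever $k\le 1$) are read as zero, so every identity is trivially true in that case and I may assume $k\ge 2$.

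For~\eqref{seq1}, I would first describe the index set of the left-hand side as $\{(t,d):1\le t\le k-1,\ k-t\le d\le k-1\}$. The constraint $k-t\le d$ is equivalent to $t\ge k-d$; since $t\le k-1$ forces $k-t\ge 1$, every admissible $d$ obeys $1\le d\le k-1$, and conversely for each such $d$ the allowed values of $t$ are exactly $k-d\le t\le k-1$. Summing first over $t$ and then over $d$ therefore reproduces the right-hand side. Identity~\eqref{seq2} is the same argument: the left index set $\{(t,d):1\le t\le k,\ 0\le d\le t-1\}$ equals $\{(t,d):0\le d\le k-1,\ d+1\le t\le k\}$, using $d\le t-1\iff t\ge d+1$ together with the fact that $d$ ranges over $0,\dots,k-1$.

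For~\eqref{seq3} the summand carries the extra factor $b_{t-d}$, so a bare reindexing of $(t,d)$ will not do. I would substitute $s=t-d$ in the inner sum: as $d$ runs over $1,\dots,t-1$ so does $s$, and $a_{d,t}b_{t-d}=a_{t-s,t}b_s$, so the left-hand side becomes $\sum_{t=1}^{k}\sum_{s=1}^{t-1}a_{t-s,t}\,b_s$. Now apply the set-rewriting used for~\eqref{seq2} to the pair $(t,s)$: the domain $\{(t,s):1\le t\le k,\ 1\le s\le t-1\}$ equals $\{(t,s):1\le s\le k-1,\ s+1\le t\le k\}$, so summing over $t$ first gives $\sum_{s=1}^{k-1}\big(\sum_{t=s+1}^{k}a_{t-s,t}\big)b_s$; renaming the outer index $s\mapsto t$ and the inner index $t\mapsto d$ yields $\sum_{t=1}^{k-1}\big(\sum_{d=t+1}^{k}a_{d-t,d}\big)b_t$, the right-hand side.

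There is no genuine obstacle in this proof; the only point requiring care is the bookkeeping of the summation limits, in particular checking the ``conversely'' direction of each set equality so that no index pair is gained or lost, and verifying that the substitution $s=t-d$ is a bijection on the relevant ranges. I would present the identities in the order~\eqref{seq1},~\eqref{seq2},~\eqref{seq3}, reusing the computation for~\eqref{seq2} verbatim inside the proof of~\eqref{seq3}.
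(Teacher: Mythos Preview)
Your proposal is correct: each identity follows from describing the common index set of the double sum and then interchanging the order of summation, with the substitution $s=t-d$ handling the extra factor in~\eqref{seq3}. The paper itself does not give a proof beyond stating that the lemma ``is easy to verify,'' so your write-up is precisely the elementary verification the paper omits.
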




\subsection{Proof of Lemma~\ref{lem:bdgrad}}
\begin{proof}
Following the proof of Theorem 1 in~\cite{liu2014asynchronous}, we have
\begin{align}\label{eq:rel1}
    &\EE\big[\|\nabla f(\vx^t)\|^2-\|\nabla f(\vx^{t+1})\|^2\big]\cr
\le &2\EE\big[\|\nabla f(\vx^t)\|\cdot\|\nabla f(\vx^t)-\nabla f(\vx^{t+1})\|\big]\quad(\text{from }\|\vu\|^2-\|\vv\|^2\le 2\|\vu\|\cdot\|\vu-\vv\|)\cr
\le &2L_r\EE\big[\|\nabla f(\vx^t)\|\cdot\|\vx^t- \vx^{t+1}\|\big]=2\eta L_r\EE\big[\|\nabla f(\vx^t)\|\cdot\|U_{i_t}\nabla f(\vx^{t-j_t})\|\big]\cr
\le &\eta L_r\left(\frac{1}{\sqrt{m}}\EE\|\nabla f(\vx^t)\|^2+\sqrt{m}\EE\|U_{i_t}\nabla f(\vx^{t-j_t})\|^2\right)\cr
=   &\frac{\eta L_r}{\sqrt{m}}\left(\EE\|\nabla f(\vx^t)\|^2+\EE\|\nabla f(\vx^{t-j_t})\|^2\right)\cr
=   &\frac{\eta L_r}{\sqrt{m}}\Big(\EE\|\nabla f(\vx^t)\|^2+\sum\limits_{r=0}^{t-1}q_r\EE\|\nabla f(\vx^{t-r})\|^2+c_t\|\nabla f(\vx^0)\|^2\Big)
\end{align}
and
\begin{align}\label{eq:rel2}
    &\EE\big[\|\nabla f(\vx^{t+1})\|^2-\|\nabla f(\vx^t)\|^2\big]\cr
\le &\EE\big[\|\nabla f(\vx^{t+1})+\nabla f(\vx^t)\|\cdot\|\nabla f(\vx^{t+1})-\nabla f(\vx^t)\|\big]\cr
\le &L_r\EE\left[\big(2\|\nabla f(\vx^t)\|+\|\nabla f(\vx^{t+1})-\nabla f(\vx^t)\|\big)\|\vx^{t+1}- \vx^t\|\right]\cr
\le &L_r\EE\left[2\|\nabla f(\vx^t)\|\cdot\|\vx^{t+1}- \vx^t\|+L_r\|\vx^{t+1}- \vx^t\|^2\right]\cr
=   &L_r\EE\left[2\eta\|\nabla f(\vx^t)\|\cdot\|U_{i_t}\nabla f(\vx^{t-j_t})\|+\eta^2 L_r\|U_{i_t}\nabla f(\vx^{t-j_t})\|^2\right]\cr
\le &L_r\EE\left[\frac{\eta}{\sqrt{m}}\|\nabla f(\vx^t)\|^2+\eta\sqrt{m}\|U_{i_t}\nabla f(\vx^{t-j_t})\|^2+\eta^2 L_r\|U_{i_t}\nabla f(\vx^{t-j_t})\|^2\right]\cr
=   &\frac{\eta L_r}{\sqrt{m}}\EE\|\nabla f(\vx^t)\|^2+\left(\frac{\eta L_r}{\sqrt{m}}+\frac{\eta^2 L_r^2}{m}\right)\EE\|\nabla f(\vx^{t-j_t})\|^2\cr
=   &\frac{\eta L_r}{\sqrt{m}}\EE\|\nabla f(\vx^t)\|^2+\left(\frac{\eta L_r}{\sqrt{m}}+\frac{\eta^2 L_r^2}{m}\right)\left(\sum\limits_{r=0}^{t-1}q_r\EE\|\nabla f(\vx^{t-r})\|^2+c_t\|\nabla f(\vx^0)\|^2\right).
\end{align}
We first show the first inequality in~\eqref{eq:bdgrad}. Note that~\eqref{sm-eta-rate} gives us
\begin{equation}\label{sm-eta-rate2}
\textstyle \frac{1}{1-(1+M_\rho)\frac{\eta L_r}{\sqrt{m}}}\le \rho.
\end{equation}
When $t=0$, we have from~\eqref{eq:rel1} that
$\textstyle \|\nabla f(\vx^0)\|^2-\EE\|\nabla f(\vx^1)\|^2\le \frac{2 \eta L_r}{\sqrt{m}}\|\nabla f(\vx^0)\|^2\le (1+M_\rho)\frac{\eta L_r}{\sqrt{m}}\|\nabla f(\vx^0)\|^2.$
Hence, $\|\nabla f(\vx^0)\|^2\le \rho \EE\|\nabla f(\vx^1)\|^2$ from~\eqref{sm-eta-rate2}. Now we assume that $\EE\|\nabla f(\vx^t)\|^2\le \rho\EE\|\nabla f(\vx^{t+1})\|^2$ for all $t\le k-1$. For $t=k$, it holds from~\eqref{eq:rel1} and the induction assumption that
\begin{align*}
    &\EE\|\nabla f(\vx^k)\|^2-\EE\|\nabla f(\vx^{k+1})\|^2\cr
\le &\frac{\eta L_r}{\sqrt{m}}\left(\EE\|\nabla f(\vx^k)\|^2+\sum\limits_{t=0}^{k-1}q_t\rho^t\EE\|\nabla f(\vx^k)\|^2+c_k\rho^k\EE\|\nabla f(\vx^k)\|^2\right)\\
=   &\frac{\eta L_r}{\sqrt{m}}\left(1+\sum\limits_{t=0}^{k-1}q_t\rho^t+c_k\rho^k\right)\EE\|\nabla f(\vx^k)\|^2
\le \frac{\eta L_r}{\sqrt{m}}(1+M_\rho)\cdot \EE\|\nabla f(\vx^k)\|^2.
\end{align*}
Hence, we have $\EE\|\nabla f(\vx^k)\|^2\le \rho\EE\|\nabla f(\vx^{k+1})\|^2$ from~\eqref{sm-eta-rate2}. Therefore, we finish the induction step, and thus the first inequality of~\eqref{eq:bdgrad} holds.

Next we show the second inequality of~\eqref{eq:bdgrad}. Since~\eqref{sm-eta-rate} implies $\textstyle \eta\le \frac{\rho-1}{\frac{L_r}{\sqrt{m}}\left(1+M_\rho+\frac{(\rho-1)M_\rho}{\rho(1+M_\rho)}\right)}$, 
\begin{align}\label{sm-eta-rate3}
1+\frac{\eta L_r}{\sqrt{m}}+\left(\frac{\eta L_r}{\sqrt{m}}+\frac{\eta^2 L_r^2}{m}\right)M_\rho\overset{(\ref{sm-eta-rate})}
\le &1+\frac{\eta L_r}{\sqrt{m}}(1+M_\rho)+M_\rho \frac{\eta L_r^2}{m}\frac{(\rho-1)\sqrt{m}}{\rho L_r(1+M_\rho)}\cr
=   &1+\frac{\eta L_r}{\sqrt{m}}\left(1+M_\rho+\frac{(\rho-1)M_\rho}{\rho(1+M_\rho)}\right)\le\rho.
\end{align}
When $t=0$, we  have from~\eqref{eq:rel2} that
\begin{align*}
\EE\|\nabla f(\vx^1)\|^2-\|\nabla f(\vx^0)\|^2
\le &\big(\frac{2\eta L_r}{\sqrt{m}}+\frac{\eta^2 L_r^2}{m}\big)\|\nabla f(\vx^0)\|^2\\
\le &\left((1+M_\rho)\frac{\eta L_r}{\sqrt{m}}+\frac{\eta^2 L_r^2}{m}\right)\|\nabla f(\vx^0)\|^2.
\end{align*}
Hence, $\EE\|\nabla f(\vx^1)\|^2\le \rho \|\nabla f(\vx^0)\|^2$ holds from~\eqref{sm-eta-rate3}. Assume $\EE\|\nabla f(\vx^{t+1})\|^2\le \rho\EE\|\nabla f(\vx^t)\|^2$ for all $t\le k-1$. It follows from~\eqref{eq:rel2} and the induction assumption that
\begin{align*}
    &\EE\|\nabla f(\vx^{k+1})\|^2-\EE\|\nabla f(\vx^{k})\|^2\\
\le &\frac{\eta L_r}{\sqrt{m}} \EE\|\nabla f(\vx^{k})\|^2 +\left(\frac{\eta L_r}{\sqrt{m}}+\frac{\eta^2 L_r^2}{m}\right)\left(\sum\limits_{t=0}^{k-1}q_t\rho^t\EE\|\nabla f(\vx^k)\|^2+c_k\rho^k\EE\|\nabla f(\vx^k)\|^2\right)\\
=   &\left(\frac{\eta L_r}{\sqrt{m}}+\left(\frac{\eta L_r}{\sqrt{m}}+\frac{\eta^2 L_r^2}{m}\right)\left(\sum\limits_{t=0}^{k-1}q_t\rho^t+c_k\rho^k\right)\right)\EE\|\nabla f(\vx^{k})\|^2\\
\le &\left(\frac{\eta L_r}{\sqrt{m}}+\left(\frac{\eta L_r}{\sqrt{m}}+\frac{\eta^2 L_r^2}{m}\right)M_\rho\right)\EE\|\nabla f(\vx^{k})\|^2.
\end{align*}
Hence, from~\eqref{sm-eta-rate3}, $\EE\|\nabla f(\vx^{k+1})\|^2\le \rho\EE\|\nabla f(\vx^k)\|^2$ holds, and we complete the proof.
\qed\end{proof}









%
%
%

\end{document}